\newtheorem{theorem}{Theorem}[section]
\newtheorem{lemma}[theorem]{Lemma}
\newtheorem{proposition}[theorem]{Proposition}
\newtheorem{corollary}[theorem]{Corollary}
\theoremstyle{definition}
\newtheorem{example}[theorem]{Example}
\newtheorem{remark}[theorem]{Remark}
\newcommand{\Z}{{\mathbb Z}}
\def\eps{\varepsilon}
\def\PP{\mathbb{P}}
\def\EE{\mathbb{E}}
\def\wt{\textup{wt}}
\begin{document}

\title[Large deviations and one-sided scaling limit of multicolor BBS]{Large deviations and one-sided scaling limit \\ of randomized multicolor box-ball system}

\author{Atsuo Kuniba}
\address{Atsuo Kuniba, Institute of Physics, 
University of Tokyo, Komaba, Tokyo 153-8902, Japan}
\email{\texttt{atsuo.s.kuniba@gmail.com}}

\author{Hanbaek Lyu}
\address{Hanbaek Lyu, Department of Mathematics, 
University of California, Los Angeles, CA 90095, USA}
\email{\texttt{colourgraph@gmail.com}}

\maketitle

\vspace{0.5cm}
\begin{center}{\bf Abstract}
\end{center}

The basic $\kappa$-color box-ball (BBS) system is an integrable 
cellular automaton on one dimensional lattice 
whose local states take $\{0,1,\cdots,\kappa \}$ with $0$ regarded as an empty box. 
The time evolution is defined by a combinatorial rule of quantum group theoretical origin, 
and the complete set of conserved quantities
is given by a $\kappa$-tuple of Young diagrams. 
In the randomized BBS, a probability distribution on 
$\{0,1,\cdots,\kappa \}$ to independently fill the 
consecutive $n$ sites in the initial state induces a highly 
nontrivial probability measure on the $\kappa$-tuple of 
those invariant Young diagrams.
In a recent work \cite{kuniba2018randomized}, 
their large $n$ `equilibrium shape' 
has been determined in terms of Schur polynomials  
by a Markov chain method and also by a very different approach of 
Thermodynamic Bethe Ansatz (TBA).
In this paper, we establish a large deviations principle 
for the row lengths of the invariant Young diagrams. 
As a corollary, 
they are shown to converge almost surely to the equilibrium shape 
at an exponential rate. We also refine the TBA analysis and
obtain the exact scaling form of the vacancy, the row length and the 
column multiplicity, which exhibit nontrivial factorization 
in a one-parameter specialization. 
\vspace{0.4cm}

\section{Introduction}
\label{Introduction}

\subsection{The basic $\kappa$-color BBS}
\label{subsection:intro1}

The \textit{basic $\kappa$-color box-ball system} (BBS) is a cellular automaton on the half-integer lattice $\mathbb{N}$. At each discrete time $t\ge 0$, the system configuration is given by a coloring $X_{t}:\mathbb{N}\rightarrow \{0,1,\cdots,\kappa \}$ with finite support. When $X_{t}(x)=i$, we say that site $x$ is \textit{unoccupied} at time $t$ if $i=0$, and \textit{occupied with a ball of color $i$} at time $t$ if $1\le i \le \kappa$. To define the time evolution rule, for each $1\le a \le \kappa$, let $K_{a}$ be the operator on the set $\{0,1,\cdots,\kappa \}^{\mathbb{N}}$ of all $(\kappa+1)$-colorings on $\mathbb{N}$ defined as follows:
\begin{description}
	\item{(i)} Label the balls of color $a$ from left to right as $a_{1},a_{2},\cdots,a_{m}$. 
	\item{(ii)} Starting from $k=1$ to $m$, move ball $a_{k}$ to the leftmost unoccupied site to its right. 
\end{description}  
Now the time evolution $(X_{t})_{t\ge 0}$ of the basic $\kappa$-color BBS is given by 
\begin{equation}\label{eq:BBS_def_nonlocal}
X_{t+1} =  K_{1} \circ K_{2}\circ  \cdots \circ K_{\kappa}(X_{t}) \quad \forall t\ge 0.
\end{equation}
A typical 4-color BBS trajectory is shown below.
\begin{eqnarray*}
	t=0: && 1 1 2 1 4 0 1 0 1 2 1 4 2 0 4 4 2 0 1 2 0 0 0 0 0 0 0 0 0 0 0 0 0 0 0	0 0 0 0 0 0 0 0 0 0 0 0 0\\
	t=1: && 0 0 1 0 2 4 0 1 0 1 0 2 1 4 2 1 1 4 0 1 4 2 2 0 0 0 0 0 0 0 0 0 0 0 0 0 0 0	0 0 0 0 0 0 0 0 0 0   \\
	t=2: && 0 0 0 1 0 2 4 0 1 0 1 0 0 2 1 0 0 2 4 0 1 1 1 4 4 2 2 0 0 0 0 0 0 0 0 0 0 0 0 0 0 0	0 0 0 0 0 0\\
	t=3: && 0 0 0 0 1 0 2 4 0 1 0 1 0 0 0 2 1 0 2 4 0 0 0 1 1 1 0 4 4 2 2 0 0 0 0 0 0 0 0 0 0 0 0 0 0 0	0 0 \\
	t=4: && 0 0 0 0 0 1 0 2 4 0 1 0 1 0 0 0 0 2 1 2 4 0 0 0 0 0 1 1 1 0 0 4 4 2 2 0 0 0 0 0 0 0 0 0 0 0 0 0\\
	t=5: && 0 0 0 0 0 0 1 0 2 4 0 1 0 1 0 0 0 0 0 1 2 4 2 0 0 0 0 0 0 1 1 1 0 0 0 4 4 2 2 0 0 0 0 0 0 0 0 0\\
	t=6: && 0 0 0 0 0 0 0 1 0 2 4 0 1 0 1 0 0 0 0 0 1 2 0 4 2 0 0 0 0 0 0 0 1 1 1 0 0 0 0 4 4 2 2 0 0 0 0 0
\end{eqnarray*}

Observe that if there is a string of length $k$ consecutive balls of 
non-increasing (to the right) colors, then each of the balls moves at least distance $k$ in one iteration of the update rule. Suppose there is a such sequence of length $k_{1}$, followed by a sequence of $m$ 0's, and then another sequence of length $k_{2}$ of non-increasing colors. Also note that if $k_{1}\le m \le k_{2}$, then the two sequences do not interact in one iteration and the spacing becomes $m+(k_{2}-k_{1})\ge m$. On the other hand, if $k_{1}>k_{2}$, then the longer sequence on the left eventually catches up the shorter one and interfere. Since there are only finitely many balls in the initial configuration $X_{0}$, it is not hard to observe that after a finite number of iterations, the system decomposes into a disjoint non-interacting sequence of balls of non-increasing colors, whose lengths are non-decreasing from left to right. Each of such non-interacting sequence of balls is called a \textit{soliton}, and such decomposition is called a \textit{soliton decomposition} of the system $(X_{t})_{t\ge 0}$.

Given a basic $\kappa$-color BBS configuration $X_{0}:\mathbb{N}\rightarrow  \{0,1,\cdots,\kappa \}$, its soliton decomposition may be encoded in a Young tableau whose $j^{\text{th}}$ column corresponds to $j^{\text{th}}$ longest soliton. For instance, below is the Young tableau corresponding to the soliton decomposition of the $t=6$  instance of the 4-color BBS given before:  
\begin{equation} 
\young(2122111421,214,41,4)
\vspace{0.1cm}
\end{equation}

For $\kappa=1$, such a Young tableau is filled with all 1's, so its shape contains all relevant conserved quantities. For $\kappa>1$, however,  solitons possess not only their length but also their internal degrees of freedom. It turns out that we need a $\kappa$-tuple of Young diagrams $(\mu^{(1)}, \mu^{(2)},\cdots, \mu^{(\kappa)})$ to fully describe all such conserved quantities, where $\mu^{(1)}$ encodes the lengths of solitons and the other `higher order' Young diagrams describe their internal degrees of freedom. 
They provide a proper label of {\em iso-level sets} of the basic $\kappa$-color BBS \cite{kuniba2006crystal}. For our running example of 4-color BBS, the four invariant Young diagrams are   
\begin{equation}\label{eq:invariant_YD_ex}
{\normalsize\mu^{(1)}={\tiny\yng(10,3,2,1)},\quad 
\mu^{(2)}={\tiny\yng(5,2,1,1)},\quad \mu^{(3)}= \tiny{\yng(2,1,1)}, \quad 
\mu^{(4)}={\tiny\yng(1,1,1,1)}}.
\end{equation} 

In this paper, we study the limit shape of such invariant Young diagrams of the basic $\kappa$-color BBS when the initial configuration is randomly chosen. Fix a probability distribution $\mathbf{p}=(p_{0},p_{1},\cdots,p_{\kappa})$ on $\{ 0,1,\cdots,\kappa \}$ and let $X:\mathbb{N}\rightarrow \{0,1,\cdots,\kappa \}$ be a random $\kappa$-color BBS configuration where $\mathbb{P}(X^{\mathbf{p}}(x)=i)=p_{i}$ independently for all $x\in \mathbb{N}$ and $0\le i \le \kappa$. For each integer $n\ge 1$, denote
\begin{equation}\label{eq:def_iid_config}
X^{n,\mathbf{p}}(x) = X^{\mathbf{p}}(x)\cdot \mathbf{1}(1\le x \le n),
\end{equation} 
where $\mathbf{1}(A)$ denotes the indicator function of event $A$. A given basic $\kappa$-color BBS configuration $X_{0}:\mathbb{N}\rightarrow  \{0,1,\cdots,\kappa \}$ is said to be a \textit{highest state} if for all $n\ge 1$,
\begin{equation}\label{eq:highest}
\#( \text{balls of color $i$ in $X_{0}$ over $[1,n]$} ) \ge  \#( \text{balls of color $i+1$ in $X_{0}$ over $[1,n]$} ) \qquad \text{$\forall 0\le i < \kappa$},
\end{equation}
where we interpret balls of color 0 as empty boxes.

The following is a one-parameter specialization of the general
result, Theorem \ref{thm:SLLN_rows}, stated in the next section.

\begin{corollary}\label{cor:principal_specialization_kappa1}
	Consider the basic $\kappa$-color BBS initialized at $X^{n,\mathbf{p}}$, where $p_{i}\propto q^{i}$ for all $0\le i \le \kappa$ for some parameter $0<q<1$. Let $\rho_{i}^{(a)}(X^{n,\mathbf{p}})$ denote the $i^{\text{th}}$ row length of the $a$th invariant Young diagram $\mu^{(a)}$ of the BBS started at $X^{n,\mathbf{p}}$. 
	\begin{description}
		\item[(i)] Almost surely as $n\rightarrow \infty$, 
		\begin{align}\label{eq:p_specialization}
		n^{-1}\rho_{i}^{(a)}(X^{n,\mathbf{p}}) \longrightarrow \frac{q^{i+a-1}(1-q)(1-q^a)(1-q^{\kappa+1-a})}
		{(1-q^{\kappa+1})(1-q^{i+a-1})(1-q^{i+a})}.
		\end{align}
		\item[(ii)] The same almost sure convergence in (i) holds for $n^{-1}\rho_{i}^{(a)}(Y^{n,\mathbf{p}})$, where $Y^{n,\mathbf{p}}$ is $X^{n,\mathbf{p}}$ conditioned on being highest state.
	\end{description}
\end{corollary}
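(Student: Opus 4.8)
The plan is to read the statement off the general result, Theorem~\ref{thm:SLLN_rows}, and then carry out the one-parameter evaluation. By that theorem the normalized row lengths $n^{-1}\rho_i^{(a)}(X^{n,\mathbf{p}})$ converge almost surely to an explicit equilibrium density $\bar\rho_i^{(a)}(\mathbf{p})$ expressed through Schur polynomials whose arguments are built from the letter frequencies $\mathbf{p}=(p_0,\dots,p_\kappa)$. Thus part~(i) reduces to a purely algebraic task: substitute the geometric weights $p_i = q^i(1-q)/(1-q^{\kappa+1})$ into that formula and simplify to the claimed product. I would first normalize $\mathbf{p}$ and observe that, up to the common prefactor $(1-q)/(1-q^{\kappa+1})$, the Schur arguments become the geometric progression $(1,q,q^2,\dots)$, so that every Schur polynomial appearing in $\bar\rho_i^{(a)}$ becomes a principal specialization.

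The heart of the computation is then the hook--content (principal specialization) formula
\[
s_\lambda(1,q,\dots,q^{m-1}) = q^{n(\lambda)}\prod_{(r,c)\in\lambda}\frac{1-q^{m+c-r}}{1-q^{h(r,c)}},
\]
applied to the rectangular and hook shapes that enter $\bar\rho_i^{(a)}$. Since these specializations are $q$-factorials and $q$-binomials, the relevant ratios collapse to single factors and one is left with a product of terms of the form $(1-q^{\bullet})$. A useful structural check, which also suggests the right bookkeeping, is the telescoping identity
\[
\frac{q^{i+a-1}}{(1-q^{i+a-1})(1-q^{i+a})} = \frac{1}{1-q}\left(\frac{1}{1-q^{i+a-1}}-\frac{1}{1-q^{i+a}}\right),
\]
so that the target equals $\frac{(1-q^a)(1-q^{\kappa+1-a})}{1-q^{\kappa+1}}\bigl(\tfrac{1}{1-q^{i+a-1}}-\tfrac{1}{1-q^{i+a}}\bigr)$. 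This exhibits $\bar\rho_i^{(a)}$ as a first difference in $i$ of a cumulative quantity (a column count), matching the ``vacancy/row length/column multiplicity'' hierarchy, and it is this cumulative quantity that I expect to specialize most cleanly before differencing back.

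For part~(ii) the limit is unchanged, so it suffices to transfer the almost sure convergence from $X^{n,\mathbf{p}}$ to its highest-state conditioning $Y^{n,\mathbf{p}}$. I would use the exponential rate furnished by the large deviations principle inside Theorem~\ref{thm:SLLN_rows}: for every $\eps>0$ one has $\mathbb{P}(|n^{-1}\rho_i^{(a)}(X^{n,\mathbf{p}})-\bar\rho_i^{(a)}|>\eps)\le e^{-cn}$ for some $c=c(\eps)>0$. It then remains to bound the conditioning event below. With $p_i\propto q^i$ and $0<q<1$ the frequencies are strictly decreasing, so each consecutive count gap $N_i(m)-N_{i+1}(m)$ is a random walk with strictly positive drift; the probability that all $\kappa$ gaps stay nonnegative for every $m\le n$ is bounded below by a positive constant uniformly in $n$. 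Hence $\mathbb{P}(\,\cdot\mid\text{highest})\le C\,\mathbb{P}(\,\cdot\,)$, the exponential bound survives the conditioning, and Borel--Cantelli yields the same almost sure limit for $Y^{n,\mathbf{p}}$.

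The main obstacle is the algebraic core of part~(i): correctly identifying which Schur polynomials (their shapes, number of variables, and the shift induced by normalizing $\mathbf{p}$) occur in $\bar\rho_i^{(a)}$, and then verifying that their principal specializations combine exactly into the stated factorization rather than a mere $q$-hypergeometric expression. Organizing the computation around the cumulative column-count quantity and differencing only at the end, as in the telescoping identity above, is the cleanest route I see to keeping the product form manifest.
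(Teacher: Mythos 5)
Your proposal is correct, but for the computational core of part (i) it takes a genuinely different route from the paper. The paper proves the corollary through its TBA machinery: it reduces the limit to a double difference of the scaled vacancies, $\lim_n n^{-1}\rho_i^{(a)} = \sum_b (C^{-1})_{ab}(\varphi^{(b)}_{i-1}-\varphi^{(b)}_i)$, and then invokes the explicit factorized solution $\varphi^{(a)}_i$ of the vacancy difference equation (Proposition \ref{pr:res}), which is itself verified by direct calculation. You instead specialize the Schur-quotient formula \eqref{eq:thm1_eq1} directly, using homogeneity to peel off the prefactor $(1-q)/(1-q^{\kappa+1})$ and the hook--content formula to factor each rectangular Schur polynomial; the paper explicitly acknowledges in Subsection 7.5 that this works (``This already brings \eqref{eq:thm1_eq1} into the factorized form \eqref{eq:p_specialization}''), so your route is sound and arguably more self-contained, needing no Y-system or Q-system input --- though the paper's route buys the additional factorized formulas for the vacancy and column multiplicity as byproducts. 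For part (ii), the paper simply observes that Theorem \ref{thm:SLLN_rows} already identifies the two limits, so only one formula needs to be computed; your re-derivation of the transfer (exponential concentration from the LDP versus a conditioning cost that is bounded below by a constant when $p_0>\cdots>p_\kappa$, as in Proposition \ref{prop:prob.highest_strict}) is correct but redundant given Theorem \ref{thm:SLLN_rows}(ii), and your one-line justification that positive-drift gap walks stay nonnegative with positive probability should be fleshed out as in that proposition (wait until all gaps are far from the boundary, then use the law of large numbers). Neither point is a gap; both parts of your plan go through.
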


%\iffalse
\begin{figure*}[h]
	\centering
	\includegraphics[width=15cm,keepaspectratio]{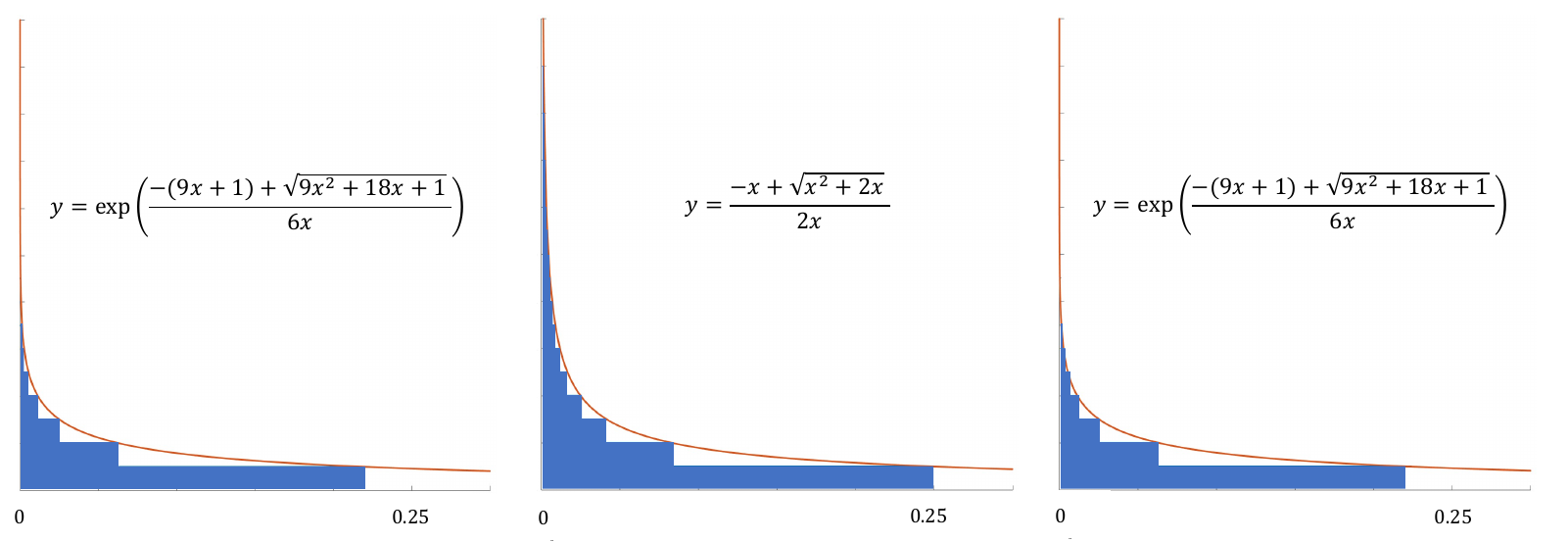}
			\vspace{-0.3cm}
	\caption{ Vertical flip of the invariant Young diagrams $\mu^{(1)}$ corresponding to the 1-color BBS of system size $n=500000$ with ball density $\mathbf{p}=(p_{0},p_{1})$ with $p_{1}=1/3$ (left), $p_{1}=1/2$ (middle), and $p_{1}=2/3$ (right). The equations for the limiting curve $y=y(x)$ are obtained by 
	$x = \eta^{(1)}_y$ from \eqref{eq:thm1_eq3}. The limiting curves for $p_{1}=1/3$ and $p_{1}=2/3$ are the same due to the `row duality' (see Remark \ref{remark:row_duality}).  
	}
	\label{fig:YD_duality}
\end{figure*}
%\fi

\subsection{Background}

The basic $\kappa$-color BBS was introduced in \cite{takahashi1993some}, generalizing the original $\kappa=1$ BBS first invented in 1990 \cite{takahashi1990soliton}. In the most general form of the BBS, each site accommodates a semistandard tableau of rectangular shape with letters from  $\{0,1,\cdots,\kappa \}$ and the time evolution is defined by successive application of the combinatorial $R$ (cf. \cite{fukuda2000energy, hatayama2001, kuniba2006crystal,inoue2012integrable}).  The basic $\kappa$-color BBS treated in this paper corresponds to the case where the tableau shape is the single box. By now, numerous aspects of BBS have been widely investigated in relation to quantum groups, crystal base theory (theory of quantum groups at $q=0$), solvable lattice models, Bethe ansatz, soliton equations, ultradiscretization, tropical geometry and so forth (see, e.g., the review article \cite{inoue2012integrable}).

In the recent years, BBS with random initial configuration has gained a considerable attention in the probability community, aiming to answer the following two central questions: 1) If the random initial configuration is one-sided, what is the limiting shape of the invariant random Young diagram as the system size tend to infinity? 2) If one considers the two-sided BBS (where the initial configuration is bi-directional array of balls), what are the two-sided random initial configurations that are invariant under the BBS dynamics? These questions have been addressed for the basic $1$-color BBS \cite{levine2017phase, ferrari2018soliton, ferrari2018bbs, croydon2018dynamics}. 
In the present work, we rigorously establish the row scaling limit of the invariant Young diagrams for the basic $\kappa$-color BBS for arbitrary $\kappa\ge 1$.  

There are two important works which are strongly related to this paper. In \cite{levine2017phase}, Levine, Lyu, and Pike studied various soliton statistics of the basic $1$-color BBS when the system is initialized according to a Bernoulli product measure with ball density $p$ on $n$ boxes. One of their main result shows that for any $p\in (0,1)$ and integer $i\ge 1$, the $i^{\text{th}}$ row length of the invariant Young diagram $\mu^{(1)}$ scales asymptotically linearly in $n$. This suggests that if we scale the Young diagram horizontally by $1/n$, then the sequence of random Young diagrams should converge to some limiting shape. Their analysis is based on some simple geometric mappings from the associated simple random walks to the invariant Young diagrams, which enables robust analysis of the scaling limit of the invariant Young diagram. However, this connection is not apparent in the general $\kappa\ge 1$ case. 

In a recent publication \cite{kuniba2018randomized}, Kuniba, Lyu and Okado developed and connected two parallel theories to analyze scaling limits of the invariant Young diagrams in the most general BBS, where each site is occupied with a semistandard tableau of rectangular shape with fillings $\{0,1,\cdots, \kappa\}$. Namely, when the initial configuration is given by a product measure, then one can obtain the `expected shape' of the invariant Young diagrams by a Markov chain method. On the other hand, if the initial configuration is conditioned on being the highest states (defined analogously to \eqref{eq:highest} in the general case), then one can analyze the `equilibrium shape' of the Young diagrams by the method of Thermodynamic Bethe Ansatz (TBA). Their main result \cite[Thm. 5.1]{kuniba2018randomized} states that the expected shape obtained from the Markov chain method coincides with the equilibrium shape obtained from the TBA method.

The main contribution of this paper shows that, for the basic $\kappa$-color BBS, the rescaled invariant Young diagrams converge almost surely to the equilibrium shapes as the system size tends to infinity. We also give a probabilistic answer to why the two different methods assuming different initial configuration produce the same scaling limit of the invariant Young diagrams. Roughly speaking, this is because the row lengths are exponentially concentrated around their mean due to a large deviations principle for Markov additive functionals, and the probability of $X^{n,\mathbf{p}}$ being highest decays only polynomially in $n$ (provided  $p_{0}\ge p_{1}\ge \cdots \ge p_{\kappa}$) due to a generalized ballot theorem. Hence one can push the same ergodic theorem from the unconditioned to the conditioned initial measure. This gives a rigorous proof of the `asymptotic equivalence' between the methods of Markov chains and TBA, which was first observed in \cite{kuniba2018randomized}.

\subsection{Notation}

We denote by $\mathbb{Z}$ the set of integers, and by $\mathbb{N}$ and $\mathbb{N}_{0}$ the set of positive and nonnegative integers, respectively.  When $a,b$ are integers, we denote by $[a,b]$ the integer interval $\{ k\in \mathbb{Z}\,:\, a\le k \le b \}$. For a probability distribution $\pi$ on a countable sample space $\Omega$ and a functional $f:\Omega\rightarrow \mathbb{R}$, we let $\EE_{\pi}[f]=\sum_{\omega\in \Omega} f(\omega)\pi(\omega)$ denote the expectation of $f$ with respect to $\pi$.

\subsection{Organization} 

In Section \ref{section:main_results}, we state our main results, Theorems \ref{thm:SLLN_rows} and \ref{thm:row_concentration}. In Section \ref{section:combinatorial_R}, we give a brief introduction to the combinatorial $R$ and define the invariant Young diagrams associated to a basic $\kappa$-color BBS configuration through the associated energy matrix. In Section \ref{section:algorithm_irreducibility}, we introduce the carrier processes and show that they form irreducible and stationary Markov chains (Theorem \ref{thm:stationary_measure} and Lemma \ref{lemma:carrier_irreducibility}). In Section \ref{section:proof of main results}, we prove limit theorems for the rows: (SLLN) Theorem  \ref{thm:SLLN_rows} (i), (LDP) Theorem \ref{thm:row_concentration}, and (FCLT and persistence) Theorem \ref{thm:FCLT_persistence}. 
In Section \ref{section:highest_states}, we study the effect of conditioning the initial measure on the highest states and prove Theorem \ref{thm:SLLN_rows} (ii). In the last Section \ref{section:TBA}, we give an alternate treatment of the basic $\kappa$-color BBS conditioned on highest states by the TBA analysis. 
A more general result in this direction has been obtained in \cite{kuniba2018randomized}. The case treated in this paper corresponds to its one parameter specialization for the basic BBS. 
However it deserves an independent report due to further results on the exact factorized 
scaling form of the vacancy (defined in Subsection \ref{sb:tatki}) 
and the column multiplicity in Proposition \ref{pr:res}.

\section{Main results}
\label{section:main_results}

We state our main results in this section. Let $\mathbf{p}=(p_{0},p_{1},\cdots,p_{\kappa})$ denote a probability distribution on $\{ 0,1,\cdots,\kappa \}$ such that $p_{i}>0$ for all $0\le i\le \kappa$. Let $X^{\mathbf{p}}$ and $X^{n,\mathbf{p}}$ be the random $\kappa$-color BBS configurations defined at \eqref{eq:def_iid_config} and above. We call $X^{\mathbf{p}}$ the \textit{i.i.d. basic $\kappa$-color BBS configuration with density $\mathbf{p}$}, where i.i.d. stands for `independent and identically distributed'.

Let $\lambda_{1}\ge \cdots \ge \lambda_{\kappa+1}\ge 0$ be nonnegative integers. We denote the Young diagram $\lambda$ of row lengths $\lambda_{1}\ge \cdots \ge \lambda_{\kappa+1}$ (allowing rows of length zero) as $\lambda=(\lambda_{1},\cdots, \lambda_{\kappa+1})$. Let $(c^{a})$ denote the $(a\times c)$ Young diagram $(c,c,\cdots,c)$. The \textit{Schur polynomial} of $\lambda$ for parameters $w_{1},\cdots,w_{\kappa+1}$, which is denoted by $s_{\lambda}(w_{1},\cdots,w_{\kappa+1})$, is defined by 
\begin{align}\label{qf}
s_{\lambda}(w_{1},\cdots,w_{\kappa+1}) = \frac{\det\left(w_i^{\lambda_{j}+\kappa+1-j}\right)_{i,j=1}^{\kappa+1}}
{\det\left(w_i^{\kappa+1-j}\right)_{i,j=1}^{\kappa+1}}.
\end{align}

In our first main result, Theorem \ref{thm:SLLN_rows}, we show that the invariant Young diagrams of the i.i.d. configuration $X^{n,\mathbf{p}}$ converge almost surely to some limiting shape, if we scale their rows by $n^{-1}$. Moreover, we also show that the same almost sure scaling limit still holds if we condition the initial configuration to be highest. 

\begin{theorem}[SLLN for rows]\label{thm:SLLN_rows}
	Consider the basic $\kappa$-color BBS initialized at $X^{n,\mathbf{p}}$. Let $\rho_{i}^{(a)}$ denote the $i^{\text{th}}$ row length of the $a$th invariant Young diagram $\mu^{(a)}$. 
	\begin{description}
		\item[(i)] For each $i\ge 1$ and $1\le a \le \kappa$, 
		almost surely as $n\rightarrow \infty$,    
		\begin{equation}\label{eq:thm1_eq1}
		n^{-1}\rho_{i}^{(a)}(X^{n,\mathbf{p}}) \rightarrow \eta^{(a)}_i
		= \eta^{(a)}_i(\kappa, \mathbf{p}) 
		:= \frac{s_{((i-1)^{a-1})}(p_{0},\cdots,p_{\kappa})\cdot s_{(i^{a+1})}(p_{0},\cdots,p_{\kappa})}{s_{(i^{a})}(p_{0},\cdots,p_{\kappa})\cdot s_{((i-1)^{a})}(p_{0},\cdots,p_{\kappa})} \in (0,1].
		\end{equation}
			\item[(ii)] Suppose $p_{0}\ge p_{1}\ge \cdots \ge p_{\kappa}$ and let $Y^{n,\mathbf{p}}$ denote $X^{n,\mathbf{p}}$ conditioned on being highest state. For each $i\ge 1$ and $1\le a \le \kappa$, 
		almost surely as $n\rightarrow \infty$,  
		\begin{equation}\label{eq:thm1_eq2}
		n^{-1}\rho_{i}^{(a)}(Y^{n,\mathbf{p}}) \rightarrow \eta^{(a)}_i \in (0,1],
		\end{equation}
		where $\eta^{(a)}_i$ is the same as \eqref{eq:thm1_eq1}.  
	\end{description}
\end{theorem}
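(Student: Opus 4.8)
The plan is to reduce both parts to an ergodic theorem for the carrier processes and then to identify the resulting stationary expectations with the rectangular Schur polynomials appearing in \eqref{eq:thm1_eq1}. For part (i), I would first use the combinatorial-$R$ description of Section \ref{section:combinatorial_R} to express the invariant-Young-diagram data of $X^{n,\mathbf{p}}$ through energy functions. Concretely, letting a carrier of a fixed type sweep the configuration from left to right, its internal state after absorbing the first $x$ letters $X^{\mathbf{p}}(1),\dots,X^{\mathbf{p}}(x)$ defines a process $(C_x)_{x\ge 0}$, and the total energy it accumulates is a time-additive functional $E_n=\sum_{x=1}^n f(C_{x-1},C_x)$ with $f$ bounded. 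Because the input letters are i.i.d.\ with law $\mathbf{p}$, the process $(C_x)$ is a time-homogeneous Markov chain; Lemma \ref{lemma:carrier_irreducibility} gives its irreducibility and Theorem \ref{thm:stationary_measure} gives its stationary law $\pi$, so the chain is ergodic and the strong law for Markov additive functionals yields $n^{-1}E_n\to\EE_\pi[f]$ almost surely.

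Each row length $\rho^{(a)}_i$ is recovered from finitely many such energies (carriers of finitely many levels and bounded capacity) by the fixed relations encoded in the energy matrix of Section \ref{section:combinatorial_R}. Hence the almost sure convergence of every $n^{-1}E_n$ transfers to almost sure convergence of $n^{-1}\rho^{(a)}_i(X^{n,\mathbf{p}})$ to the corresponding combination of stationary expectations, the edge effect of the carrier holding $O(1)$ balls at the right endpoint being negligible after normalizing by $n$. It then remains to evaluate the limit in closed form. I expect the stationary measure $\pi$ of Theorem \ref{thm:stationary_measure} to be a Gibbs measure whose weights are monomials in $p_0,\dots,p_\kappa$, so that each $\EE_\pi[f]$ is a ratio of characters of $GL_{\kappa+1}$, i.e.\ of Schur polynomials of rectangular shape; assembling the energy-matrix relation should then collapse to exactly the ratio $\eta^{(a)}_i=s_{((i-1)^{a-1})}s_{(i^{a+1})}/\bigl(s_{(i^a)}s_{((i-1)^a)}\bigr)$ (all evaluated at $(p_0,\dots,p_\kappa)$), and the bound $\eta^{(a)}_i\in(0,1]$ follows because it is the limit of $n^{-1}$ times a quantity that never exceeds $n$.

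For part (ii) I would follow the strategy indicated in the introduction: conditioning on the highest-state event is too mild to move the limit. First, the large deviations principle of Theorem \ref{thm:row_concentration} gives, for every $\eps>0$, a constant $c=c(\eps)>0$ with $\PP\bigl(|n^{-1}\rho^{(a)}_i(X^{n,\mathbf{p}})-\eta^{(a)}_i|>\eps\bigr)\le e^{-cn}$ for all large $n$. Second, the hypothesis $p_0\ge p_1\ge\cdots\ge p_\kappa$ makes highestness a ballot-type event, and a generalized ballot theorem should provide only a polynomial lower bound $\PP(X^{n,\mathbf{p}}\text{ is highest})\ge c' n^{-\gamma}$. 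Since conditioning divides by this event, $\PP\bigl(|n^{-1}\rho^{(a)}_i(Y^{n,\mathbf{p}})-\eta^{(a)}_i|>\eps\bigr)\le (c')^{-1}n^{\gamma}e^{-cn}$, which is summable in $n$; realizing the conditioned configurations on a common probability space, the Borel--Cantelli lemma then gives the almost sure convergence \eqref{eq:thm1_eq2}.

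The step I expect to be the main obstacle is the closed-form evaluation in part (i): identifying the stationary measure of the multicolor carrier and performing the character computation that produces rectangular Schur polynomials. For $\kappa=1$ the carrier reduces to a single queue whose energy is read off from a simple random walk, but for general $\kappa$ the carrier state is a multiset of colored balls, its stationary law is genuinely $(\kappa+1)$-dimensional, and matching the stationary expectation to the Weyl/Jacobi--Trudi ratio while checking that the energy-matrix combination telescopes to precisely $\eta^{(a)}_i$ is the delicate combinatorial heart of the argument. A lesser but real difficulty in part (ii) is making the ballot estimate quantitative enough (polynomial, with the correct handling of the ordering $p_0\ge\cdots\ge p_\kappa$) to be overwhelmed by the exponential rate from Theorem \ref{thm:row_concentration}.
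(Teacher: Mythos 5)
Your proposal follows essentially the same route as the paper: part (i) is the Markov chain ergodic theorem applied to the carrier-plus-letter chain, where Theorem \ref{thm:stationary_measure} supplies the product-form stationary law with Schur-polynomial partition function, so the limit is $\eps_{i}^{(a)}-\eps_{i-1}^{(a)} = s_{(i^{a},1)}/s_{(i^{a})} - s_{((i-1)^{a},1)}/s_{((i-1)^{a})}$, and part (ii) is exactly the LDP-versus-polynomial-ballot-bound Borel--Cantelli argument of Section \ref{section:highest_states}. The one step you flagged as the main obstacle --- collapsing the stationary expectations to the single ratio $\eta^{(a)}_i$ --- is handled in the paper by the Pl\"{u}cker relations applied to that difference of Schur-polynomial ratios.
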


We remark that the expression in \eqref{eq:thm1_eq1} involving Schur polynomials has already appeared in \cite[eq. (94)]{kuniba2018randomized} as the equilibrium of the rescaled row length for the i.i.d. configuration. Moreover, under the strong monotonicity assumption $p_{0}>p_{1}>\cdots>p_{\kappa}$, the same limiting shape was computed for the i.i.d. configuration conditional on being a highest state by Thermodynamic Bethe Ansatz (TBA). However, rigorous proof of almost sure convergence as stated in Theorem \ref{thm:SLLN_rows}, especially with the weak monotonicity assumption $p_{0}\ge p_{1}\ge \cdots \ge p_{\kappa}$, is a new contribution of the present paper.

\begin{remark}[Row duality]\label{remark:row_duality}
	Note that if $\kappa=a=1$ and $q:=p_{1}/p_{0}$, then (\ref{eq:thm1_eq1}) reads
	\begin{equation}\label{eq:thm1_eq3}
	\eta^{(1)}_i = \begin{cases}
	\frac{(q-1)^{2}q^{i}}{(q+1)(q^{i}-1)(q^{i+1}-1)} 	 & \text{if $p_{1}\ne 1/2$}\\
	\frac{1}{2i(i+1)} & \text{otherwise}.
	\end{cases}
	\end{equation}
	See Example \ref{ex:energy_computation_single_color} for more details. Observe that the function $\eta^{(1)}_1(1,\mathbf{p})$ is invariant under the swapping transformation $\mathbf{p}=(p_{0},p_{1})\mapsto \mathbf{p}'=(p_{1},p_{0})$ (see Figure \ref{fig:YD_duality}). This gives the `row version' of the duality between the supercritical and subcritical BBS with ball densities $p$ and $1-p$, first observed in \cite{levine2017phase} (see, in particular, Lemma 7.1). 
\end{remark}

In our second main result, Theorem \ref{thm:row_concentration}, we establish a \textit{large deviations principle} (LDP) for the row lengths of the i.i.d. configuration $X^{n,\mathbf{p}}$. This implies that the rate of convergence in Theorem \ref{thm:SLLN_rows} (i) is exponential. 

\begin{theorem}[LDP for rows]\label{thm:row_concentration}
	Consider the basic $\kappa$-color BBS initialized at $X^{n,\mathbf{p}}$. For each $i\ge 1$ and $1\le a \le \kappa$, there exists a convex rate function $\Lambda^{*}$ with the following properties: 
	\begin{description}
		\item[(i)] For any Borel set $F\subseteq \mathbb{R}$,  
		\begin{align}\label{eq:cramer_thm}
		-\inf_{u\in \mathring{F}} \Lambda^{*}(u) &\le \liminf_{n\rightarrow \infty} \frac{1}{n} \log \mathbb{P}\left(n^{-1} \rho_{i}^{(a)}(X^{n,\mathbf{p}}) \in F  \right) \\
		&\le \limsup_{n\rightarrow \infty} \frac{1}{n} \log \mathbb{P}\left(n^{-1} \rho_{i}^{(a)}(X^{n,\mathbf{p}}) \in F  \right) \le -\inf_{u\in \bar{F}} \Lambda^{*}(u),
		\end{align}
		where $\mathring{F}$ and $\bar{F}$ denotes the interior and closure of $F$, respectively. 
		\vspace{0.1cm}
		\item[(ii)] Let $\eta^{(a)}_i$ be the quantity defined at \eqref{eq:thm1_eq1}. Then there exists a constant $\nu\in (\eta^{(a)}_i,1]$ such that  $\Lambda^{*}\in (0,\infty)$ on $[0,\nu]\setminus \{\eta^{(a)}_i\}$ . 
	\end{description}
\end{theorem}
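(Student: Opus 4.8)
The plan is to exhibit $\rho_i^{(a)}(X^{n,\mathbf{p}})$ as a bounded additive functional of the stationary, irreducible carrier Markov chain of Section \ref{section:algorithm_irreducibility} and then apply the G\"artner--Ellis theorem. Concretely, I would first extract from the combinatorial $R$/energy-matrix description of Section \ref{section:combinatorial_R} the fact that the cumulative energies $E_\ell^{(b)}(X^{n,\mathbf{p}})$ — the total local energy recorded while threading a carrier of capacity $\ell$ through the input — encode the invariant Young diagrams, and that each row length is a fixed finite integer combination of them with $\ell\le i+1$ (reducing, when $\kappa=1$, to $\rho_i^{(1)}=E_i^{(1)}-E_{i-1}^{(1)}$, since $E_\ell^{(1)}=\sum_{j=1}^{\ell}\mu_j^{(1)}$). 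Running the finitely many relevant carriers simultaneously on the common i.i.d.\ input produces a single Markov chain $W=(W_t)_{t\ge 1}$ whose state records the joint carrier contents; since a capacity-$\ell$ carrier admits at most $\binom{\ell+\kappa}{\kappa}$ configurations, $W$ has a \emph{finite} state space, is irreducible by Lemma \ref{lemma:carrier_irreducibility} and stationary by Theorem \ref{thm:stationary_measure}. The local energy increments are bounded functions of $(W_{t-1},W_t)$, so $\rho_i^{(a)}(X^{n,\mathbf{p}})=\sum_{t=1}^n g(W_{t-1},W_t)$ for a bounded signed $g$; in particular $n^{-1}\rho_i^{(a)}\in[0,1]$.

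With this representation in hand I would compute the scaled cumulant generating function $\Lambda(\theta):=\lim_{n\to\infty}n^{-1}\log\EE[e^{\theta\rho_i^{(a)}(X^{n,\mathbf{p}})}]$. For a bounded additive functional of a finite irreducible chain this limit exists for every $\theta\in\R$ and equals $\log\varrho(\theta)$, where $\varrho(\theta)$ is the Perron--Frobenius eigenvalue of the tilted kernel $P_\theta(w,w'):=P(w,w')\,e^{\theta g(w,w')}$, independently of the (stationary) initial law. By Perron--Frobenius theory $\varrho(\theta)$ is simple and strictly positive, and by analytic perturbation theory it is real-analytic in $\theta$; thus $\Lambda$ is finite and differentiable on all of $\R$, and convex as a limit of cumulant generating functions, hence essentially smooth. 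The G\"artner--Ellis theorem then yields the full large deviations bounds \eqref{eq:cramer_thm} with the convex rate function $\Lambda^*(u)=\sup_{\theta\in\R}\bigl(\theta u-\Lambda(\theta)\bigr)$, proving (i).

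For (ii), convexity of $\Lambda^*$ is automatic, and since $\Lambda(0)=\log\varrho(0)=0$ with $\Lambda'(0)$ equal to the stationary mean drift, the ergodic theorem together with Theorem \ref{thm:SLLN_rows}(i) gives $\Lambda'(0)=\eta_i^{(a)}$ and hence $\Lambda^*(\eta_i^{(a)})=0$. To obtain strict positivity on $[0,\nu]\setminus\{\eta_i^{(a)}\}$ I would establish non-degeneracy, i.e.\ that the asymptotic variance $\Lambda''(0)$ is strictly positive (equivalently, $g$ is not cohomologous to a constant along $W$); this makes $\Lambda$ strictly convex and forces $\Lambda^*(u)>0$ for every $u\ne\eta_i^{(a)}$. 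Finiteness on $[0,\nu]$ follows from boundedness of $g$: the domain of $\Lambda^*$ is the interval $[\underline g,\overline g]$ of extreme achievable growth rates, where $\overline g=\lim_{\theta\to+\infty}\Lambda(\theta)/\theta>\eta_i^{(a)}$ and $\underline g=\lim_{\theta\to-\infty}\Lambda(\theta)/\theta\le 0$, so any $\nu\in(\eta_i^{(a)},\overline g]$ gives $[0,\nu]\subseteq[\underline g,\overline g]$. It remains to check $\Lambda^*(0)<\infty$, which amounts to the combinatorial observation that, with probability decaying only exponentially, the input avoids producing the solitons counted by $\rho_i^{(a)}$ over a positive-density set of sites.

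The main obstacle is the first step: reading off from the energy-matrix formalism of Section \ref{section:combinatorial_R} the exact expression of $\rho_i^{(a)}$ as a bounded integer combination of carrier energies, and checking that a single finite-state chain simultaneously carries all the energies that enter. Once this additive-functional representation is secured, the large deviations analysis is the standard Perron--Frobenius/G\"artner--Ellis route. The only other point requiring genuine care is the non-degeneracy $\Lambda''(0)>0$ underlying the strict positivity of $\Lambda^*$ away from $\eta_i^{(a)}$, which I expect to follow from exhibiting, for each fixed $(i,a)$, two equally weighted finite input words whose carrier passages record different total energies.
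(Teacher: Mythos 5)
Your part (i) is essentially the paper's proof: the paper also writes $\rho_{c+1}^{(a)}(X^{n,\mathbf{p}})=E_{c+1}^{(a)}-E_{c}^{(a)}$ as an additive functional $\sum_k g_{\rho}(\mathtt{Z}_k)$ of the joint carrier chain $\mathtt{Z}_t=(\Gamma_{c}^{(a)}(t),\Gamma_{c+1}^{(a)}(t),X^{\mathbf{p}}(t+1))$ restricted to the jointly reachable component $\Omega_0$ of Lemma \ref{lemma:carrier_irreducibility}, and then invokes the Markov Cram\'er/Perron--Frobenius machinery with the tilted kernel $P_{tg}$ and $\Lambda^*$ the Legendre transform of $\log\lambda_P(tg)$. (Two small imprecisions: only the two carriers of capacities $i-1$ and $i$ are needed, not all $\ell\le i+1$; and the chain is started at the ground state, not at stationarity -- harmless since the LDP for finite irreducible chains is insensitive to the initial law.)

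For part (ii) your route genuinely differs from the paper's, and it has a gap at exactly the point where the paper does its real work. The paper proves finiteness of $\Lambda^*$ on $[0,\nu]$ by exhibiting explicit exponential lower bounds on the relevant probabilities (Proposition \ref{prop:basic_est_thm1}): the all-zero input gives $\mathbb{P}(n^{-1}\rho_i^{(a)}\le\eps)\ge p_0^n$ below the mean, and above the mean a ``frozen interval'' surgery -- modifying the input on a positive density of intervals where the joint carrier sits at $(U_{c}^{(a)},U_{c+1}^{(a)})$ and the sites are empty -- raises the row length by a positive fraction of $n$ at cost $\delta^n$; positivity is then obtained from analyticity of $\lambda_P(tg)$ via the identity theorem. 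You instead propose to deduce everything from non-degeneracy $\Lambda''(0)>0$ and the shape of the domain $[\underline g,\overline g]$ of $\Lambda^*$. This is a viable route, but the claim $\overline g>\eta_i^{(a)}$ -- which is what produces a $\nu>\eta_i^{(a)}$ at all -- is precisely the non-degeneracy statement, and your proposed witness (``two equally weighted finite input words whose carrier passages record different total energies'') does not suffice as stated: if $g_{\rho}$ were cohomologous to a constant, i.e.\ $g_{\rho}(w,w')=\eta_i^{(a)}+h(w')-h(w)$, two words of the same length could still record different totals simply because they leave the joint carrier in different states. To rule out the coboundary degeneracy you need two words of equal length that return the \emph{joint} carrier to a common state while recording different totals; producing such words is exactly what the paper's frozen-interval construction accomplishes, so this step cannot be waved through. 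Once that is supplied, your argument closes (and in fact the strict positivity of $\Lambda^*$ off $\eta_i^{(a)}$ already follows from differentiability of $\Lambda$ at $0$ with $\Lambda'(0)=\eta_i^{(a)}$, without strict convexity; the non-degeneracy is needed only for finiteness on $(\eta_i^{(a)},\nu]$).
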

The rate function $\Lambda^{*}$ in the above theorem is explicitly given as the Legendre transform of the logarithmic principal eigenvalue of the exponentially weighted transition matrix of an underlying Markov chain called carrier process. See \eqref{eg:def_legendre} and also Examples \ref{ex:3-color_row1} and \ref{example2}.

\vspace{0.3cm}
\section{The combinatorial $R$ and the energy matrix}
\label{section:combinatorial_R}

The basic $\kappa$-color BBS has another description in terms of the crystal base theory \cite{kashiwara1991crystal} and the combinatorial $R$. While the definition of time evolution of BBS given in (\ref{eq:BBS_def_nonlocal}) involves the nonlocal movements of balls, such a crystal theoretical formulation allows a fully local description in terms of the accompanying `carrier process', which was first introduced in \cite{takahashi1997box} for the original $\kappa=1$ BBS. In the multicolor case, a similar carrier version of the time evolution can be defined, and the action of the carrier on each box is given by the combinatorial $R$. We give a brief introduction in this section, and refer to \cite{inoue2012integrable, lam2014rigged} for more backgrounds and details. 

\subsection{Combinatorial $R$ and the local energy $H$}
\label{subsection:combinatorial_R_def}

Fix an integer parameter $\kappa\ge 1$. For each integer $1\le m \le \kappa$ and $n\ge 1$, let $B^{(m)}_{n}=B^{(m)}_{n}(\kappa)$ be the set of all semistandard tableaux of rectangular shape $(m \times n)$ with letters from $\{0,1,\cdots, \kappa\}$. Equivalently, it is the set of all $(m \times n)$ matrices whose entries are from $ \{0,1,\cdots,\kappa \}$ and are weakly increasing on the rows and strictly increasing on columns. For a semistandard tableau $b$ and an integer $x\ge 0$, denote by $(b\leftarrow x)$ the tableau obtained by inserting $x$ into $b$ using Schensted row insertion. If $x$ is a word $x_{1}x_{2} \ldots x_{r}$ of nonnegative integers, then we can define $(b\leftarrow x)$ similarly by $(\cdots (((b\leftarrow x_{1})\leftarrow x_{2}) \cdots \leftarrow x_{r})$. For example, 
\begin{equation}\label{eq:ex_insertion}
\left( \young(12,34) \leftarrow 1\,3\,4 \right)  = \left( \young(11,24,3) \leftarrow 3\, 4 \right)  = \left( \young(113,24,3) \leftarrow 4 \right) = \young(1134,24,3).
\end{equation}
For each element $B\in B_{n}^{(m)}$, denote by $\textup{row}(B)$ the row word obtained by concatenating the rows of $B$ from bottom to top to the right. For example,
\begin{equation}
\textup{row}\left( \young(12,34) \right)= 3\, 4\, 1\, 2%\young(3412).
\end{equation}
Then, it is known that \cite{shimozono2002affine} there exists a unique map $R:B_{c}^{(a)} \times B_{s}^{(r)} \rightarrow B_{s}^{(r)} \times B_{c}^{(a)}$, $(C,B)\mapsto (B',C')$ satisfying 
\begin{equation}\label{eq:CR_factorization}
(B\leftarrow \textup{row}(C)) = (C'\leftarrow \textup{row}(B')).
\end{equation} 
We call this unique map the \textit{combinatorial $R$}. We denote by $R_{2}(C_{1},x_{1})\in B_{c}^{(a)}$ the second component of the image $R(C_{1},x_{1})$.

The associated \textit{local energy} is a function $H:B_{c}^{(a)} \times B_{s}^{(r)} \rightarrow \mathbb{N}_{0}$ where  
\begin{equation}\label{eq:def_H}
H(C,B) = \sum_{i>\max(a,r)} \text{length of the $i^{\text{th}}$ row of $(B\leftarrow \textup{row}(C)$} ).
\end{equation} 
For our running example in (\ref{eq:ex_insertion}), we have 
\begin{equation}
H\left( \young(12,34), \,  \young(134) \right)  = 1.
\end{equation}

Given $(C,B)$, an algorithm for finding 
$(B',C')$ satisfying (\ref{eq:CR_factorization}) 
is known \cite[p.55]{okado2007part} even though somewhat cumbersome. In the special case of the combinatorial $R$ acting on $B_{c}^{(a)}\times B_{s}^{(r)}$ with $a=r=1$ or $c=s=1$, a diagramatic computation rule was given in \cite{nakayashiki1997kostka}. In subsection \ref{subsection:pf_lemma_irreducibility}, we give a simple algorithm for the case $r=s=1$, which is sufficient to analyze the basic $\kappa$-color BBS.

\vspace{0.2cm}
\subsection{The carrier processes and the energy matrix}
\label{subsection:general_BBS}

At the beginning of Section \ref{section:combinatorial_R}, we mentioned that the basic $\kappa$-color BBS can be formulated in terms of the `carrier' and the combinatorial $R$. One of the advantages in such formulation is that the $\kappa$-tuple of invariant Young diagrams can be extracted by simply running carriers with different shapes over a given basic $\kappa$-color BBS configuration. In this subsection, we introduce a formal setup to state such observation.

Fix a basic $\kappa$-color BBS configuration $X$, which we may identify with a map $\mathbb{N}\rightarrow B_{1}^{(1)}=B_{1}^{(1)}(\kappa)$. Fix integers $(a,c)\in [1,\kappa]\times \mathbb{N}$. Let $U_{c}^{(a)}\in B_{c}^{(a)}$ be the $(a\times c)$ tableau such that each box in its $i^{\text{th}}$ row is filled with $i-1\in \{0,1,\cdots,\kappa \}$. We define two maps $\Gamma:\mathbb{N}_{0}\rightarrow B_{c}^{(a)}$ and $X':\mathbb{N}\rightarrow B_{1}^{(1)}$ as follows. First let $\Gamma(0)=U_{c}^{(a)}$ and then recursively for each $x\ge 1$, we require
\begin{equation}
R(\Gamma(x-1), X(x)) =  (X'(x),\Gamma(x))\in B_{1}^{(1)}\times B_{c}^{(a)},
\end{equation}  
where $R$ is the combinatorial $R$ defined in the previous subsection.

In words, we run a $B_{c}^{(a)}$-carrier on the array $X$ starting with the initial tableau $U_{c}^{(a)}$. The carrier with a certain state $\in B_{c}^{(a)}$ acts on the next site with state $\in B_{1}^{(1)}$ according to the combinatorial $R$, producing a site state and new carrier state. Hence $\Gamma(x)$ may be regarded as the carrier state after scanning the sites in the interval $[1,x]$. In this sense, we call $\Gamma_{c}^{(a)}=(\Gamma(x))_{x\ge 0}$ the \textit{$B_{c}^{(a)}$-carrier process over} $X$ (see Figure \ref{fig:BBS_transfer}). The integer parameters $c$ and $a$ are called the \textit{capacity} and \textit{height} of the carrier, respectively. 

%\iffalse
\begin{figure*}[h]
	\centering
	\vspace{-0.1cm}
	\includegraphics[width=1 \linewidth]{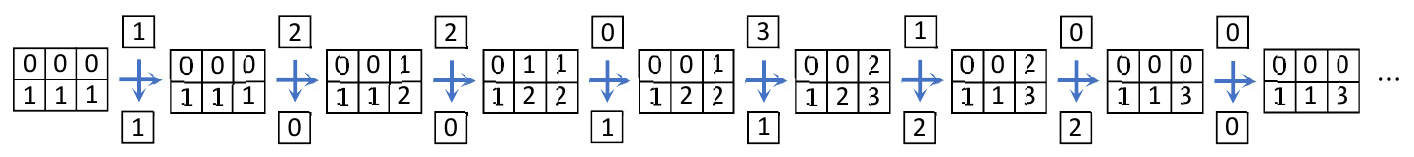}
	\vspace{-0.5cm}
	\caption{ Initial basic 3-color BBS configuration $X=(12203100\cdots)$ (top row) associated $B_{3}^{(2)}$-carrier process $\Gamma_{3}^{(2)}$ (middle row), and new basic 3-color BBS configuration $X'=(10011220\cdots)$ (bottom row). At each crossing in the diagram, a carrier state $\Gamma(x-1)$ and a box state $X(x)$ are mapped to a new box state $X'(x)$ and a new carrier state $\Gamma(x)$ by the combinatorial $R$. 
	}
	\label{fig:BBS_transfer}
\end{figure*}
%\fi

An important fact is that if $a=1$ and $c\ge 1$ is large enough relative to the total number of balls in $X$, then the induced update map $\mathcal{T}_{c}^{(a)}:X\mapsto X'$ in fact agrees with the time evolution of the basic $\kappa$-color BBS given in (\ref{eq:BBS_def_nonlocal}) (see. e.g., \cite{hatayama2001factorization}). Hence, this gives an alternate characterization of the basic $\kappa$-color BBS defined in the introduction\footnote{ The update map $\mathcal{T}_{c}^{(a)}$ may not preserve number of balls when $a>1$, as seen in Figure \ref{fig:BBS_transfer}. In order to make it a time evolution that preserves number of balls, we need to introduce `barriers' at the right tail of the state space. See \cite[Subsection 2.2]{kuniba2018randomized}.}.

The major advantage of the above construction of the basic $\kappa$-color BBS 
is that the conserved quantities are obtained 
by simply running carriers of different capacities and heights. Namely, for a given basic $\kappa$-color BBS configuration $X$ and for each integers $c\ge 1$ and $1\le a \le \kappa$, we define its \textit{(row transfer matrix) energy}
\begin{equation}\label{eq:def_row_transfer_energy}
E_{c}^{(a)}(X) = \sum_{x=0}^{\infty} H(\Gamma(x), X(x+1)),
\end{equation}
where $\Gamma_{c}^{(a)} = (\Gamma(x))_{x\ge 0}$ is the $B_{c}^{(a)}$-carrier process over $X$, and $H$ is the local energy function introduced in the previous subsection. One can see that (\ref{eq:def_row_transfer_energy}) is convergent from the definition (\ref{eq:def_H}). These quantities can be collected as a $(\infty\times \kappa)$ integer matrix $E(X)$ whose $(c,a)$ entry is $E_{c}^{(a)}(X)$, which we call the \textit{energy matrix} of $X$. For the running example of 4-color BBS given in the introduction, we have 
\begin{equation}\label{eq:ex_energy_mx}
X = (1 1 2 1 4 0 1 0 1 2 1 4 2 0 4 4 2 0 1 2 0 0 0 \cdots) 
\mapsto 
E(X) = \begin{bmatrix} 
10  &   5  &   2  &   1 \\
13  &   7  &   3  &   2 \\
15  &   8  &   4  &   3 \\
16  &   9  &   4  &   4 \\ 
16  &   9  &   4  &   4 \\
\vdots & \vdots & \vdots & \vdots \\
\end{bmatrix}.
\end{equation}

The invariants corresponding to the first column of the energy matrix were introduced in \cite{fukuda2000energy}. Based on the successive application of the Yang-Baxter equation for the combinatorial $R$, the authors also established its time invariance. A similar method shows the time invariance of the entire energy matrix. Namely, for any basic $\kappa$-color BBS trajectory $(X_{t})_{t\ge 0}$, we have 
\begin{equation}\label{eq:time_invariance2}
E( X_{t} ) \equiv  E( X_{0} ) \quad \forall t\ge 0.
\end{equation}   

Given the invariance of the energy matrix, we construct a $\kappa$-tuple  $(\mu^{(1)}(X_{t}),\cdots,\mu^{(\kappa)}(X_{t}))$ of invariant Young diagrams by setting the length of the $i^{\text{th}}$ row of the $a^{\text{th}}$ Young diagram $\mu^{(a)}(X_{t})$, which we denote by $\rho_{i}^{(a)}(X_{t})$, through the following relation
\begin{equation}\label{eq:def_YDs}
\rho_{1}^{(a)}(X_{t})+\cdots+\rho_{i}^{(a)}(X_{t}) = E_{i}^{(a)}(X_{t}) \quad \text{$\forall$ $i\ge 1$ and $1\le a \le \kappa$}.
\end{equation} 
We note that there is another construction of these invariant Young diagrams using the Kerov-Kirillov-Reshetikhin bijection \cite{kerov1988combinatorics}. The equivalence between the two constructions is proven in \cite{sakamoto2009kirillov}. Hence (\ref{eq:def_YDs}) does indeed define valid Young diagrams.

We remark that `physical' meaning of the energy matrix can be given in terms of the soliton decomposition. For instance, let $t\ge 1$ be large enough so that all solitons of different lengths are separated with enough spacing between them. If we run the $B_{1}^{(1)}$-carrier over $X_{t}$, then the summation (\ref{eq:def_row_transfer_energy}) for $E_{1}^{(1)}(X_{t})$ picks up the left boundaries of distinct solitons, so the corresponding energy equals the total number of solitons, as seen from (\ref{eq:def_YDs}) with $i=a=1$. More generally, $\rho_{i}^{(1)}(X_{t})$ equals the number of solitons of length $\ge i$ (see \cite{fukuda2000energy} for a proof). See \cite{kuniba2006crystal} for soliton interpretations of the higher order Young diagrams.

\vspace{0.2cm}

\section{Irreducibility and stationary of the carrier processes}
\label{section:algorithm_irreducibility}

\subsection{Carrier processes and joint irreducibility}

Let $\mathbf{p}=(p_{0},p_{1},\cdots,p_{\kappa})$ be a probability distribution on $\{ 0,1,\cdots,\kappa \}$ such that $p_{i}>0$ for all $0\le i\le \kappa$. Let $X^{\mathbf{p}}$ and $X^{n,\mathbf{p}}$ be the random $\kappa$-color BBS configurations defined in Subsection \ref{subsection:intro1}. We call $X^{\mathbf{p}}$ the \textit{i.i.d. basic $\kappa$-color BBS configuration with density $\mathbf{p}$}, where i.i.d. stands for `independent and identically distributed'. If we run the $B_{c}^{(a)}$-carrier over $X^{\mathbf{p}}$, then the associated carrier process $\Gamma_{c}^{(a)}=(\Gamma(x))_{x\ge 0}$ becomes a stochastic process. In fact, since its evolution is determined by the combinatorial $R$ and the i.i.d. input $X^{\mathbf{p}}$, the carrier process becomes a Markov chain on the finite state space $B_{c}^{(a)}=B_{c}^{(a)}(\kappa)$. We call this Markov chain as the \textit{$B_{c}^{(a)}$-carrier process over $X^{\mathbf{p}}$}. 

\begin{example}\label{example1}
	Let $\kappa=2$ and ball density $\mathbf{p}=(p_{0},p_{1},p_{2})$, where $p_{i}>0$ for all $0\le i \le 2$. Consider the carrier process $\Gamma_{5}^{(1)}$ over $X^{\mathbf{p}}$.  We may identify a tableau $C\in B_{5}^{(1)}(2)$ with an integer vector $(c_{1},c_{2})\in \mathbb{N}_{0}^{2}$ such that $c_{1}+c_{2}\le 5$, where $c_{i} = \#(\text{balls of color $i\in C$})$. For example, 
	\begin{equation}
	\young(00122) \simeq (1,2).
	\end{equation}

	Then by invoking the algorithm for computing the combinatorial $R$ given above, we can obtain the state space diagram of this chain as shown in Figure \ref{fig:MC_diagram}. Since $p_{i}>0$ for all $0\le i \le 2$, clearly the chain is irreducible. Since the state space is finite,  there must be a unique stationary distribution, which we may denote by $\pi=\pi_{5}^{(1)}$ in this example. Observe that this stationary distribution is characterized by the following detailed balance equations:
	\begin{equation}\label{eq:balance_2color}
	\begin{cases}
	\pi(a,b) =   \pi(a,b+1)p_{0} +\pi(a-1,b)p_{1} +  \pi(a+1,b-1) p_{2} & \text{for $1\le a,b$ and $a+b\le 4$} \\
	\pi(a,0) =   \pi(a+1,0)p_{0} +\pi(a-1,0)p_{1} +  \pi(a,1)p_{0}   & \text{for $1\le a\le 4$} \\
	\pi(0,b) =  \pi(0,b+1) p_{0} +  \pi(1,b-1)p_{2} +\pi(0,b-1)p_{2}   & \text{for $1\le b\le 4$}\\
	\pi(a,b) =  \pi(a-1,b) p_{1} +  \pi(a+1,b-1)p_{2} +\pi(a-1,b+1)p_{1}   & \text{for $1\le a,b$ and $a+b=5$} \\
	\pi(0,0) = \pi(0,0)p_{0} + \pi(1,0)p_{0} + \pi(0,1) p_{0} & \\
	\pi(5,0) = \pi(4,0)p_{1} + \pi(5,0)p_{1} + \pi(4,1)p_{1} & \\
	\pi(0,5) = \pi(0,4)p_{2} + \pi(1,4)p_{2} + \pi(0,5)p_{2} &
	\end{cases}
	\end{equation}   
	
		\begin{figure*}[h]
		\centering
		\includegraphics[width=0.47 \linewidth]{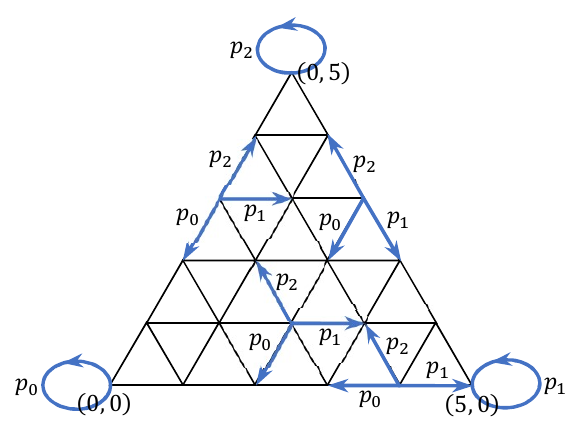}
		\caption{ State space diagram for the carrier process $\Gamma^{(1)}_{5}$ on the state space $\{ (c_{1},c_{2})\in \mathbb{N}_{0}^{2}\,|\, c_{1}+c_{2}\le 5 \}$. Each node has coordinate $(c_{1},c_{2})$, where $c_{i}$ is the number of balls of color $i$ currently in the carrier. The transition kernel on each of the three sides and the interior of the state space (viewed as a triangular lattice) is constant, and it is illustrated in blue arrows with probabilities $p_{i}$'s at some of the representative nodes. 
		}
		\label{fig:MC_diagram}
	\end{figure*}

	It is straightforward to check that the stationary distribution $\pi$ is given by 
	\begin{equation}\label{eq:stationary_measure_product_formula_ex}
	\pi(a,b) = \frac{1}{Z_{5}^{(1)}} p_{0}^{5-a-b}p_{1}^{a}p_{2}^{b},
	\end{equation}
	where the partition function $Z$ can be written as 
	\begin{equation}
	Z_{5}^{(1)} = \sum_{\substack{a,b,c\ge 0 \\ a+b+c=5}} p_{0}^{a}p_{1}^{b}p_{2}^{c}.
	\end{equation}
	$\hfill \blacktriangle$ 
\end{example}

The grounding result in this paper is that the carrier processes over $X^{\mathbf{p}}$ is an irreducible Markov chain with a unique stationary distribution, which is given by a simple product form. Furthermore, its partition function (normalization constant) becomes a Schur polynomial, which was defined at \eqref{qf}.
\begin{theorem}\label{thm:stationary_measure}
	The $B_{c}^{(a)}$-carrier process over $X^{\mathbf{p}}$ is an irreducible Markov chain with a unique stationary distribution $\pi_{c}^{(a)}$. Furthermore, $\pi_{c}^{(a)}$ is given by  
	\begin{equation}\label{eq:stationary_measure_product_formula}
	\pi_{c}^{(a)}(C) = \frac{1}{Z_{c}^{(a)}} \prod_{i=0}^{\kappa} p_{i}^{m_{i}(C)},
	\end{equation}
	where $m_{i}(C)$ denotes the number of $i$'s in the semistandard tableau $C$ and the normalization constant $Z_{c}^{(a)}=Z_{c}^{(a)}(\kappa,\mathbf{p})$ is given by 
	\begin{align}\label{eq:partition_schur}
	Z_{c}^{(a)}(\kappa,\mathbf{p})  = s_{(c^{a})}(p_{0},p_{1},\cdots,p_{\kappa}).
	\end{align}
\end{theorem}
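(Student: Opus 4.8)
The plan is to separate the three assertions — irreducibility, the product form of $\pi_c^{(a)}$, and the Schur-polynomial evaluation of $Z_c^{(a)}$ — with the stationarity verification as the analytic heart and the other two being, respectively, a combinatorial connectivity statement and a direct application of the tableau definition of Schur functions. First I would record the transition rule explicitly: from carrier state $C\in B_c^{(a)}$ the chain reads a box $x\in\{0,\dots,\kappa\}$ with probability $p_x$ and moves to $R_2(C,x)$, so the kernel is $P(C,C')=\sum_{x:\,R_2(C,x)=C'}p_x$. The one structural input I would extract from the combinatorial $R$ is \emph{content conservation}: writing $R(C,x)=(x',C')$, the defining identity $(B\leftarrow\textup{row}(C))=(C'\leftarrow\textup{row}(B'))$ with $B=x$ and $B'=x'$ single boxes, combined with the content-preserving nature of row insertion, forces $m_i(C)+\mathbf{1}(i=x)=m_i(C')+\mathbf{1}(i=x')$ for every color $i$. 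Writing $w(C)=\prod_{i=0}^{\kappa}p_i^{m_i(C)}$, this is exactly the weight identity
\[
w(C)\,p_x = w(C')\,p_{x'},\qquad R(C,x)=(x',C').
\]

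The main step is then to verify $\pi_c^{(a)}P=\pi_c^{(a)}$ directly for $\pi_c^{(a)}(C)=w(C)/Z_c^{(a)}$. Fixing a target state $C'$ and using the weight identity to rewrite each incoming term $w(C)p_x$ as $w(C')p_{x'}$,
\[
\sum_C w(C)\,P(C,C') = \sum_{(C,x):\,R_2(C,x)=C'} w(C)\,p_x = w(C')\sum_{(C,x):\,R_2(C,x)=C'} p_{x'},
\]
where $x'$ denotes the output box of $R(C,x)$. The remaining sum equals $1$: since the combinatorial $R$ is a bijection $B_c^{(a)}\times B_1^{(1)}\to B_1^{(1)}\times B_c^{(a)}$, the pairs $(C,x)$ with $R_2(C,x)=C'$ are precisely the preimages of $\{(x',C'):x'\in\{0,\dots,\kappa\}\}$, so $x'$ runs exactly once over each color and $\sum p_{x'}=\sum_{x'=0}^{\kappa}p_{x'}=1$. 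Hence $\sum_C w(C)P(C,C')=w(C')$, which is stationarity of the product measure.

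For the normalization, the sum $Z_c^{(a)}=\sum_{C\in B_c^{(a)}}\prod_i p_i^{m_i(C)}$ runs over all semistandard tableaux of rectangular shape $(c^a)$ in the alphabet $\{0,\dots,\kappa\}$; after re-indexing the $\kappa+1$ variables as $w_{i+1}=p_i$, this is by construction the tableau (monomial) expansion of the Schur polynomial, and invoking the classical equivalence between that expansion and the bialternant formula \eqref{qf} gives $Z_c^{(a)}=s_{(c^a)}(p_0,\dots,p_\kappa)$. It remains to settle irreducibility, from which uniqueness follows automatically because the state space $B_c^{(a)}$ is finite. I would obtain this from the explicit description of the single-box ($r=s=1$) combinatorial $R$ developed in Subsection \ref{subsection:pf_lemma_irreducibility}, showing that every state communicates — for instance by exhibiting input words that drive an arbitrary carrier to a fixed hub state and back — which is the content of Lemma \ref{lemma:carrier_irreducibility}. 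Once irreducibility is in hand, the finite chain has a unique stationary distribution, and it must be the full-support product measure just verified.

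The hard part is not the stationarity check, which is essentially forced once content conservation and the bijectivity of $R$ are recognized, but the irreducibility claim: it is the only place requiring genuine combinatorial understanding of how the single-box combinatorial $R$ moves the carrier through $B_c^{(a)}$, and I expect the real effort to be concentrated in making that connectivity argument precise (hence its treatment as the separate Lemma \ref{lemma:carrier_irreducibility}).
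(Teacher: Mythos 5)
Your proposal is correct and follows essentially the same route as the paper: the stationarity check rests on the same content-conservation identity $w(C)p_x = w(C')p_{x'}$ derived from \eqref{eq:CR_factorization} together with the bijectivity of $R$ (so that the output colors $x'$ range exactly once over $\{0,\dots,\kappa\}$ and the incoming weights sum to $w(C')$), the normalization is identified with $s_{(c^a)}$ via the tableau expansion of the Schur function, and irreducibility is delegated to the combinatorial connectivity argument of Lemma \ref{lemma:carrier_irreducibility} exactly as the paper does (the paper obtains single-carrier irreducibility by projecting the joint carrier chain of that lemma onto its first coordinate). No gaps.
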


In particular, we have 
\begin{align}
Z_{1}^{(1)}=s_{(1)}(p_{0},p_{1},\cdots,p_{\kappa}) = 
p_{0}+p_{1}+\cdots+p_{\kappa}=1
\end{align}
and $\pi_{1}^{(1)}=\mathbf{p}$. We note that the stationarity of $\pi_{c}^{(a)}$ has been established in the more general BBS in \cite[Prop. 3.2]{kuniba2018randomized}, where each site is occupied by a semistandard tableau of rectangular shape, not necessarily by a tableau of shape $(1\times 1)$ (a single ball). We include a proof of stationarity below for self-containedness. 
On the other hand, the irreducibility of the carrier process has not been addressed in \cite{kuniba2018randomized}. For the irreducibility statement in Theoerem \ref{thm:stationary_measure}, we prove a stronger statement (Lemma \ref{lemma:carrier_irreducibility}) for the `joint carrier process'.  

More precisely, for fixed integers $c\ge 1$ and $1\le a \le \kappa$, we jointly evolve the $B_{c}^{(a)}$- and $B_{c+1}^{(a)}$-carrier processes $(\Gamma_{c}^{(a)}(t))_{t\ge 0}$ and $(\Gamma_{c+1}^{(a)}(t))_{t\ge 0}$ over $X^{\mathbf{p}}$, respectively. Then the pair $(\Gamma_{c}^{(a)}(t), \Gamma_{c+1}^{(a)}(t))$ defines a Markov chain on the state space $\Omega:=B_{c}^{(a)}\times B_{c+1}^{(a)}$ with initial state $(U_{c}^{(a)},U_{c+1}^{(a)})$. Note that from \eqref{eq:def_YDs}, we can write
\begin{align}
\rho_{c+1}^{(a)}(X^{n,\mathbf{p}}) &= E_{c+1}^{(a)}(X^{n,\mathbf{p}}) - E_{c}^{(a)}(X^{n,\mathbf{p}}) 
\label{bak}\\
&=\sum_{k=1}^{n} \left( H(\Gamma_{c+1}^{(a)}(k-1),X^{\mathbf{p}}(k)) -  H(\Gamma_{c}^{(a)}(k-1),X^{\mathbf{p}}(k)) \right). \label{eq:rho_MC_functional}
\end{align}
Hence it is natural to keep track of the two carrier processes $\Gamma_{c}^{(a)}(t)$ and $\Gamma_{c+1}^{(a)}(t)$ at the same time.

Notice that since these two carrier processes are coupled through the joint evolution over $X^{\mathbf{p}}$, the joint carrier process is not necessarily irreducible over the entire state space $\Omega$. In Lemma \ref{lemma:carrier_irreducibility} below, we show that the joint process is irreducible on a smaller state space $\Omega_{0}\subseteq \Omega$ containing the `ground state' $(U_{c}^{(a)}, U_{c+1}^{(a)})$. This will also be crucial in establishing the large deviations principle for the row length (Theorem \ref{thm:row_concentration}) in Subsection \ref{subsection:LDP}. For its statement, we say a pair $(C_{1},C_{2})\in B_{c}^{(a)}\times B_{c+1}^{(a)}$ is \textit{jointly reachable} if 
\begin{align}
\PP\left( \text{$(\Gamma_{c}^{(a)}(t), \Gamma_{c+1}^{(a)}(t)) = (C_{1},C_{2})$ for some $t\ge 0$} \right)>0.
\end{align}

\begin{lemma}\label{lemma:carrier_irreducibility}
	Fix $c\ge 1$, $1\le a \le \kappa$, and let $(\Gamma_{c}^{(a)}(t), \Gamma_{c+1}^{(a)}(t))_{t\ge 0}$ be the joint carrier process over $X^{\mathbf{p}}$. Denote by $\Omega_{0}\subseteq \Omega$ the set of all jointly reachable states. Then the followings hold. 
	\begin{description}
		\item[(i)] For each $C\in B_{c}^{(a)}$, there exists $C'\in B_{c+1}^{(a)}$ such that $(C,C')\in \Omega_{0}$.
		\vspace{0.1cm}
		
		\item[(ii)] 	$\PP\big( \text{$(\Gamma_{c}^{(a)}(t), \Gamma_{c+1}^{(a)}(t))$ visits $(U_{c}^{(a)}, U_{c+1}^{(a)})$ infinitely often}\big)=1.$

		\vspace{0.1cm}
		\item[(iii)] $(\Gamma_{c}^{(a)}(t), \Gamma_{c+1}^{(a)}(t))_{t\ge 0}$ is irreducible on $\Omega_{0}$. 
	\end{description}
\end{lemma}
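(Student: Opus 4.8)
The plan is to reduce all three assertions to a single deterministic \emph{reset} (synchronization) property of the carrier dynamics and then harvest (i)--(iii) from it together with the stationarity already established in Theorem~\ref{thm:stationary_measure}. Viewing the single-box combinatorial $R$ as a deterministic automaton whose states are carrier tableaux and whose input alphabet is $\{0,1,\dots,\kappa\}$, the central claim I would isolate is the following. \textbf{Reset property:} there is a finite word $w^{*}\in\{0,1,\dots,\kappa\}^{*}$ such that, fed from an arbitrary joint state $(C_{1},C_{2})\in\Omega=B_{c}^{(a)}\times B_{c+1}^{(a)}$, the joint carrier ends at the ground state $(U_{c}^{(a)},U_{c+1}^{(a)})$. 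I would construct $w^{*}$ as a sufficiently long repetition of the canonical word $\mathrm{row}(U_{c+1}^{(a)})=(a-1)^{c+1}(a-2)^{c+1}\cdots 1^{c+1}\,0^{c+1}$, which supplies both carriers with an abundant, balanced stock of each background color $0,\dots,a-1$.

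To prove the reset property I would run the explicit $r=s=1$ algorithm for the combinatorial $R$ from Subsection~\ref{subsection:pf_lemma_irreducibility} and argue by a Lyapunov/contraction estimate. Since $R$ preserves content, feeding one period of the canonical word changes the carrier's content-sum $\Sigma(\Gamma)$ (the sum of all entries of $\Gamma$) by the sum of the fed letters minus the sum of the emitted letters; I would show this net change is nonpositive and vanishes exactly when the carrier already equals its ground tableau, using that $U_{c}^{(a)}$ (resp.\ $U_{c+1}^{(a)}$) is the unique semistandard tableau of its rectangular shape with minimal content-sum. Iterating the canonical word then strictly decreases $\Sigma$ off the ground state and, on the finite state space, reaches the unique minimizer in a bounded number of periods; choosing $w^{*}$ that many periods long synchronizes both widths at once. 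The delicate point, and the step I expect to be hardest, is verifying the strict decrease off the ground state, equivalently that the period map has no non-ground periodic orbit among equal-content-sum states; this is where the combinatorial details of the bumping in the $r=s=1$ algorithm enter, and it is the main obstacle.

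Granting the reset property, (ii) and (iii) are short. For (ii), partition $\mathbb{N}$ into consecutive blocks of length $|w^{*}|$; each block of $X^{\mathbf p}$ equals $w^{*}$ with probability $\prod_{x}p_{x}>0$ (all $p_{i}>0$), independently across blocks, so by the second Borel--Cantelli lemma infinitely many blocks equal $w^{*}$ almost surely. Whatever the joint state at the start of such a block, it is $(U_{c}^{(a)},U_{c+1}^{(a)})$ at its end, so the ground state is visited infinitely often almost surely. For (iii), observe that $\Omega_{0}$ is forward-closed (if $s\in\Omega_{0}$ and $s\to s'$ with positive probability, then the ground state reaches $s$ and hence $s'$, so $s'\in\Omega_{0}$), contains the ground state, and, by the reset property, every $s\in\Omega_{0}$ reaches the ground state with positive probability, while by definition of joint reachability the ground state reaches every $s\in\Omega_{0}$. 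Thus every state of $\Omega_{0}$ communicates with the ground state, so $\Omega_{0}$ is a single communicating class and the joint chain is irreducible on it.

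Finally (i) follows by combining the reset property with stationarity. The first coordinate of the joint process is exactly the single $B_{c}^{(a)}$-carrier process, so it suffices to show this chain reaches every $C\in B_{c}^{(a)}$ from $U_{c}^{(a)}$. The product measure $\pi_{c}^{(a)}$ has full support (as $p_{i}>0$) and is stationary, hence every state of $B_{c}^{(a)}$ is recurrent; since the reset word drives every state to $U_{c}^{(a)}$, each recurrent state leads to $U_{c}^{(a)}$ and therefore communicates with it, so the single carrier is irreducible and in particular $U_{c}^{(a)}$ leads to every $C$. Choosing an input word realizing $U_{c}^{(a)}\to C$ in the first coordinate and letting $C'$ be the resulting second coordinate gives $(C,C')\in\Omega_{0}$, which proves (i); the same reasoning upgrades the irreducibility assertion of Theorem~\ref{thm:stationary_measure}.
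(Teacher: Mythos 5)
Your architecture is essentially the paper's: everything is reduced to a deterministic \emph{reset word} that drives the joint carrier to a distinguished state with positive probability, after which (ii) follows from Borel--Cantelli, (iii) from communication with the ground state, and (i) from irreducibility of the first coordinate. Your derivations of (i)--(iii) \emph{from} the reset property are sound (and your block-wise Borel--Cantelli for (ii) is arguably cleaner than the paper's strong-Markov phrasing; your use of stationarity for (i) is legitimate, since the stationarity half of Theorem~\ref{thm:stationary_measure} is proved without this lemma). The genuine gap is that the reset property itself --- the entire content of the lemma --- is not proved. You reduce it to the claim that one period of $\mathrm{row}(U_{c+1}^{(a)})$ strictly decreases the content-sum off the ground state (no non-ground periodic orbit among equal-content-sum states), and you explicitly flag this as the main obstacle without supplying an argument. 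As submitted, the proof therefore rests on an unverified combinatorial assertion.

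The good news is that the gap is fillable along your own lines. Every fed letter lies in $\{0,\dots,a-1\}$, while the bottom-left entry of any $a$-row column-strict tableau is $\ge a-1$; so by Proposition~\ref{prop:LICI} every insertion is a column insertion, and each column bump displaces a letter $\ge$ the one inserted, whence each emitted letter is $\ge$ the fed letter and the content-sum is non-increasing. If the content-sum is preserved over a full period, then every single insertion must emit exactly its input, which forces each bump to replace a letter by an equal one; hence the tableau is pointwise fixed throughout the period and must contain every letter $0,\dots,a-1$ in every column, i.e.\ it equals $U_c^{(a)}$ (resp.\ $U_{c+1}^{(a)}$). Since the content-sum takes finitely many values, a bounded number of periods resets both coordinates. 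You should write this out; without it the lemma is not established. For comparison, the paper avoids the Lyapunov argument entirely by choosing a two-stage word whose effect is computed exactly from the algorithm $\tilde R$: feeding $\mathrm{row}'(V_{c+1}^{(a)})$ drives any carrier to the lowest tableau $V$, and then feeding $\mathrm{row}'(U_{c+1}^{(a)})$ carries $V$ to $U$ (equations \eqref{eq:column_bump_pf}--\eqref{eq:column_bump_pf2}); this yields the reset, and also the stronger statement $R_2(V_c^{(a)},\mathrm{row}'(C))=C$ used to reach an arbitrary $C$ in part (i) without appealing to recurrence.
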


The proof of Lemma \ref{lemma:carrier_irreducibility} is given in the following subsection. Below we prove Theorem \ref{thm:stationary_measure} assuming this lemma.

\begin{proof}[\textbf{Proof of Theorem \ref{thm:stationary_measure}}]
	Let $\mathtt{Y}_{t}=(\Gamma_{c}^{(a)}(t), \Gamma_{c+1}^{(a)}(t))_{t\ge 0}$ denote the joint carrier process over $X^{\mathbf{p}}$ and let $\Omega_{0}$ denote the set of all jointly reachable states as in Lemma \ref{lemma:carrier_irreducibility}. Note that we can obtain the $B_{c}^{(a)}$-carrier process over $X^{\mathbf{p}}$ by projecting the joint carrier process $\mathtt{Y}_{t}$ onto its first coordinate. Fix tableaux $C_{1},C_{2}\in B_{c}^{(a)}$. Then  by Lemma \ref{lemma:carrier_irreducibility} (i), there exists $C_{1}',C_{2}'\in B_{c+1}^{(a)}$ such that $(C_{i},C_{i}')\in \Omega_{0}$ for $i=1,2$. It follows that  
	\begin{align}
		\PP\big(\Gamma_{c}^{(a)}(t)=C_{1} \big) \ge \PP\big(\mathtt{Y}_{t}=(C_{1},C_{1}')\big) >0.
	\end{align}
	Hence by Lemma \ref{lemma:carrier_irreducibility} (iii), we have  
	\begin{align}
	&\PP\big(\text{$\Gamma_{c}^{(a)}(s)=C_{2}$ for some $s\ge t$} \,|\, \Gamma_{c}^{(a)}(t)=C_{1}\big) \\
	&\qquad \ge \PP\big( \text{$\mathtt{Y}_{s}=(C_{2},C_{2}')$ for some $s\ge t$} \,|\, \Gamma_{c}^{(a)}(t)=C_{1}\big)  \\
	&\qquad \ge \PP\big( \text{$\mathtt{Y}_{s}=(C_{2},C_{2}')$ for some $s\ge t$} ,\,  \Gamma_{c}^{(a)}(t)=C_{1}\big) \\
	&\qquad \ge \PP\big(\text{$\mathtt{Y}_{s}=(C_{2},C_{2}')$ for some $s\ge t$} ,\, \mathtt{Y}_{t}=(C_{1},C_{1}')\big) > 0.
	\end{align} 
	This shows that the carrier process $(\Gamma_{c}^{(a)}(t))_{t\ge 0}$ is irreducible on $B_{c}^{(a)}$.

	Next, we show $\pi_{c}^{(a)}$ is a stationary distribution for $(\Gamma_{c}^{(a)}(t))_{t\ge 0}$. For each semistandard tableau $T$ (not necessarily rectangular), denote $e^{\wt(T)} = \prod_{i=0}^{\kappa} p_{i}^{m_{i}(T)}$, where $m_{i}(T)$ denotes the total number of letter $i$ in $T$ as before. For any two rectangular tableaux $S,T$, we may denote $S\cdot T = (S\leftarrow \text{row}(T))$ (see (\ref{eq:CR_factorization}) and above). Since the total number of letter $i$ is preserved in each row insertion step, we have $m_{i}(S\cdot T) = m_{i}(S)+m_{i}(T)$. Now for any $(C,B)\in B_{c}^{(a)}\times B_{1}^{(1)}$ and $(B',C')\in B_{1}^{(1)}\times B_{c}^{(a)}$ such that $R(C,B) = (B',C')$, we have $C\cdot B = B'\cdot C'$ by definition so we have   
	\begin{align}
	e^{\wt(C)}e^{\wt(B)} = e^{\wt(C\cdot B)} = e^{\wt(B'\cdot C')} = e^{\wt(B')}e^{\wt(C')}. 
	\end{align} 
	Moreover, recall the combinatorial $R:B_{c}^{(a)}\times B_{1}^{(1)}\rightarrow B_{1}^{(1)}\times B_{c}^{(a)}$ is a bijection, so it also gives a bijection between $B_{1}^{(1)}\times \{ C'\}$ and its inverse image under $R$. Hence we have 
	\begin{align}
	\sum_{\substack{(C,B)\in B_{c}^{(a)}\times B_{1}^{(1)} \\ R_{2}(C,B)=C' }} e^{\wt(C)}e^{\wt(B)} & =  \sum_{\substack{(C,B)\in B_{c}^{(a)}\times B_{1}^{(1)} \\ R_{2}(C,B)=C' }} e^{\wt(B')}e^{\wt(C')} 
	= e^{\wt(C')} \sum_{B'\in B_{1}^{(1)}} e^{\wt(B')}.
	\end{align}
	Note that $e^{\wt(i)} = p_{i}$ for each $0\le i \le \kappa$, so the summation in the last expression equals 1. 
	Hence dividing both sides by the partition function $Z_{c}^{(a)}$ gives\footnote{Here
	and in what follows we will often write $i \in \{0,1,\ldots, \kappa\}$ to mean $\young(i) \in B^{(1)}_1$ and vice versa.} 
	\begin{align}
	\sum_{\substack{(C,i)\in B_{c}^{(a)}\times B_{1}^{(1)} \\ R_{2}(C,i)=C' }} \pi_{c}^{(a)}(C) p_{i}
	= \pi_{c}^{(a)}(C').
	\end{align}
	This shows that $\pi_{c}^{(a)}$ is a stationary distribution for the $B_{c}^{(a)}$-carrier process over $X^{\mathbf{p}}$. The uniqueness follows from irreducibility and the fact that the state space $B_{c}^{(a)}$ is finite. The formula (\ref{eq:partition_schur}) follows from an alternative definition of the Schur function as the generating sum of 
	$e^{\wt}$ over the semistandard tableaux.
\end{proof}

\subsection{An algorithm for combinatorial $R$ and proof of Lemma \ref{lemma:carrier_irreducibility}}
\label{subsection:pf_lemma_irreducibility}

In this subsection we prove Lemma \ref{lemma:carrier_irreducibility}. Recall that the combinatorial $R$  introduced in Section \ref{section:combinatorial_R}, which we denote by $R:B_{c}^{(a)}\times B_{1}^{(1)}\rightarrow B_{1}^{(1)}\times B_{c}^{(a)}$, is defined implicitly through the factorization condition \eqref{eq:CR_factorization}. This definition is sufficient to derive the stationary distribution \eqref{eq:stationary_measure_product_formula} for the carrier process, but it is not suited to show the joint irreducibility stated in Lemma \ref{lemma:carrier_irreducibility}.  

To overcome this difficulty, we give an explicit map $\tilde{R}:B_{c}^{(a)}\times B_{1}^{(1)}\rightarrow B_{1}^{(1)}\times B_{c}^{(a)}$ by a simple algorithm, and show that this alternative map agrees with the combinatorial $R$ (Proposition \ref{prop:LICI}). We remark that a similar but more involved algorithm to compute combinatorial $R$ in the general case is known \cite[p.55]{okado2007part}. Also, in the special case of the combinatorial $R$ acting on $B_{c}^{(a)}\times B_{s}^{(r)}$ with $a=r=1$ or $c=s=1$, a diagramatic computation rule was given in \cite{nakayashiki1997kostka}. For our purpose, we need an algorithm for general $c,a\ge 1$ and $s=r=1$.

Our algorithm consists of two essential steps, namely, the \textit{reverse bumping} and \textit{column insertion}. The former is the reverse of the usual Schensted row insertion $(T\leftarrow x)$ from the bottom to the top row (See \cite[p.8]{fulton1997young}.) The latter is the usual Schensted column insertion from the first to the last column. (See \cite[p.186]{fulton1997young}.) More precisely, for each $c,a\ge 1$, we define a map $\tilde{R}:B_{c}^{(a)}\times B_{1}^{(1)}\rightarrow B_{1}^{(1)}\times B_{c}^{(a)}$, $(T,x)\mapsto (y,S)$, as below. 
Let $z$ be the number inscribed in the bottom left box in the tableau $T$. 
\begin{description}
	\item{(i) (\textit{reverse bumping})} Case $x>z$. 
	Replace the rightmost element $x'$ such that $x'<x$ in the bottom row of $T$  by $x$. 
	Replace the rightmost element $x''$ such that $x''<x'$ in the row above 
	(row $(c-1)$) of $T$ by $x'$. Perform the similar replacements all the way to the top row. The resulting $(a\times c)$ tableau is $S$, and the last replaced letter from the top row is $y$. The algorithm then halts. 
	\vspace{0.1cm}
	\item{(ii) (\textit{column insertion})} Case $x\le z$. Replace the topmost element $x'$ such that $x'\ge x$ in the first column of $T$ by $x$. 
	Replace the topmost element $x''$ such that $x''\ge x'$ in the second column of $T$ by $x'$. Perform the similar replacements all the way to the last column.
	The resulting $(a\times c)$ tableau is $S$, and the last replaced letter from the rightmost column is $y$. The algorithm then halts. 
\end{description}

\begin{example}
	We give four instances of the map $\tilde{R}$. In this example we use $\Rightarrow$ and $\Downarrow$ to indicate the intermediate steps of the reverse bumping and column insertion for single row and column, respectively\footnote{
The usage of the symbols $\Rightarrow$ and $\Downarrow$ here is not standard.}.
	%\iffalse
	\begin{align}
	&{\small\left(\young(011,234),\, \young(3) \right) \mapsto \left( \raisebox{-0.61 em}{\text{$3 \Rightarrow$} } \young(011,234) \right) \mapsto  \left( \raisebox{0.61 em}{\text{$2 \Rightarrow$} } \young(011,334) \right) \mapsto \left( \young(1),\, \young(012,334) \right)}, \label{eq:ex_inverted_insertion1}\\
	&{\small\left(\young(011,234),\, \young(2) \right) \mapsto \begin{pmatrix} \begin{matrix} 2 \\ \Downarrow \end{matrix} \text{\hspace{0.68cm}} \\ \young(011,234) \end{pmatrix}  \mapsto 
		\begin{pmatrix} \begin{matrix} 2 \\ \Downarrow \end{matrix} \text{\hspace{0cm}} \\ \young(011,234) \end{pmatrix} \mapsto 
		\begin{pmatrix} \begin{matrix} 3\\ \Downarrow \end{matrix} \text{\hspace{-0.68cm}} \\ \young(011,224) \end{pmatrix} \mapsto 
		\left(\young(4),\,\young(011,223)\right)}, \label{eq:ex_inverted_insertion2} \\
	& {\small \left( \young(011,224,445), \, \young(3) \right) \mapsto 
		\begin{pmatrix} \begin{matrix} 3 \\ \Downarrow \end{matrix} \text{\hspace{0.68cm}} \\ \young(011,224,445) \end{pmatrix} \mapsto 
		\begin{pmatrix} \begin{matrix} 4 \\ \Downarrow \end{matrix} \text{\hspace{0cm}} \\ \young(011,224,345) \end{pmatrix} \mapsto 
		\begin{pmatrix} \begin{matrix} 4 \\ \Downarrow \end{matrix} \text{\hspace{-0.68cm}} \\ \young(011,224,345) \end{pmatrix} \mapsto 
		\left(\young(4),\, \young(011,224,345) \right)},
	\label{eq:ex_inverted_insertion3}\\
	&{\small	 \left( \young(011,224,445), \, \young(0) \right) \mapsto 
		\begin{pmatrix} \begin{matrix} 0 \\ \Downarrow \end{matrix} \text{\hspace{0.68cm}} \\ \young(011,224,445) \end{pmatrix} \mapsto 
		\begin{pmatrix} \begin{matrix} 0 \\ \Downarrow \end{matrix} \text{\hspace{0cm}} \\ \young(011,224,445) \end{pmatrix} \mapsto 
		\begin{pmatrix} \begin{matrix} 1 \\ \Downarrow \end{matrix} \text{\hspace{-0.68cm}} \\ \young(001,224,445) \end{pmatrix} \mapsto 
		\left( \young(1),\, \young(001,224,445) \right)}.  \label{eq:ex_inverted_insertion4}
	\end{align}
	%\fi
	Observe from example (\ref{eq:ex_inverted_insertion4}) that if $0$ is inserted, then the algorithm always halts by sliding the top row to the right by a single cell and the $(1,1)$ cell is filled with a new $0$. $\hfill\blacktriangle$
\end{example}

Now we show that our algorithm $\tilde{R}$ agrees with the combinatorial $R$ defined at \eqref{eq:CR_factorization}.

\begin{proposition} \label{prop:LICI}
	The map $\tilde{R}:B_{c}^{(a)}\times B_{1}^{(1)}\rightarrow B_{1}^{(1)}\times B_{c}^{(a)}$ defined above agrees with the combinatorial $R$ acting on $B_{c}^{(a)}\times B_{1}^{(1)}$.  Furthermore, the local energy function $H$ is given by
	\begin{equation}\label{eq:H_characterization_ascent}
	H(T,x) = \mathbf{1}( \text{$x>$ bottom left element in $T$} ).
	\end{equation}  
\end{proposition}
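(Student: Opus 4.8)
The plan is to use that the combinatorial $R$ is the \emph{unique} map $B_c^{(a)}\times B_1^{(1)}\to B_1^{(1)}\times B_c^{(a)}$ obeying the factorization \eqref{eq:CR_factorization}; hence it suffices to check that, writing $\tilde R(T,x)=(y,S)$, the tableau $S$ is a genuine element of $B_c^{(a)}$ and the identity $(x\leftarrow \textup{row}(T))=(S\leftarrow y)$ holds. The energy statement then follows from the shape of $(x\leftarrow\textup{row}(T))$, since with $r=1$ we have $\max(a,r)=a$, so by \eqref{eq:def_H} the quantity $H(T,x)$ equals the number of boxes of $(x\leftarrow\textup{row}(T))$ lying strictly below row $a$.

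The key preliminary step is to rewrite the left-hand side of the factorization as a single column insertion. Using the standard facts that $(\emptyset\leftarrow\textup{row}(T))=T$ and that prepending a letter to a word corresponds to Schensted column insertion of that letter into the insertion tableau, I obtain $(x\leftarrow\textup{row}(T))=P(x\,\textup{row}(T))=(x\to T)$, where $(x\to T)$ denotes column-inserting $x$ into $T$. Now $T$ is an $a\times c$ rectangle, so $(x\to T)$ adds exactly one box, and the only two addable corners of a rectangle are $(a+1,1)$ and $(1,c+1)$. Inspecting column $1$, whose largest entry is the bottom-left entry $z$, shows that the new box lands at $(a+1,1)$, with value $x$, precisely when $x>z$, and at $(1,c+1)$ otherwise. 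This dichotomy is exactly the case split of the algorithm.

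Next I match each branch. When $x>z$ we have $(x\leftarrow\textup{row}(T))=T^{+}$, the tableau obtained by adjoining $x$ below the first column of $T$; one checks directly that the \emph{reverse bumping} subroutine is nothing but reverse Schensted row insertion extracting the corner $(a+1,1)$ of $T^{+}$, so it returns $(y,S)$ with $S$ the $a\times c$ rectangle and $(S\leftarrow y)=T^{+}$, as required. When $x\le z$, the \emph{column insertion} subroutine is ordinary column insertion truncated at the last column, reporting the letter bumped out of column $c$ as $y$; since the genuine new box sits at $(1,c+1)$, the first $c$ columns form an $a\times c$ tableau $S$ and reinserting $y$ (which is weakly larger than every entry of the first row of $S$) reproduces $(x\to T)$. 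In both branches $S\in B_c^{(a)}$ and the factorization holds, so $\tilde R=R$. Finally, in the first branch the single box below row $a$ gives $H(T,x)=1$, and in the second branch there is no such box, giving $H(T,x)=0$; this is \eqref{eq:H_characterization_ascent}.

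I expect the main obstacle to be the careful bookkeeping that ties the two combinatorial subroutines to (reverse) Schensted insertion with the correct weak/strict inequality conventions, together with the verification that in the $x\le z$ branch the column-insertion chain really traverses all $c$ columns before depositing the new box. The ``only two addable corners'' observation is what makes the latter transparent: a prematurely created bottom box would have to sit at some $(a+1,j)$ with $j\ge 2$, which is not an addable cell of the rectangle, so the chain cannot terminate inside columns $1,\dots,c$ and must exit at $(1,c+1)$, keeping every column at height $a$.
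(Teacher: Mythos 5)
Your proof is correct and follows essentially the same route as the paper's: both reduce the factorization condition to the identity $(x\leftarrow \textup{row}(T)) = (x\to T)$ via row/column-insertion duality, split into the cases $x>z$ and $x\le z$ according to where the single new box of $(x\to T)$ lands on the rectangle, and identify the two branches of $\tilde R$ with reverse row insertion at $(a+1,1)$ and with column insertion exiting at $(1,c+1)$, respectively, with the energy formula read off from the shape. Your explicit ``only two addable corners of a rectangle'' observation is a nice way to make the second branch transparent, but it is the same argument.
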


\begin{proof}
	Suppose $R: (T,x) \mapsto (y,S)$.
	Then by definition and duality between row and column insertion \cite[Appendix A.2]{fulton1997young}, we have 	
	\begin{align}\label{syxt}
	(S \leftarrow y) = (x \leftarrow \mathrm{row}(T)) = (x \rightarrow T),
	\end{align}
	where the first equality is due to (\ref{eq:CR_factorization}) and $\rightarrow$ denotes the 
	column insertion \cite[p186]{fulton1997young}. 
	The second equality follows from the equivalence of the two constructions of 
	product tableaux based on row and column insertions. 
	Compare the bottom formula on \cite[p11]{fulton1997young} and the top one on \cite[p187]{fulton1997young}.
	Let $z$ be the bottom left letter in the tableau $T$ as in the above algorithm (i) and (ii).
	(I) Suppose $x>z$.
	Then $(x\rightarrow T)$ is obtained by attaching a single box containing $x$ 
	below the first column of $T$.
	To find $(y,S)$ from such $(x\rightarrow T)$ by the postulate (\ref{syxt}) is done 
	exactly by the reverse bumping procedure in (i).
	(II) Suppose $x\le z$. Then $(x \rightarrow T)$ has the shape of a $(a\times c)$ tableau with a single box attached to the right of its first row. Let $S'$ be its left $(a\times c)$ part and $y'$ be the remaining single box. Since $y'$ the largest in the first row of $(x\rightarrow T)$, it follows that $(S'\leftarrow y')=(x\rightarrow T)$. By the uniqueness, it follows that $S'=S$ and $y'=y$. Lastly, from (\ref{eq:def_H}) we have $H(T,x)=1$ for case (I) and $H(T,x)=0$ for case (II), proving (\ref{eq:H_characterization_ascent}).
\end{proof}

Now we are ready to prove Lemma \ref{lemma:carrier_irreducibility}.

\begin{proof}[\textbf{Proof of Lemma \ref{lemma:carrier_irreducibility}}.]
	
	Denote by $R_{2}$ the second coordinate of combinatorial $R$ as defined at \eqref{eq:CR_factorization}. For a tableau $T \in B^{(a)}_c$ and its row word $\mathrm{row}(T) = t_1t_2\cdots t_{ac}$,
	we denote the reverse row word by $\mathrm{row}'(T) = t_{ac}\cdots t_2t_1$.
	For any tableau $C\in B_{c}^{(a)}$ 
	and a word $x_{1}x_{2}\cdots x_{r}$, $0\le x_{i}\le \kappa$, define 
	\begin{equation}
	R_{2}(C, x_{1}x_{2}\cdots x_{r}) := 	R_{2}(\cdots R_{2}(R_{2}(C,x_{1}),x_{2})\cdots ,x_{r}) \in B_{c}^{(a)}. 
	\end{equation}
	For given two tableaux $C,C'\in B_{c}^{(a)}$, say $C$ reaches $C'$ if
	\begin{equation}\label{r2cc}
	R_{2}(C, x_{1}x_{2}\cdots x_{k}) = C'
	\end{equation}
	for some $x_{i}\in \{0,1, \cdots, \kappa \}$, $1\le i \le r$. Lastly, let $V_{c}^{(a)}$ be the `lowest tableau', whose entries in row $a-i$ are $(\kappa-i)$ for all $0\le i < a$.

	Using the realization of $R$ by the reverse bumping and the column insertion (Proposition \ref{prop:LICI}), it is not hard to see
	\begin{align}\label{eq:column_bump_pf}
	R_2(C,\mathrm{row}'(V^{(a)}_c)) = V^{(a)}_c, \qquad
	R_2(V^{(a)}_c, \mathrm{row}'(C)) = C
	\end{align}
	 for any $C \in B^{(a)}_c$. 
	See below for an illustration when $\kappa=5$, where we show $\Rightarrow$ and $\Downarrow$ only for the first step of successive reverse bumping and column insertion for inserting each letter.
	%\iffalse
	{\small 
		\begin{align}
		&\begin{pmatrix} \begin{matrix} 3 \\ \Downarrow \end{matrix} \text{\hspace{0.34cm}} \\ \young(02,23,34) \end{pmatrix} \mapsto 
		\begin{pmatrix} \begin{matrix} 3 \\ \Downarrow \end{matrix} \text{\hspace{0.34cm}} \\ \young(02,23,34) \end{pmatrix} \mapsto 
		\left( \raisebox{-0.35cm}{\text{$4 \Rightarrow$} } \young(02,23,34) \right) \mapsto
		\begin{pmatrix} \begin{matrix} 4\\ \Downarrow \end{matrix} \text{\hspace{0.34cm}} \\ \young(22,33,44) \end{pmatrix} \mapsto 
		\left( \raisebox{-0.35 cm}{\text{$5 \Rightarrow$} } \young(22,33,44) \right) \mapsto
		\left( \raisebox{-0.35 cm}{\text{$5 \Rightarrow$} } \young(23,34,45) \right) \mapsto \young(33,44,55) \nonumber \\
		&\begin{pmatrix} \begin{matrix} 2 \\ \Downarrow \end{matrix} \text{\hspace{0.34cm}} \\ \young(33,44,55) \end{pmatrix} \mapsto 
		\begin{pmatrix} \begin{matrix} 0 \\ \Downarrow \end{matrix} \text{\hspace{0.34cm}} \\ \young(23,44,55) \end{pmatrix} \mapsto 
		\begin{pmatrix} \begin{matrix} 3 \\ \Downarrow \end{matrix} \text{\hspace{0.34cm}} \\ \young(02,44,55) \end{pmatrix} \mapsto 
		\begin{pmatrix} \begin{matrix} 2 \\ \Downarrow \end{matrix} \text{\hspace{0.34cm}} \\ \young(02,34,55) \end{pmatrix} \mapsto 
		\begin{pmatrix} \begin{matrix} 4\\ \Downarrow \end{matrix} \text{\hspace{0.34cm}} \\ \young(02,23,55) \end{pmatrix} \mapsto 
		\begin{pmatrix} \begin{matrix} 3 \\ \Downarrow \end{matrix} \text{\hspace{0.34cm}} \\ \young(02,23,45) \end{pmatrix} \mapsto 
		\young(02,23,34) \label{eq:ex_pf_irreducibility}
		\end{align}
	}
	%\fi
	Now we show the assertion. Let $\mathtt{Y}_{t}=(\Gamma_{c}^{(a)}(t),\Gamma_{c+1}^{(a)}(t))$ be the joint carrier process over $X^{\mathbf{p}}$. To show (i), fix a tableau $C\in B_{c}^{(a)}$. According to \eqref{eq:column_bump_pf}, $U_{c}^{(a)}$ reaches $V_{c}^{(a)}$ and $V_{c}^{(a)}$ reaches $C$. More precisely, if we let $x_1\cdots x_k$ be the concatenation $\mathrm{row}'(V^{(a)}_c)\cdot \mathrm{row}'(C)$, then 
	\begin{align}
		R_{2}(U_{c}^{(a)}, x_{1}x_{2}\cdots x_{k}) = C. 
	\end{align}
	Denote $C'= R_{2}(U_{c+1}^{(a)}, x_{1}x_{2}\cdots x_{k}) \in B_{c+1}^{(a)}$. Then by the independence in $X^{\mathbf{p}}$ and since we are assuming $\mathbf{p}$ has strictly positive coordinates, it follows that 
	\begin{align}
	\PP(\mathtt{Y}_{t} = (C,C')) \ge \PP(X^{\mathbf{p}}(1)=x_{1},\cdots, X^{\mathbf{p}}(k)=x_{k}) = p_{x_{1}} \cdots p_{x_{k}} >0.
	\end{align}
	This shows (i). 
	
	For (ii), by the strong Markov property, it suffices to show that $\PP(\mathtt{Y}_{t}=(U_{c}^{(a)}, U_{c+1}^{(a)} ))>0$ for some $t\ge 1$. To this end, we apply \eqref{eq:column_bump_pf} with $c$ replaced by $c+1$ and $C=U_{c+1}^{(a)}$. This implies  
	\begin{align}
		R_{2}(U_{c+1}^{(a)}, \mathrm{row}'(V^{(a)}_{c+1})\cdot \mathrm{row}'(U_{c+1}^{(a)})) = U_{c+1}^{(a)}.
	\end{align} 
	On the other hand, using Proposition \ref{prop:LICI}, it is easy to see that 
	\begin{align}\label{eq:column_bump_pf2}
	R_2(U_{c}^{(a)},\mathrm{row}'(V^{(a)}_{c+1})) = V^{(a)}_{c}, \qquad
	R_2(V^{(a)}_{c}, \mathrm{row}'(U_{c+1}^{(a)})) = U_{c}^{(a)}.
	\end{align}
	This yields 
	\begin{align}
	R_{2}(U_{c}^{(a)}, \mathrm{row}'(V^{(a)}_{c+1})\cdot \mathrm{row}'(U_{c+1}^{(a)})) = U_{c}^{(a)}.
	\end{align}  
	Thus using a similar argument, we get $\PP(\mathtt{Y}_{2(c+1)}= (U_{c}^{(a)}, U_{c+1}^{(a)}) )>0$ as desired. 
	
	Lastly, (iii) follows immediately from (ii). By definition, $\Omega_{0}\subseteq \Omega$ consists of all jointly reachable states from $(U_{c}^{(a)}, U_{c+1}^{(a)})$. On the other hand, if $\mathtt{Y}_{t}=(C,C')$ for some $t\ge 0$, then by (ii), $\mathtt{Y}_{s}=(U_{c}^{(a)},U_{c+1}^{(a)})$ almost surely for some $s\ge t$. This shows the assertion. 
\end{proof}

\vspace{0.2cm}

\section{Proof of limit theorems for the rows}
\label{section:proof of main results}

In this section, we prove limit theorems for the rows: (SLLN) Theorem  \ref{thm:SLLN_rows} (i), (LDP) Theorem \ref{thm:row_concentration}, and (FCLT and persistence) Theorem \ref{thm:FCLT_persistence}. 

\vspace{0.2cm}
\subsection{SLLN for the energy matrix and the rows}

In the previous section, we have seen that the carrier process $\Gamma_{c}^{(a)}=(\Gamma(x))_{x\ge 0}$ over $X^{\mathbf{p}}$ defines an irreducible Markov chain on the finite state space $B_{c}^{(a)}$. In addition, the infinite i.i.d. basic $\kappa$-color BBS configuration $X^{\mathbf{p}}$ also defines a Markov chain on $B_{1}^{(1)}$, which is irreducible since $X^{\mathbf{p}}(x)$ is i.i.d. over $x$ and $\mathbf{p}$ is positive on every element of $B_{1}^{(1)}$. Then it follows that the pair $\mathtt{X}_{t}=(\Gamma(t-1),X^{\mathbf{p}}(t))$ defines an irreducible Markov chain on finite state space $B_{c}^{(a)}\times B_{1}^{(1)}$ with unique stationary distribution $\pi_{c}^{(a)}\otimes \mathbf{p}$, $(C,x)\mapsto \pi_{c}^{(a)}(C) \mathbf{p}(x)$. Note that its transition kernel $P$ is given by 
\begin{equation}\label{eq:kernel_carrier_box_MC}
P( (C, x), (C',x') ) = \mathbf{1}(C'=R_{2}(C,x)) \,\mathbf{p}(x'),
\end{equation}
where $R_{2}(C,x)\in B_{c}^{(a)}$ denotes the second component of the image $R(C,x)$ of the combinatorial $R$.

One of the key observation in the present paper is that the $(c,a)$-entry $E_{c}^{(a)}(X^{n,\mathbf{p}})$ of the energy matrix $E(X^{n,\mathbf{p}})$, which is defined by \eqref{eq:def_row_transfer_energy}, can be viewed as a Markov additive functional with respect to the Markov chain $\mathtt{X}_{t}=(\Gamma(t-1),X^{\mathbf{p}}(t))$ and the local energy $H$ defined at \eqref{eq:def_H} as the functional. This chain has also been introduced in a recent work \cite{kuniba2018randomized} in the more general class of BBS and the stationary distribution corresponding to $\pi_{c}^{(a)}$ in Theorem \ref{thm:stationary_measure} is obtained. However, irreducibility of the Markov chain and uniqueness of the stationary distribution has not been addressed in the general case. These properties are crucial in order to apply various limit theorems and large deviations principle to the energy matrix. 

With Theorem \ref{thm:stationary_measure} at hand, we can apply standard limit theorems for additive functionals of Markov chains for the row transfer matrix energy $E_{c}^{(a)}(X^{n,\mathbf{p}})$. Firstly in this section, we prove the strong law of large number (SLLN) for the energy matrix $E(X^{n,\mathbf{p}})$.

\begin{theorem}[SLLN for the energy matrix]\label{thm:SLLN_energy}
	Consider the basic $\kappa$-color BBS initialized at $X^{n,\mathbf{p}}$. For each integers $c\ge 1$ and $1\le a \le \kappa$, define a constant $\eps_{c}^{(a)}=\eps_{c}^{(a)}(\kappa,\mathbf{p})$ by 
	\begin{align}\label{eq:formula_stationary_energy}
	\eps_{c}^{(a)} = \frac{1}{Z_{c}^{(a)}} \sum_{C\in B_{c}^{(a)}} \prod_{j=0}^{\kappa} p_{j}^{m_{j}(C)} \left(\sum_{C(a,1)<i\le \kappa}p_{i}\right),
	\end{align}
	where $Z_{c}^{(a)}$ and $m_{j}(C)$ are as defined in Theorem \ref{thm:stationary_measure}, and $C(a,1)$ denotes the bottom left entry of the semistandard tableau $C$. Then $\eps_{c}^{(a)}\in (0,1)$ and almost surely as $n\rightarrow \infty$,
	\begin{equation}\label{eq:thm1_SLLN}
	n^{-1} E_{c}^{(a)}(X^{n,\mathbf{p}}) \rightarrow \eps_{c}^{(a)}.
	\end{equation} 
\end{theorem}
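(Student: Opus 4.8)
The plan is to recognize $E_{c}^{(a)}(X^{n,\mathbf{p}})$ as an additive functional of the irreducible finite-state Markov chain $\mathtt{X}_{t}=(\Gamma(t-1),X^{\mathbf{p}}(t))$ and then invoke the ergodic theorem. First I would truncate the infinite sum \eqref{eq:def_row_transfer_energy}. Writing $(\Gamma(x))_{x\ge 0}$ for the $B_{c}^{(a)}$-carrier over $X^{\mathbf{p}}$, it coincides with the carrier over $X^{n,\mathbf{p}}$ through step $n$ (the two configurations agree on $[1,n]$ and $\Gamma(x)$ depends only on $X(1),\dots,X(x)$), while for $x\ge n$ the carrier reads only empty boxes $X^{n,\mathbf{p}}(x+1)=0$. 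By the ascent characterization \eqref{eq:H_characterization_ascent} of Proposition \ref{prop:LICI}, $H(T,0)=\mathbf{1}(0>T(a,1))=0$ for every $T$ (since $T(a,1)\ge 0$), so all tail terms vanish and
\[
E_{c}^{(a)}(X^{n,\mathbf{p}}) = \sum_{k=1}^{n} H(\Gamma(k-1),X^{\mathbf{p}}(k)) = \sum_{k=1}^{n} f(\mathtt{X}_{k}),
\]
where $f(C,x):=H(C,x)$ is a bounded functional on $B_{c}^{(a)}\times B_{1}^{(1)}$.

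By Theorem \ref{thm:stationary_measure}, the chain $\mathtt{X}_{t}$ is irreducible on the finite state space $B_{c}^{(a)}\times B_{1}^{(1)}$ with unique stationary distribution $\pi_{c}^{(a)}\otimes \mathbf{p}$. The strong law of large numbers for additive functionals of such a chain (the Markov chain ergodic theorem, valid for any initial state, here $\mathtt{X}_{1}=(U_{c}^{(a)},X^{\mathbf{p}}(1))$) then yields, almost surely,
\[
n^{-1} E_{c}^{(a)}(X^{n,\mathbf{p}}) = n^{-1}\sum_{k=1}^{n} f(\mathtt{X}_{k}) \longrightarrow \EE_{\pi_{c}^{(a)}\otimes \mathbf{p}}[H].
\]

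It remains to identify and bound the limit. Using $H(C,x)=\mathbf{1}(x>C(a,1))$ and summing $x$ against $\mathbf{p}$,
\[
\EE_{\pi_{c}^{(a)}\otimes \mathbf{p}}[H] = \sum_{C\in B_{c}^{(a)}} \pi_{c}^{(a)}(C) \sum_{C(a,1)<i\le \kappa} p_{i},
\]
which, after inserting the product formula \eqref{eq:stationary_measure_product_formula} for $\pi_{c}^{(a)}$, is exactly $\eps_{c}^{(a)}$ as defined in \eqref{eq:formula_stationary_energy}. For the bounds: since $C(a,1)\ge 0$ for every $C$, the inner sum is at most $\sum_{i\ge 1}p_{i}=1-p_{0}<1$, so $\eps_{c}^{(a)}\le 1-p_{0}<1$; and evaluating at the initial tableau $U_{c}^{(a)}$, whose bottom-left entry equals $a-1<\kappa$, the inner sum $\sum_{i=a}^{\kappa}p_{i}\ge p_{\kappa}>0$ together with $\pi_{c}^{(a)}(U_{c}^{(a)})>0$ forces $\eps_{c}^{(a)}>0$. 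Hence $\eps_{c}^{(a)}\in(0,1)$, completing the argument.

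The conceptually substantive inputs here—irreducibility and the explicit product-form stationary measure—are already furnished by Theorem \ref{thm:stationary_measure} (via Lemma \ref{lemma:carrier_irreducibility}), so I do not expect a genuine obstacle in this proof. The one point requiring care is the clean truncation of the infinite energy sum to a finite Markov additive functional, which rests entirely on the ascent identity $H(T,0)=0$; once that is in place, the convergence is the standard finite-state ergodic theorem and the value of the limit is a bookkeeping match with the definition of $\eps_{c}^{(a)}$.
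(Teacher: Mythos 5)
Your proposal is correct and follows essentially the same route as the paper: identify $E_{c}^{(a)}(X^{n,\mathbf{p}})$ as an additive functional of the irreducible finite-state chain $\mathtt{X}_{t}=(\Gamma(t-1),X^{\mathbf{p}}(t))$, apply the Markov chain ergodic theorem using Theorem \ref{thm:stationary_measure}, and evaluate the limit via the ascent formula for $H$ from Proposition \ref{prop:LICI}. Your explicit truncation of the infinite sum via $H(T,0)=0$ and your direct bounds $0<\eps_{c}^{(a)}\le 1-p_{0}<1$ are slightly more detailed than the paper's (which simply notes $H(C,0)=0$ and $H(U_{c}^{(a)},\kappa)=1$ together with positivity of $\pi_{c}^{(a)}\otimes\mathbf{p}$), but the substance is identical.
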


\begin{proof}
	Fix integers $a,c\ge 1$ and $\kappa\ge a$. Let $\eps_{c}^{(a)}$ be as defined at (\ref{eq:formula_stationary_energy}) and $H$ be the local energy function defined at \eqref{eq:def_H}. Consider the Markov chain $\mathtt{X}_{t} = (\Gamma(t-1),X^{\mathbf{p}}(t))$ on the finite state space $\Omega = B_{c}^{(a)} \times B_{1}^{(1)}$, where $\Gamma_{c}^{(a)}=(\Gamma(t))_{t\ge 0}$ is the carrier process over $X^{\mathbf{p}}$. Let $P$ be the transition kernel of this chain given in (\ref{eq:kernel_carrier_box_MC}). This chain is irreducible with unique stationary distribution $\pi=\pi_{c}^{(a)}\otimes \mathbf{p}$ by Theorem \ref{thm:stationary_measure}. Also, according to \eqref{eq:def_row_transfer_energy}, we can write $ E_{c}^{(a)}(\mathtt{X}^{n,\mathbf{p}}) = \sum_{k=1}^{n}H(\mathtt{X}_{t})$. It is well-known that a finite-state irreducible Markov chain is ergodic. So by the Markov chain ergodic theorem (e.g., \cite[Theorem 7.2.1]{Durrett}), almost surely, 
	\begin{equation}\label{eq:SLLN_pf}
	\lim_{n\rightarrow \infty} n^{-1} E_{c}^{(a)}(X^{n,\mathbf{p}}) = \mathbb{E}_{\pi_{c}^{(a)}\otimes \mathbf{p}}[H(C,x)].
	\end{equation}
	Furthermore, by Theorem \ref{thm:stationary_measure} and Proposition \ref{prop:LICI}, we have 
	\begin{align}
	\mathbb{E}_{\pi_{c}^{(a)}\otimes \mathbf{p}}[H(C,x)] &= \frac{1}{Z_{c}^{(a)}}\sum_{C\in B_{c}^{(a)}} \sum_{0\le i \le \kappa}\mathbf{1}(C(a,1)<i) p_{i} \prod_{j=0}^{\kappa} p_{j}^{m_{j}(C)}=\eps_{c}^{(a)}.
	\end{align}
	Lastly, observe that $H(C,0)=0$ for all $C\in B_{c}^{(a)}$ and $H(U_{c}^{(a)}, \kappa ) = 1$ due to Proposition \ref{prop:LICI}. Since $\pi_{c}^{(a)}\otimes \mathbf{p}$ is positive at every element in $B_{c}^{(a)}\times B_{1}^{(1)}$, this shows $\eps_{c}^{(a)}\in (0,1)$.  
\end{proof}

The quantity $\eps_{c}^{(a)}$ in 
(\ref{eq:formula_stationary_energy})  can be written as a ratio of Schur polynomials (see \eqref{qf}):
	\begin{align}
	\eps_{c}^{(a)} = \frac{s_{(c^{a},1)}(p_{0},\cdots,p_{\kappa}) }{s_{(c^{a})}(p_{0},\cdots,p_{\kappa})},
	\end{align}
	where $(c^{a},1)$ denotes the partition $(c,\cdots, c, 1)$ with $c$ repeated $a$ times. (See Example 3.3 and equation (22) in \cite{kuniba2018randomized} for a more general result). 

Now we derive Theorem \ref{thm:SLLN_rows} (i).

\begin{proof}[\textbf{Proof of Theorem \ref{thm:SLLN_rows} (i)}.]
	According to Theorem \ref{thm:SLLN_energy} 
	and the relation \eqref{eq:def_YDs}, almost surely,
	\begin{align}\label{eq:rescaled_row_lim}
		\lim_{n\rightarrow \infty} n^{-1}\rho_{i}^{(a)}(X^{n,\mathbf{p}})=  \eps_{i}^{(a)} - \eps_{i-1}^{(a)},
	\end{align}
	where we take $\eps_{0}^{(a)}=0$. Then we have 
	\begin{align}
	\eps_{i}^{(a)} - \eps_{i-1}^{(a)} &= \frac{s_{(i^{a},1)}(p_{0},\cdots,p_{\kappa}) }{s_{(i^{a})}(p_{0},\cdots,p_{\kappa})} -\frac{s_{((i-1)^{a},1)}(p_{0},\cdots,p_{\kappa}) }{s_{((i-1)^{a})}(p_{0},\cdots,p_{\kappa})} \\
	&=  \frac{s_{((i-1)^{a-1})}(p_{0},\cdots,p_{\kappa})\cdot s_{(i^{a+1})}(p_{0},\cdots,p_{\kappa})}{s_{(i^{a})}(p_{0},\cdots,p_{\kappa})\cdot s_{((i-1)^{a})}(p_{0},\cdots,p_{\kappa})},
	\end{align}
	where the last equality can be easily verified by using the Pl\"{u}cker relations. 
\end{proof}

\begin{example}\label{ex:energy_computation_single_color}
	Suppose $\kappa=a=1$ and denote $q=p_{1}/p_{0}$. Then
	\begin{align}
	Z_{c}^{(1)} = p_{0}^{0}p_{1}^{c}+p_{0}^{1}p_{1}^{c-1}+\cdots+p_{0}^{c}p_{1}^{0} = \frac{p_{0}^{c}(1-q^{c+1})}{1-q} \mathbf{1}(q\ne 1) + \frac{c+1}{2^{c}} \mathbf{1}(q=1). 
	\end{align}  
	Hence we obtain 
	\begin{align}
	\eps_{c}^{(1)} &= \mathbb{P}\left( \text{$C$ contains at least one 0} \right) = p_{1}\left( 1- \frac{p_{1}^{c}}{Z_{c}^{(1)}} \right) \\
	&= p_{1} \frac{1-q^{c}}{1-q^{c+1}} \mathbf{1}(p_{1}\ne 1/2)+ \frac{c}{2(c+1)}\mathbf{1}(p_{1}=1/2) \label{eq:eps_1BBS}. 
	\end{align}
	Then \eqref{eq:rescaled_row_lim} and a simple algebra shows \eqref{eq:thm1_eq3}. $\hfill \blacktriangle$
\end{example}

\subsection{Large deviations principle for the energy matrix and the rows}
\label{subsection:LDP}

In order to establish the large deviations principle for the row lengths, in view of the expression 
\eqref{eq:rho_MC_functional}, we introduce the following setup of Markov additive functionals over the joint carrier processes. For each pair of integers $c\ge 1$ and $1\le a \le \kappa$, let $\Gamma_{c}^{(a)}=(\Gamma_{c}^{(a)}(x))_{x\ge 0}$ denote the $B_{c}^{(a)}$-carrier process over $X^{\mathbf{p}}$. Define a Markov chain $(\mathtt{Z}_{t})_{t\ge 0}$ on the finite state space $\Sigma:=B_{c}^{(a)}\times B_{c+1}^{(a)} \times B_{1}^{(1)}$ by 
\begin{align}\label{eq:def_MC_Z}
\mathtt{Z}_{t} = (\Gamma_{c}^{(a)}(t),\Gamma_{c+1}^{(a)}(t), X^{\mathbf{p}}(t+1))
\end{align}
with initial state $\mathtt{Z}_{0}=(U_{c}^{(a)}, U_{c+1}^{(a)},X^{\mathbf{p}}(1))$. Its transition matrix $P:\Omega^{2}\rightarrow [0,1]$ is given by 
\begin{equation}\label{eq:kernel_carrier_box_MC_row}
P( (C_{1}, C_{2}, x), (C_{1}', C_{2}' ,x') ) = \mathbf{1}(C_{1}'=R_{2}(C_{1},x)) \mathbf{1}(C_{2}'=R_{2}(C_{2},x)) \,\mathbf{p}(x').
\end{equation}
Let $\Omega_{0}\subseteq B_{c}^{(a)}\times B_{c+1}^{(a)}$ denote the set of all jointly reachable states as in Lemma \ref{lemma:carrier_irreducibility}. Write $\Sigma_{0}:=\Omega_{0}\times B_{1}^{(1)}$. Then according to Lemma \ref{lemma:carrier_irreducibility}, $(\mathtt{Z}_{t})_{t\ge 0}$ defines an irreducible Markov chain on $\Sigma_{0}$. 

Fix an arbitrary functional $g:\Sigma_{0}\rightarrow \mathbb{R}$. Denote $\mathtt{S}_{n}=\sum_{k=0}^{n-1} g(\mathtt{Z}_{k})$, which defines a Markov additive functional. Since the underlying chain $(\mathtt{Z}_{t})_{t\ge 0}$ on state space $\Sigma_{0}$ is irreducible, this process satisfies the large deviations principle in the sense of Theorem \ref{thm:row_concentration} (i). Namely, for each $t\in \mathbb{R}$, let $P_{tg}$ be the exponentially weighted transition matrix defined by 
\begin{equation}\label{eq:def_P_tg}
P_{tg}(x,y)=P(x,y)e^{tg(y)}.
\end{equation}
Let $\lambda_{P}(tg)$ denote the largest real root of the characteristic polynomial $\phi(x,t)=\det(xI - P_{tg})$. By Perron-Frobenius theorem, $\lambda_{P}(tg)$ is positive and is a simple root of $\phi(x,t)$ for each $t\in \mathbb{R}$. Let $\Lambda(t):=\log \lambda_{P}(tg)$ and define its Legendre transform 
\begin{equation}\label{eg:def_legendre}
\Lambda^{*}(u) = \sup_{t\in \mathbb{R}} [ut - \Lambda(t)].
\end{equation}
It is well-known that both $\Lambda$ and $\Lambda^{*}$ are convex. Then Cram\'er's theorem for the Markov additive functionals asserts that (see  \cite[Theorem 3.1.2]{dembo2009large}) for any Borel set $F\subseteq \mathbb{R}$,  
\begin{equation}\label{eq:cramer}
-\inf_{u\in \mathring{F}} \Lambda^{*}(u) \le \liminf_{n\rightarrow \infty} \frac{1}{n} \log \mathbb{P}\left(n^{-1}\mathtt{S}_{n} \in F  \right)  \le \limsup_{n\rightarrow \infty} \frac{1}{n} \log \mathbb{P}\left(n^{-1}\mathtt{S}_{n} \in F  \right) \le -\inf_{u\in \bar{F}} \Lambda^{*}(u),
\end{equation}
where $\mathring{F}$ and $\bar{F}$ denotes the interior and closure of $F$, respectively. 

Now it is easy to derive Theorem \ref{thm:row_concentration} (i).
\begin{proof}[\textbf{Proof of Theorem \ref{thm:row_concentration} (i)}]
Define functionals $g_{E},g_{\rho}:\Sigma_{0}\rightarrow \mathbb{R}$ by 
\begin{align}
g_{E}(C_{1},C_{2}, x) &= H(C_{1},x), \label{eq:def_MC_g_E}\\
g_{\rho}(C_{1},C_{2}, x) &= H(C_{2},x) - H(C_{1},x). \label{eq:def_MC_g_rho}
\end{align}
Then according to \eqref{eq:def_row_transfer_energy} and \eqref{eq:rho_MC_functional}, we have 
\begin{align}\label{eq:energy_rho_MAP}
E_{c}^{(a)}(X^{n,\mathbf{p}}) = \sum_{k=0}^{n-1} g_{E}(\mathtt{Z}_{k}) , \qquad \rho_{c+1}^{(a)}(X^{n,\mathbf{p}}) = \sum_{k=0}^{n-1} g_{\rho}(\mathtt{Z}_{k}).
\end{align}
Hence both $E_{c}^{(a)}(X^{n,\mathbf{p}})$ and $\rho_{c+1}^{(a)}(X^{n,\mathbf{p}})$ satisfy the large deviations principle \eqref{eq:cramer} with rate functions $\Lambda^{*}$ defined at \eqref{eg:def_legendre} with the corresponding functionals $g_{E}$ and $g_{\rho}$, respectively. Moreover, since  $\rho_{1}^{(a)}(X^{n,\mathbf{p}})=E_{1}^{(a)}(X^{n,\mathbf{p}})$, this also covers the first row $\rho_{1}^{(a)}(X^{n,\mathbf{p}})$. This shows Theorem \ref{thm:row_concentration} (i).
\end{proof}

\begin{remark}
	For the large deviations rate function for the energy matrix $E(X^{n,\mathbf{p}})$, we can use the simpler setting of a single carrier process augmented with the following ball color $\mathtt{X}_{t}=(\Gamma(t-1),X^{\mathbf{p}}(t))$ defined at \eqref{eq:kernel_carrier_box_MC}, instead of the chain $\mathtt{Z}_{t}$ defined at \eqref{eq:def_MC_Z}. Indeed, according to \eqref{eq:def_row_transfer_energy}, we can write $E_{c}^{(a)}(X^{n,\mathbf{p}}) = \sum_{t=0}^{n-1} H(\mathtt{X}_{t})$, where $H:B_{c}^{(a)}\times B_{1}^{(1)}\rightarrow\{0,1\}$ be the local energy defined in (\ref{eq:def_H}). As the chain $\mathtt{X}_{t}$ is irreducible, $E_{c}^{(a)}(X^{n,\mathbf{p}})$ satisfies the large deviations principle as explained in \eqref{eq:cramer} and above, with $g=H$.
\end{remark}

\begin{example}\label{ex:3-color_row1}
	Let $\kappa=2$ and consider the $B_{1}^{(1)}$-carrier process $\Gamma_{1}^{(1)}$ with ball density $\mathbf{p}=(p_{0},p_{1},p_{2})$ (see Example \ref{example1}). The Markov chain $\mathtt{X}_{t}=(\Gamma(t-1),X^{\mathbf{p}}(t))$ is defined on the state space $B_{1}^{(1)}(2)\times B_{1}^{(1)}(2)$ with a $(9\times 9)$ transition kernel $P$ given in \eqref{eq:kernel_carrier_box_MC}. Let $g=H:B_{1}^{(1)}(2)\times B_{1}^{(1)}(2)\rightarrow\{0,1\}$ be the local energy defined in (\ref{eq:def_H}). We apply the large deviations principle for the Markov additive functional $E_{1}^{(1)}(X^{n,\mathbf{p}}) = \sum_{t=0}^{n-1}g(\mathtt{X}_{t})$. The corresponding exponentially weighted transition kernel $P_{tg}$ defined in \eqref{eq:def_P_tg} reads 
	\begin{align}
	P_{tg} = 
	\begin{matrix}
	(0,0)\\
	(1,0)\\
	(2,0)\\
	(0,1)\\
	(1,1)\\
	(2,1)\\
	(0,2)\\
	(1,2)\\
	(2,2)
	\end{matrix}
	\begin{bmatrix}
	p_{0} & 0 & 0 & p_{1} & 0 & 0 & p_{2} & 0 & 0 \\
	p_{0} & 0 & 0 & p_{1} & 0 & 0 & p_{2} & 0 & 0 \\
	p_{0} & 0 & 0 & p_{1} & 0 & 0 & p_{2} & 0 & 0 \\
	0 & p_{0}e^{t} & 0 & 0 & p_{1}e^{t} & 0 & 0 & p_{2}e^{t} & 0 \\
	0 & p_{0} & 0 & 0 & p_{1} & 0 & 0 & p_{2} & 0 \\
	0 & p_{0} & 0 & 0 & p_{1} & 0 & 0 & p_{2} & 0 \\
	0 & 0 & p_{0}e^{t} & 0 & 0 & p_{1}e^{t} & 0 & 0 & p_{2}e^{t} \\
	0 & 0 & p_{0}e^{t} & 0 & 0 & p_{1}e^{t} & 0 & 0 & p_{2}e^{t} \\
	0 & 0 & p_{0} & 0 & 0 & p_{1} & 0 & 0 & p_{2}	
	\end{bmatrix}
	.
	\end{align}
	Let $\Lambda(t)=\log\lambda_{P}(tg)$, where $\lambda_{P}(tg)$ is the largest real root of the following characteristic polynomial of $P_{tg}$ divided by $x^{6}$:
	\begin{align}
	x^{3} - x^{2}- x(e^{t}-1)(p_{0}p_{1}+p_{1}p_{2}+p_{2}p_{0}) -p_{0}p_{1}p_{2}(e^{t}-1)^{2}=0.
	\end{align}
	By taking partial derivative in $t$ and plugging in $(t,x)=(0,1)$, we get  
	\begin{equation}
	\eps_{1}^{(1)} = \frac{d\Lambda(t)}{dt}\bigg|_{t=0} =\frac{d\lambda_{P}(tH)}{dt} \bigg|_{t=0} = p_{0}p_{1}+p_{1}p_{2}+p_{2}p_{0}.  
	\end{equation} 	
	By Theorem \ref{thm:SLLN_rows}, this shows, almost surely, 
	\begin{align}
	\lim_{n\rightarrow \infty} n^{-1}\rho_{1}^{(1)}(n) = p_{0}p_{1}+p_{1}p_{2}+p_{2}p_{0}.
	\end{align}
	One can also compute this quantity in the more general case using the Schur function representation given in \eqref{eq:thm1_eq1}.
	
	Note that as $t\rightarrow -\infty$, $\lambda_{P}(tH)$ is asymptotically the largest real root of 
	\begin{align}
	x^{3} - x^{2}+x(p_{0}p_{1}+p_{1}p_{2}+p_{2}p_{0})-p_{0}p_{1}p_{2}=(x-p_{0})(x-p_{1})(x-p_{2})=0.
	\end{align}
	Thus,  $\Lambda(t)= \log \max(p_{0},p_{1},p_{2})+o(1)$ as $t\rightarrow -\infty$. On the other hand, as $t\rightarrow \infty$, $\lambda_{P}(tH)$ is asymptotically the largest real root of 
	\begin{align}
	x^{3}- xe^{t}(p_{0}p_{1}+p_{1}p_{2}+p_{2}p_{0})-p_{0}p_{1}p_{2} e^{2t}=0.
	\end{align}
	From this we deduce  
	\begin{equation}
	\Lambda(t) = \frac{2t}{3} + O(1)  \quad \text{as $t\rightarrow \infty$}.
	\end{equation}
	By definition (\ref{eg:def_legendre}), this implies that $\Lambda^{*}$ is strictly decreasing on $[0,p_{0}p_{1}+p_{1}p_{2}+p_{2}p_{0})$, strictly increasing on $(p_{0}p_{1}+p_{1}p_{2}+p_{2}p_{0},2/3]$, and equals $\infty$ elsewhere. $\hfill\blacktriangle$
\end{example}

\begin{example}\label{example2}
	Let $\kappa=2$ and consider the $B_{2}^{(1)}$-carrier process $\Gamma_{2}^{(1)}$ with uniform density $\mathbf{p}=(1/3,1/3,1/3)$. We borrow the setting in Example \ref{ex:3-color_row1}. The Markov chain $\mathtt{X}_{t}=(\Gamma_{2}^{(1)}(t-1),X^{\mathbf{p}}(t))$ is defined on the state space $B_{2}^{(1)}(2)\times B_{1}^{(1)}(2)$ with a $(18\times 18)$ transition kernel $P$ given in (\ref{eq:kernel_carrier_box_MC}). Let $\Lambda(t)=\log\lambda_{P}(tg)$, where $\lambda_{P}(tg)$ is the largest real root of the following characteristic polynomial of $P_{tg}$ divided by $x^{12}$:
	\begin{equation}\label{ex:char_poly}
	x^{6}-x^{5}-\frac{2e^{t}-1}{3} x^{4} - \frac{4e^{2t}-12e^{t}+1}{27} x^{3} + \frac{e^{t}(5e^{t}-2)}{27}x^{2} + \frac{2e^{2t}(e^{t}-2)}{81}x -\frac{e^{3t}(e^{t}+8)}{3^{6}}=0. 
	\end{equation}
	By taking partial derivative in $t$ and plugging in $(t,x)=(0,1)$, we get  
	\begin{equation}
	\eps_{2}^{(1)} = \frac{d\Lambda(t)}{dt}\bigg|_{t=0} =\frac{d\lambda_{P}(tg)}{dt} \bigg|_{t=0} = \frac{4}{9}.  
	\end{equation} 	
	
	Note that as $t\rightarrow -\infty$, $\lambda_{P}(tg)$ is asymptotically the largest real root of 
	\begin{equation}
	x^{3}(x-1/3)^{3} =0,
	\end{equation}
	which is $1/3$. Thus $\Lambda(t)= -\log 3+o(1)$ as $t\rightarrow -\infty$. On the other hand, as $t\rightarrow \infty$, $\lambda_{P}(tg)$ is asymptotically the largest real root of 
	\begin{equation}
	x^{6}- \frac{4e^{2t}-12e^{t}+1}{27} x^{3} -\frac{e^{3t}(e^{t}+8)}{3^{6}}=0, 
	\end{equation}
	so we get 
	\begin{equation}
	\Lambda(t) = \frac{2t}{3} - \log\left(\frac{3(\sqrt{5}-1)}{2}\right) +o(1) \quad \text{as $t\rightarrow \infty$}.
	\end{equation}
	By definition (\ref{eg:def_legendre}), this implies that $\Lambda^{*}$ is strictly decreasing on $[0,4/9)$, strictly increasing on $(4/9,2/3]$, and equals $\infty$ elsewhere. $\hfill\blacktriangle$
\end{example}

\begin{remark}
	Theorem \ref{thm:row_concentration} for $\kappa=1$ in fact strengthens \cite[Lemma 3.4]{levine2017phase}, where a polynomial concentration of $\rho_{c}^{(1)}(X^{n,\mathbf{p}})$ around its mean is shown for all $c\ge 1$.  Also, we remark that one can compute $\eps_{c}^{(1)}$ (and hence $\eta^{(1)}_c$) via a completely different method. Namely, let $(S_{k})_{k\ge 0}$ be the simple random walk started at $S_{0}=0$ and jumping to the right with probability $p$ and to the left with probability $1-p$. Let $\varsigma=\inf \{ k>0\,:\, S_{k}=0 \}$ be the first return time of $S_{k}$ to $0$. Then according to  \cite[Theorem 1]{levine2017phase}, we have 
	\begin{equation}
	\eps_{c}^{(1)} = \mathbb{P}\left(\max_{0\le k \le \varsigma} S_{k} \le  c\right). 
	\end{equation} 
	Now considering $S_{k}$ as the fortune of a gambler after betting $k$ times, using the Gambler's ruin probability \cite[Ch.\ 5.7]{Durrett}, we arrive at the same expression in (\ref{eq:eps_1BBS}).
\end{remark}

In the rest of this subsection, we show Theorem \ref{thm:row_concentration} (ii). The following proposition will be useful in showing that the rate function $\Lambda^{*}$ for $\rho_{i}^{(a)}(X^{n,\mathbf{p}})$ in our case is finite on an open interval containing 
$\eta^{(a)}_i$ defined in (\ref{eq:thm1_eq1}).

\begin{proposition}\label{prop:basic_est_thm1}

	\begin{description}
		\item[(i)] For any $\eps\in [0,\eta_{c}^{(a)})$ and $n\ge 1$, 
		\begin{equation}
		\mathbb{P}( n^{-1} \rho_{c}^{(a)}(X^{n,\mathbf{p}})\le \eps )\ge p_{0}^{n}.
		\end{equation}
				\item[(ii)] There exists constants $\nu \in (\eta_{c}^{(a)},1]$ and $\delta\in (0,1)$ such that for each $\eps \in (\eta_{c}^{(a)},\nu) $ and $n\ge 1$,
		\begin{equation}
		\mathbb{P}( n^{-1} \rho_{c}^{(a)}(X^{n,\mathbf{p}})\ge \eps )\ge \delta^{n}.
		\end{equation}
	\end{description}
\end{proposition}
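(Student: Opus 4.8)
\textbf{Part (i)} is immediate from the empty configuration. If $X^{\mathbf{p}}(x)=0$ for every $1\le x\le n$, then by Proposition \ref{prop:LICI} inserting $0$ is a column insertion that returns the carrier tableau unchanged with local energy $H=0$, so the carrier $\Gamma_{c}^{(a)}$ never leaves $U_{c}^{(a)}$, every energy vanishes, and hence $\rho_{c}^{(a)}(X^{n,\mathbf{p}})=0$. This event has probability exactly $p_{0}^{n}$, and on it $n^{-1}\rho_{c}^{(a)}=0\le \eps$ for every $\eps\ge 0$; therefore $\mathbb{P}(n^{-1}\rho_{c}^{(a)}(X^{n,\mathbf{p}})\le \eps)\ge p_{0}^{n}$, which is the claim.

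For \textbf{Part (ii)} the plan is to reduce the probabilistic statement to a deterministic combinatorial optimization on the carrier chain. As in \eqref{eq:energy_rho_MAP}, we write $\rho_{c}^{(a)}(X^{n,\mathbf{p}})=\sum_{k=0}^{n-1}g(\mathtt{Z}_{k})$, where $(\mathtt{Z}_{k})$ is the irreducible finite joint carrier chain of Lemma \ref{lemma:carrier_irreducibility} with capacities shifted so that the two carriers are $\Gamma_{c-1}^{(a)}$ and $\Gamma_{c}^{(a)}$ (for $c=1$ this is the single-carrier chain $\mathtt{X}_{t}$ with $g=H$), the increment $g$ takes values in $\{-1,0,1\}$, and its stationary average equals $\eta_{c}^{(a)}\in(0,1]$ by Theorem \ref{thm:SLLN_rows}. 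We may assume $\eta_{c}^{(a)}<1$, since otherwise $(\eta_{c}^{(a)},1]$ is empty and there is nothing to prove. View the chain as a weighted directed graph on $\Omega_{0}$, where each edge $C\xrightarrow{\,x\,}R_{2}(C,x)$ carries probability $p_{x}$ and weight $g$. I will exhibit a directed cycle $\gamma^{*}$ whose mean weight $\bar g_{\gamma^{*}}$ strictly exceeds $\eta_{c}^{(a)}$, and then build explicit length-$n$ input words following $\gamma^{*}$ that force $n^{-1}\rho_{c}^{(a)}\ge\eps$.

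The existence of such a super-mean cycle is the crux. Let $M$ denote the maximum mean weight over all directed cycles. The stationary probability flow $f(C,x)=\pi_{c-1,c}^{(a)}(C)\,p_{x}$ is a circulation on $\Omega_{0}$, hence decomposes as a nonnegative combination of simple directed cycles $f=\sum_{\gamma}\lambda_{\gamma}\mathbf{1}_{\gamma}$; normalizing, $\eta_{c}^{(a)}=\sum_{\gamma}w_{\gamma}\,\bar g_{\gamma}$ becomes a convex combination of cycle means. The ground state carries a self-loop (input $x=0$) of weight $g=0$ and positive flow $\pi_{c-1,c}^{(a)}(\text{ground})\,p_{0}>0$, so its weight $w_{\gamma_{0}}>0$; since a self-loop lies in no larger simple cycle, this coefficient survives in every decomposition. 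If every cycle had mean $\le\eta_{c}^{(a)}$, the convex combination would be at most $\eta_{c}^{(a)}(1-w_{\gamma_{0}})<\eta_{c}^{(a)}$ (using $\eta_{c}^{(a)}>0$), a contradiction; hence $M>\eta_{c}^{(a)}$, and we fix a cycle $\gamma^{*}$ of length $L$, input word $u$, mean $\bar g_{\gamma^{*}}=M\le 1$, based at a state $C^{*}\in\Omega_{0}$.

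Finally I assemble the configuration. Since $C^{*}$ is jointly reachable, Lemma \ref{lemma:carrier_irreducibility} provides an input word $v$ of length $\ell$ driving the ground state to $C^{*}$. For each $n$ set $w^{(n)}=v\,u^{m}\,(\text{padding})$ with $m=\lfloor(n-\ell)/L\rfloor$ and arbitrary padding of length $<L$; because $g\ge-1$ everywhere and $u^{m}$ contributes exactly $mLM$, feeding $w^{(n)}$ yields $\rho_{c}^{(a)}\ge mLM-(\ell+L)\ge nM-K$ for a constant $K=(\ell+L)(M+1)$ independent of $n$. Fix any $\nu\in(\eta_{c}^{(a)},M)$ and $\delta:=\min_{i}p_{i}\in(0,1)$. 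For all $n\ge n_{0}:=K/(M-\nu)$ we get $n^{-1}\rho_{c}^{(a)}(w^{(n)})\ge M-K/n\ge\nu>\eps$ for every $\eps\in(\eta_{c}^{(a)},\nu)$, while the event that $X^{\mathbf{p}}$ agrees with $w^{(n)}$ on $[1,n]$ has probability $\prod_{k}p_{w^{(n)}_{k}}\ge\delta^{n}$ and forces $n^{-1}\rho_{c}^{(a)}(X^{n,\mathbf{p}})\ge\eps$; this gives $\mathbb{P}(n^{-1}\rho_{c}^{(a)}\ge\eps)\ge\delta^{n}$ for all large $n$, which is exactly what is needed (via \eqref{eq:cramer}) to conclude that $\Lambda^{*}$ is finite just above $\eta_{c}^{(a)}$. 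The main obstacle throughout is the strict inequality $M>\eta_{c}^{(a)}$: it encodes that upward fluctuations of the row length happen at linear scale, and the circulation-decomposition argument above is the cleanest way to obtain it, using the zero-weight ground self-loop to break the tie in the convex combination.
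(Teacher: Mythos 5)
Your part (i) is the paper's argument verbatim: on the all-zero configuration the carrier never leaves $U_{c}^{(a)}$, every local energy vanishes, the event has probability exactly $p_{0}^{n}$, and the bound follows.

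Your part (ii) is correct but takes a genuinely different route. The paper argues on the trajectory level: by the ergodic theorem a positive density of ``frozen'' blocks (both carriers at the ground state and all sites empty) appears in $[1,n]$, and on each such block one performs a local surgery that adds a unit to the row length while restoring the carrier states at the block's right end; doing this on linearly many blocks costs probability $\delta^{n}$. The existence of that surgery is only asserted there (``Observe that if $L$ is large enough, we can modify the configuration\dots''), so the combinatorial heart of the published argument is left implicit. Your flow-decomposition argument replaces exactly that step with something fully checkable: the stationary edge flow $\pi(C)p_{x}$ on the graph over $\Omega_{0}$ is a circulation, hence decomposes into simple directed cycles; the zero-weight self-loop at the ground state (input $0$) carries positive flow and, being a loop, can only be covered by itself in any such decomposition; since $\eta_{c}^{(a)}>0$ this forces some cycle to have mean weight $M>\eta_{c}^{(a)}$, and pumping that cycle yields an explicit deterministic word of probability at least $(\min_{i}p_{i})^{n}$ realizing $n^{-1}\rho_{c}^{(a)}\ge\nu$. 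This is a clean, self-contained substitute for the paper's surgery step. Two minor remarks: like the paper's own proof, yours only delivers the bound for $n\ge n_{0}$ rather than all $n\ge 1$ (the literal statement is in fact unattainable, e.g.\ $\rho_{c}^{(a)}(X^{1,\mathbf{p}})=0$ for $c\ge 2$, and only the large-$n$ form is used downstream, so this is a defect of the statement, not of your argument); and you should state explicitly that the weight $g$ is placed on the edge $(C,x)$ of the pair-carrier graph rather than on the vertex of $\Sigma_{0}$, though the translation between the two bookkeepings is immediate.
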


\begin{proof}
	For (i), fix $0\le \eps<\eta^{(a)}_c$. Since $\Gamma_{c}^{(a)}(0)=U_{c}^{(a)}$ and $R(U_{c}^{(a)},0)=(0,U_{c}^{(a)})$ for each $c\ge 1$, independence in $X^{\mathbf{p}}$ gives 
	\begin{align}
	\mathbb{P}( n^{-1} \rho_{c}^{(a)}(X^{n,\mathbf{p}})\le \eps ) 
	& \ge \mathbb{P} (\rho_{c}^{(a)}(X^{n,\mathbf{p}}) = 0) \\
	& \ge  \mathbb{P}( X^{\mathbf{p}}(x)\equiv 0 \,\, \forall 1\le x \le n) \ge   p_{0}^{n}.
	\end{align}
	
	Next, we show (ii). Recall that by Lemma \ref{lemma:carrier_irreducibility}, the joint carrier process $(\Gamma_{c}^{(a)}(t), \Gamma_{c+1}^{(a)}(t))_{t\ge 0}$ on the set $\Omega_{0}$ of all jointly reachable states is irreducible. Since $\Omega_{0}$ is finite, it has a unique stationary distribution, which we will denote by $\pi_{c,c+1}^{(a)}$. It follows that the Markov chain $\mathtt{Z}_{t}$ (see \eqref{eq:def_MC_Z}) on state space $\Sigma_{0}=\Omega_{0}\times B_{1}^{(1)}$ is irreducible with unique stationary distribution $\pi:=\pi_{c,c+1}^{(a)}\otimes \mathbf{p}$. 
	
	Fix the integers $c,L\ge 1$. For each $x\in \mathbb{N}$, we call the interval $[x,x+L)$ `frozen' if $\Gamma_{c}^{(a)}(x)=U_{c}^{(a)}$, $\Gamma_{c+1}^{(a)}(x)=U_{c+1}^{(a)}$, and $X^{\mathbf{p}}(y)\equiv 0$ for all $y\in [x,x+L)$. Let $g_{E}$ and $g_{\rho}$ denote the functionals defined at \eqref{eq:def_MC_g_E} and $\eqref{eq:def_MC_g_rho}$, and recall the relations \eqref{eq:energy_rho_MAP}. According to Proposition \ref{prop:LICI}, it follows that, if $[x,x+L)$ is frozen, then
	\begin{align}
	g_{E}(\mathtt{Z}_{y}) = g_{\rho}(\mathtt{Z}_{y}) = 0 \qquad \forall y\in [x,x+L).
	\end{align}
	Observe that if $L$ is large enough, we can modify the configuration $X^{\mathbf{p}}$ over the frozen interval $[x, x+L)$ in such a way that we gain additional unit contribution both to the energy $E_{c}^{(a)}(X^{n,\mathbf{p}})$ and the row length $\rho_{c+1}^{(a)}(X^{n,\mathbf{p}})$, while maintaining the chain $\mathtt{Z}_{t}$ the same at the beginning and at the end of such interval. 
	
	Suppose $L$ is large enough so that the above mentioned replacement holds for each frozen interval $[x,x+L)$. Denote 
	\begin{align}
		N_{L}(n) = \sum_{1\le k< n/L} \mathbf{1}\big( \text{$[kL, (k+1)L)$ is frozen} \big),
	\end{align}
	which is the number of frozen intervals of the form $[kL,(k+1)L)$ contained in the interval $[1,n]$. Since the chain $(\mathbb{Z}_{t})_{t\ge 0}$ is irreducible, the $L$-fold Markov chain $(\mathtt{Z}_{kL},\cdots, \mathtt{Z}_{(k+1)L-1})_{k\ge 1}$ is also irreducible and hence ergodic. So by the Markov chain ergodic theorem,  
	\begin{align}
	\lim_{n\rightarrow \infty}\frac{1}{n}N_{L}(n) > 0 
	\end{align}
	almost surely. Denoting the limit in the left hand side by $\tau\in (0,1)$, it follows that there exists a constant $K\in \mathbb{N}$ such that 
	\begin{align}\label{eq:density_frozen_intervals}
	\PP\left( N_{L}(n) \ge (\tau/2) n \quad \text{for all $n\ge K$}  \right) = 1.
	\end{align}
	
	Now for each $n\ge K$, there are at least $(\tau/2)n$ frozen intervals of the form $[kL, (k+1)L)$ contained in $[1,n]$ with probability 1. By the independence in $X^{\mathbf{p}}$, we can perform such replacement independently over all such frozen intervals with a positive probability, say, $\delta$. Hence we may perform the replacement for the $(\tau/2) n$ frozen intervals appearing in $[1,n]$ to increase both $E_{c}^{(a)}(X^{n,\mathbf{p}})$ and  $\rho_{c+1}^{(a)}(X^{n,\mathbf{p}})$ by $(\tau/2) n$ at the cost of exponential probability $\delta^{n}>0$. As $\rho_{1}^{(a)}(X^{n,\mathbf{p}}) = E_{1}^{(a)}(X^{n,\mathbf{p}})$ and $c\ge 1$ was arbitrary, it follows that 
	\begin{equation}\label{eq:pf_basic_est}
	\mathbb{P}\big( n^{-1} \rho_{c}^{(a)}(X^{n,\mathbf{p}})\ge \eta_{c}^{(a)}+(\tau/2) \big)\ge \delta^{n}
	\end{equation}
	for all $c\ge 1$ and $n\ge K$. Note that since $\rho_{c}^{(a)}(X^{n,\mathbf{p}})\le n$, this also implies $\eta_{c}^{(a)}+(\tau/2)\le 1$. Hence (ii) follows by letting $\nu=\eta_{c}^{(a)}+(\tau/2)$.   
\end{proof}

Now we are ready to prove Theorem \ref{thm:row_concentration} (ii).

\begin{proof}[\textbf{Proof of Theorem \ref{thm:row_concentration} (ii)}]
	We show the assertion for $c\ge 2$, and a similar argument applies for the case $c=1$. Let $(\mathtt{Z}_{t})_{t\ge 0}$ be the irreducible Markov chain on state space $\Sigma_{0}=\Omega_{0}\times B_{1}^{(1)}$ defined at \eqref{eq:def_MC_Z}. Let $\pi=\pi_{c,c+1}^{(a)}\otimes \mathbf{p}$ denote its unique stationary distribution. Let $g=g_{\rho}:\Sigma\rightarrow \mathbb{R}$ be the functional defined at \eqref{eq:def_MC_g_rho}. Let $\lambda_{P}(tg)$, $\Lambda$, and $\Lambda^{*}$ be as defined at \eqref{eg:def_legendre} and above.

	Recall that $\lambda_{P}(tg)$ is a positive simple root of the characteristic polynomial $\phi(x,t)=\det(xI - P_{tg})$ for each $t\in \mathbb{R}$. Hence by the Implicit Function Theorem for analytic functions (see, e.g., \cite[Thm. 2.1.2]{hormander1973introduction} or \cite[Thm 2.3.5]{krantz2002primer}), $\lambda_{P}(tg)$ is an analytic function in $t$ on $\mathbb{R}$. Also recall that both $\Lambda$ and $\Lambda^{*}$ are convex. Furthermore, it holds that 
	\begin{align}
	\frac{d}{dt} \Lambda(t) \bigg|_{t=0} = \EE_{\pi}[g(\mathtt{Z}_{0})] = \lim_{n\rightarrow \infty} n^{-1}\rho_{c}^{(a)}(X^{n,\mathbf{p}}) = \eta_{c}^{(a)},
	\end{align} 
	where we have used the Markov chain ergodic theorem and Theorem \ref{thm:SLLN_rows} for the last two equalities.  This yields $\Lambda^{*}(\eta_{c}^{(a)}) = -\Lambda(0)=  0$. Moreover, $\Lambda^{*}(u)\ge -\Lambda(0)=0$ for all $u\in \mathbb{R}$. Since $\Lambda^{*}$ is convex, it follows that $\Lambda^{*}$ is non-increasing over $(-\infty,\eta_{c}^{(q)}]$ and non-decreasing over $[\eta_{c}^{(a)},\infty)$.
	
	Let $\nu\in (\eta_{c}^{(a)},1]$ be the constant in Proposition \ref{prop:basic_est_thm1}. It suffices to show that $\Lambda^{*}\in (0,\infty)$ on $[0,\eta_{c}^{(a)})\cup (\eta_{c}^{(a)},\nu]$. First, we show that $\Lambda^{*}<\infty$ over $[0,\nu]$. Fix $0\le \eps<\eta_{c}^{(a)}$. According to Proposition \ref{prop:basic_est_thm1} (i) and \eqref{eq:cramer_thm} we have shown above, it follows that 
	\begin{align}
	\log p_{0} \le \limsup_{n\rightarrow \infty} \frac{1}{n} \log \PP\left( n^{-1}\rho_{c}^{(a)}(X^{n,\mathbf{p}})\le \eps  \right) = -\inf_{u\le \eps} \Lambda^{*}(u) = -\Lambda^{*}(\eps).
	\end{align}
	Note that the last equality follows since $\Lambda^{*}$ is non-increasing on $(-\infty, \eta_{c}^{(a)})$. Hence $\Lambda^{*}(\eps)\le \log p_{0}^{-1}<\infty$ for all $\eps\in [0,\eta_{c}^{a})$. Similarly, for any $\eps\in (\eta_{c}^{(a)},\nu]$, as $\Lambda^{*}$ is non-decreasing on $(\eta_{c}^{(a)},\infty)$, Proposition \ref{prop:basic_est_thm1} (ii) yields 
	\begin{align}
	\log \delta \le \limsup_{n\rightarrow \infty} \frac{1}{n} \log \PP\left( n^{-1}\rho_{c}^{(a)}(X^{n,\mathbf{p}})\ge\eps  \right) = -\inf_{u\ge \eps} \Lambda^{*}(u) = -\Lambda^{*}(\eps)
	\end{align}
	for some constant $\delta>0$. So $\Lambda^{*}(\eps)\le \log \delta^{-1}<\infty$ for any $\eps\in [0,\nu]$.
	
	Finally, we show $\Lambda^{*}>0$ over $[0,\eta_{c}^{(a)})\cup (\eta_{c}^{(a)},\nu]$. For contradiction, suppose $\Lambda^{*}(\eps)=0$ for some $\eps\in  [0,\eta_{c}^{(a)})\cup (\eta_{c}^{(a)},\nu]$. First suppose $\eps\in (\eta_{c}^{(a)},\nu]$. Since $\Lambda(t)$ is differentiable, this yields 
	\begin{align}\label{eq:of_rate_positive_strictly_convex}
	\Lambda^{*}(\eps) = \eps t_{\eps} - \Lambda(t_{\eps}),
	\end{align} 
	where $t_{\eps} \in \mathbb{R}$ satisfies $\Lambda'(t_{\eps})=\eps$. Since $\Lambda'(t_{\eps}) = \eps>\eta^{(a)}_{c} = \Lambda'(0)$ and $\Lambda$ is convex, it follows that $t_{\eps}> 0$. Moreover, by the convexity of $\Lambda$, for any $s\in (0,t_{\eps})$, 
	\begin{align}
	\eps = \frac{\Lambda(t_{\eps})}{t_{\eps}} \le \frac{\Lambda(t_{\eps}) - \Lambda(s)}{t_{\eps}-s} \le \Lambda'(t_{\eps}) = \eps,
	\end{align}
	so \eqref{eq:of_rate_positive_strictly_convex} with the assumption $\Lambda^{*}(\eps)=0$ yield 
	\begin{align}
	\Lambda(s) = \eps s + \Lambda(t_{\eps}) - \eps t_{\eps} = \eps s - \Lambda^{*}(\eps) = \eps s.
	\end{align}
	Thus $\lambda_{P}(tg) = e^{\eps t}$ for all $t\in (0,t_{\eps})$. But since $\lambda_{P}(tg)$ is analytic on $\mathbb{R}$ and $t_{\eps}>0$, by the identity theorem (see, e.g., \cite[Cor. 1.2.6]{krantz2002primer}), it follows that $\lambda_{P}(tg) = e^{\eps t}$ for all $t\in \mathbb{R}$. This yields 
	\begin{align}
	\eta_{c}^{(a)} = \Lambda'(0) = \frac{d}{dt} \log e^{\eps t} \bigg|_{t=0} = \eps,	
	\end{align}
	which is a contradiction. A similar argument rules out the other case $\eps\in [0,\eta_{c}^{(a)})$. This shows the assertion.
\end{proof}

\subsection{Fluctuation of the rows}

Having established the strong law of large numbers and large deviations principle for the row lengths, a natural next question is about their fluctuation around their mean. In Theorem \ref{thm:FCLT_persistence}, we characterize the fluctuation of the rows in terms of the functional central limit theorem (FCLT) and their `persistence scaling'. Namely, the central limit theorem about the rows  states the following convergence in distribution 
\begin{equation}
\frac{\rho_{c}^{(a)}(X^{n,\mathbf{p}}) - \eta^{(a)}_cn }{ \sqrt{n} } \Longrightarrow \gamma_{g}Z,
\end{equation}   
where $Z$ is the standard normal random variable and $\gamma_{g}$ is a quantity that will be defined shortly. The FCLT gives a stronger version of such diffusive scaling in the process level. On the other hand, persistence scaling gives the asymptotic behavior as $n\rightarrow \infty$ of the probability that the energy $\rho_{c}^{(a)}(X^{k,\mathbf{p}})$ 
beats its mean $\eta^{(a)}_c k$ for all $1\le k \le n$. 

To make precise statements, fix $c\ge 1$ and $1\le a \le \kappa$, and let $(\mathtt{Z}_{t})_{t\ge 0}$ be the irreducible Markov chain on state space $\Sigma_{0}=\Omega_{0}\times B_{1}^{(1)}$ defined at \eqref{eq:def_MC_Z}. Denote by  $\pi=\pi_{c,c+1}^{(a)}\otimes \mathbf{p}$ its unique stationary distribution. Let $g=g_{\rho}:\Sigma\rightarrow \mathbb{R}$ be the functional defined at \eqref{eq:def_MC_g_rho}. By the independence in $X^{\mathbf{p}}$, it is not hard to see that the following quantity  
\begin{equation}\label{def:limiting_variance}
\gamma_{g}^{2} := \text{Var}_{\pi}[g(\mathtt{Z}_{0})]+2\sum_{k=1}^{\infty}\text{Cov}_{\pi}[g(\mathtt{Z}_{0}),g(\mathtt{Z}_{k})]
\end{equation}
is positive and finite. Here $\gamma^{2}_{g}$ is called the $\textit{limiting variance}$ of the additive process $\sum_{k=0}^{n-1}g(\mathtt{Z}_{k})=\rho_{c}^{(a)}(X^{n,\mathbf{p}})$. Denote by $\gamma_{g}$ the positive square root of $\gamma_{g}^{2}$. 

\begin{theorem}\label{thm:FCLT_persistence}
	Fix integers $c\ge 1$ and $1\le a \le \kappa$. Denote $\bar{\rho}(n)=\rho_{c}^{(a)}(X^{n,\mathbf{p}}) - \eta^{(a)}_c n$ for each $n\ge 1$. Then the followings hold.
	\begin{description}  
		\item[(i)] \textup{(FCLT)} Let $[\bar{\rho}](\cdot):[0,\infty)\rightarrow \mathbb{R}$ denote the linear interpolation of the points $(n,\bar{\rho}(n))_{n\ge 0}$ with setting $\bar{\rho}(0)=0$. Let $C[0,1]$ be the set of all continuous functions $[0,1]\rightarrow \mathbb{R}$. Then for any continuous functional $F:C[0,1]\rightarrow \mathbb{R}$, 
		\begin{equation}
		\lim_{n\rightarrow \infty} \mathbb{E}_{\pi}[ F( n^{-1/2}[\bar{\rho}](ns):0\le s \le 1 ) ] = \mathbb{E}_{\pi}[ F(\gamma_{g} B)],
		\end{equation}
		where $B=(B_{s}\,:\, 0\le s\le 1)$ is the standard Brownian motion.
		\vspace{0.1cm}
		\item[(ii)] \textup{(Persistence)} As $n\rightarrow \infty$,
		\begin{equation}
		\mathbb{P}\left( \bar{\rho}(1)\ge 0,\cdots, \bar{\rho}(n)\ge 0 \,|\, \bar{\rho}(0)=0 \right) \sim \frac{\gamma_{g}}{(1-\eps_{c}^{(a)})\sqrt{2\pi}} n^{-1/2}.
		\end{equation}
	\end{description}
\end{theorem}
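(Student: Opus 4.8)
For part (i) the plan is to realize $\bar\rho(n)=\sum_{k=0}^{n-1}\bar g(\mathtt{Z}_k)$, where $\bar g:=g_\rho-\eta^{(a)}_c$ is a bounded, $\pi$-mean-zero functional of the finite-state irreducible chain $(\mathtt{Z}_t)$ on $\Sigma_0$, and to invoke the martingale functional CLT by Gordin's method. Since $\Sigma_0$ is finite and the chain is irreducible by Lemma \ref{lemma:carrier_irreducibility}, it is uniformly ergodic with a spectral gap, so the Poisson equation $(I-P)h=\bar g$ has a (necessarily bounded) solution $h$, because $\bar g$ lies in the range of $I-P$, namely the $\pi$-mean-zero functions. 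Setting $M_n=\sum_{k=0}^{n-1}\big(h(\mathtt{Z}_{k+1})-(Ph)(\mathtt{Z}_k)\big)$ gives a square-integrable martingale with $\bar\rho(n)=M_n+h(\mathtt{Z}_0)-h(\mathtt{Z}_n)$, where the coboundary is bounded by $2\|h\|_\infty$. The martingale invariance principle then yields $n^{-1/2}M_{\lfloor n\,\cdot\,\rfloor}\Rightarrow\gamma_g B$ with $\gamma_g^2=\EE_\pi\big[(h(\mathtt{Z}_1)-(Ph)(\mathtt{Z}_0))^2\big]$, and one checks this coincides with the Green--Kubo form \eqref{def:limiting_variance}. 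The $O(1)$ coboundary is annihilated by the $n^{-1/2}$ scaling, so $n^{-1/2}[\bar\rho](n\,\cdot\,)$ has the same limit, and the statement for continuous functionals $F$ follows by continuous mapping.

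For part (ii) the plan is to reduce the persistence of the Markov-additive walk $\bar\rho$ to that of an embedded i.i.d.\ random walk via regeneration. By Lemma \ref{lemma:carrier_irreducibility}(ii) the joint carrier returns to the frozen reference state $(U_c^{(a)},U_{c+1}^{(a)})$ infinitely often almost surely; I would take as regeneration epochs $0=\sigma_0<\sigma_1<\cdots$ the successive starts of frozen blocks (on which $X^{\mathbf p}\equiv 0$, as in Proposition \ref{prop:basic_est_thm1}). Then the excursions $(\mathtt{Z}_{\sigma_m},\dots,\mathtt{Z}_{\sigma_{m+1}-1})$ are i.i.d., the inter-regeneration times have exponential tails (finite irreducible chain), and the block increments $\Xi_m:=\bar\rho(\sigma_{m+1})-\bar\rho(\sigma_m)$ are i.i.d., mean-zero, and of finite variance. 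Hence $\widetilde S_m:=\bar\rho(\sigma_m)$ is a centered, finite-variance i.i.d.\ random walk, for which the classical Sparre--Andersen combinatorics together with a Tauberian theorem (equivalently, renewal theory of the strict descending ladder epochs) give $\PP(\widetilde S_1\ge 0,\dots,\widetilde S_m\ge 0)\sim c\,m^{-1/2}$.

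To pass from persistence at the skeleton times $\sigma_m$ to persistence at all times, I would control the within-block minimum $\min_{\sigma_m\le k<\sigma_{m+1}}\big(\bar\rho(k)-\bar\rho(\sigma_m)\big)$, whose tail is exponential; sandwiching the all-time event between two skeleton events at shifted levels shows both persistence probabilities share the same $n^{-1/2}$ order and leading constant after the time change $n\approx \EE[\sigma_1]\,m$. The constant is then pinned down by the Wiener--Hopf/ladder-height identities: the factor $\gamma_g/\sqrt{2\pi}$ is the Gaussian contribution inherited from the FCLT of part (i), while $1/(1-\eps_c^{(a)})$ reflects the reciprocal mean ascending ladder height, which the $\{0,1\}$-valued structure of the energy increment $H$ (with $\EE_\pi[H]=\eps_c^{(a)}$) ties to the stationary probability $1-\eps_c^{(a)}$ that the $B_c^{(a)}$-carrier energy does not increase.

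The main obstacle is the exact constant in (ii), rather than the $n^{-1/2}$ exponent, which is universal. Concretely, the delicate points are: verifying that the block-increment law is non-lattice/aperiodic so the Sparre--Andersen Tauberian step delivers genuine asymptotic equivalence rather than only the correct order; carrying the \emph{leading constant} undistorted through both the regeneration time-change and the skeleton-to-all-time comparison; and establishing that the ladder-height/overshoot quantity emerging from the analysis equals precisely $1-\eps_c^{(a)}$. Part (i) is essentially routine once the Poisson equation is solved; the persistence bookkeeping in part (ii) is where the real work lies.
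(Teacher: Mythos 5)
Your part (i) is correct but self-contained where the paper is not: the paper reduces to the stationary chain by coupling the chain started at $(U_{c}^{(a)},U_{c+1}^{(a)},X^{\mathbf{p}}(1))$ to a stationary copy (the meeting time is a.s.\ finite on the finite irreducible state space $\Sigma_0$) and then simply cites Dedecker--Rio or Meyn--Tweedie for the FCLT. Your Gordin/Poisson-equation argument re-proves that citation; it is sound, and the bounded coboundary $h(\mathtt{Z}_0)-h(\mathtt{Z}_n)$ even dispenses with the stationarity reduction, since for a finite irreducible chain the solvability of $(I-P)h=\bar g$ and the ergodicity of the bracket hold for any initial state. What your route buys is transparency about where $\gamma_g^2$ comes from; what it costs is the (routine but nonzero) verification that $\EE_\pi[(h(\mathtt{Z}_1)-(Ph)(\mathtt{Z}_0))^2]$ equals the Green--Kubo sum \eqref{def:limiting_variance}.

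Part (ii) is where the genuine gap lies. The paper's entire proof of (ii) is the same stationarity reduction followed by an appeal to \cite[Theorem 2]{lyu2018persistence}, a persistence theorem for sums of correlated increments that delivers the asymptotic \emph{with the exact constant} $\gamma_g/\bigl((1-\eps_c^{(a)})\sqrt{2\pi}\bigr)$. Your regeneration/Sparre--Andersen sketch plausibly yields the universal $n^{-1/2}$ order, but it does not establish the constant, and you say so yourself. Concretely: (a) the walk whose persistence is at issue has increments $g_\rho(\mathtt{Z}_k)-\eta_c^{(a)}$ (for $c\ge 2$ this is a \emph{difference} of two local energies), yet the constant in the statement involves $\eps_c^{(a)}=\EE_\pi[H(\Gamma_c^{(a)},x)]$, the mean of the single energy increment $g_E$ --- your heuristic tying $1/(1-\eps_c^{(a)})$ to ``the stationary probability that the carrier energy does not increase'' never connects this to the ladder height of the walk actually built from $g_\rho$; (b) the Wiener--Hopf/ladder-height identities you invoke are for i.i.d.\ walks, and after blocking at regeneration times the ladder height of the blocked walk $\widetilde S_m$ is not the within-step quantity $1-\eps_c^{(a)}$ in any obvious way, so the constant would have to be recovered through the time change $n\approx\EE[\sigma_1]m$ and the skeleton-to-all-time comparison, neither of which you carry out. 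As written, part (ii) proves at best $\PP(\cdots)\asymp n^{-1/2}$, not the stated asymptotic equivalence. To close the gap you would either need to import the Lyu--Sivakoff theorem (as the paper does) or actually execute the ladder-height computation identifying the limiting overshoot distribution for this specific $\{-1,0,1\}$-valued Markov-additive increment.
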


\begin{proof}
	Note that the Markov chain $\mathtt{Z}_{t}$ defined at \eqref{eq:def_MC_Z} has the initial state $(U_{c}^{(a)}, U_{c+1}^{(a)}, X^{\mathbf{p}}(1))$, which is not distributed according to its stationary distribution $\pi$. However, if we let $\mathtt{Z}'_{t}$ denote an independent copy of $\mathtt{Z}_{t}$ started with the initial distribution $\pi$, then we can couple the two chains by independently evolving them until the first time they meet, and jointly evolving them in synchrony thereafter. Since they are irreducible Markov chains on the finite state space $\Sigma_{0}$, such a meeting time is almost surely finite. It follows that it suffices to show the assertion for the stationary version $\mathtt{Z}_{t}'$. For (i), we refer to \cite[Corollary 3]{dedecker2000functional} or \cite[Theorem 17.4.4]{meyn2012markov}; For (ii), see Lyu and Sivakoff \cite[Theorem 2]{lyu2018persistence}.  
\end{proof}

\vspace{0.3cm}
\section{Conditioning on the highest states}
\label{section:highest_states}

In this section we prove Theorem  \ref{thm:SLLN_rows} (ii). The key question we need to address is the following: \textit{How probable it is to sample a highest state from the i.i.d. basic $\kappa$-color BBS configuration $X^{n,\mathbf{p}}$?} This question can be phrased in terms of lattice paths as follows. Let $\mathbf{e}_{1},\cdots, \mathbf{e}_{\kappa+1}$ be the standard basis vectors of the $(\kappa+1)$-dimensional lattice $\mathbb{Z}^{\kappa+1}$. The \textit{Weyl chamber} of dimension $\kappa+1$ is the subset $W_{\kappa+1}:=\{ (x_{1},\cdots, x_{\kappa+1})\in \mathbb{N}_{0}^{\kappa+1}\,:\, x_{1}\ge x_{2} \ge \cdots \ge x_{\kappa+1}  \}$. Given a basic $\kappa$-color BBS configuartion $X_{0}:\mathbb{N}\rightarrow \{0,1,\cdots,\kappa \}$, define an associated $(\kappa+1)$-dimensional lattice path $(S_{k})_{k\ge 0}$ by $S_{0}=(0,\cdots,0)$ and 
\begin{equation}
S_{k}-S_{k-1} = \sum_{i=0}^{\kappa} \mathbf{e}_{i+1} \mathbf{1}(X_{0}(k)=i).
\end{equation}
Namely, if there is a ball of color $i-1\ge 0$ (calling empty box as ball of color $0$) at $k^{\text{th}}$ box, then the walk moves a unit step in the $i^{\text{th}}$ direction. Then observe that $X_{0}$ is a highest state if and only if $S_{k}$ lies entirely in the Weyl chamber.

For example, if we let $\kappa=1$, $\mathbf{p}=(1-p,p)$, and $X_{0}=X^{n,\mathbf{p}}$,  then $S_{k}$ becomes a simple directed random walk in $\mathbb{N}_{0}^{2}$. If $p>1/2$, certainly the probability of sampling a path that is contained in the 2-dimeionsional Weyl chamber $W_{2}$ will decay to zero exponentially fast as $n\rightarrow \infty$. If $0<p<1/2$, then this probability converges to a nontrivial value in $(0,1)$. For the general case $\kappa\ge 1$, when the ball densities strictly decrease, an elementary probabilistic argument shows that $X^{n,\mathbf{p}}$ is a highest state for all $n\ge 1$ with a positive probability. This is the content of the following proposition.

\begin{proposition}\label{prop:prob.highest_strict}
	If $p_{0}>p_{1}>\cdots>p_{\kappa}$, then 
	\begin{equation}
	\mathbb{P}\left( S_{k}\in W_{\kappa+1} \,\, \forall k\ge 0 \right)>0.
	\end{equation}
\end{proposition}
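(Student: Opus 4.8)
The plan is to pass from the vector walk $(S_k)$ to scalar \emph{gap processes} and exploit that each has strictly positive drift. For $1\le i\le \kappa$ set $D_i(k)=(S_k)_i-(S_k)_{i+1}$, the difference between the number of balls of color $i-1$ and color $i$ among the first $k$ sites. Then $\{S_k\in W_{\kappa+1}\ \forall k\}$ is exactly $\{D_i(k)\ge 0\text{ for all }i\text{ and all }k\}$. Reading the increment law off the i.i.d.\ coloring, each $D_i$ is (marginally) a lazy simple random walk that jumps $+1$ with probability $p_{i-1}$, $-1$ with probability $p_i$, and stays put otherwise; since $p_{i-1}>p_i$ its drift $p_{i-1}-p_i$ is strictly positive.

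First I would record a tail estimate. Started from a level $g\ge 1$, a single gap walk $D_i$ ever reaches $0$ with probability $(p_i/p_{i-1})^{g}$ by the classical Gambler's ruin computation for the embedded asymmetric walk (up-probability $p_{i-1}/(p_{i-1}+p_i)>\tfrac12$), so the probability that $D_i$ ever becomes negative is at most $(p_i/p_{i-1})^{g}$, which is geometrically small in $g$ because $p_i/p_{i-1}<1$. Crucially this uses only the one-dimensional \emph{marginal} law of $D_i$, so the coupling among the gap walks is irrelevant here. Fixing $g$ large enough that $\theta:=\sum_{i=1}^{\kappa}(p_i/p_{i-1})^{g}<1$, and writing $\mathbb{P}_v$ for the walk started at a point $v$ all of whose consecutive gaps equal $g$, a union bound over $i$ gives
\begin{equation}
\mathbb{P}_v\big(D_i(k)\ge 0\text{ for all }i,k\big)\ge 1-\theta>0.
\end{equation}

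Next I would supply the ``head'': with positive probability the walk travels from the apex $0$ to such a deep point $v=(\kappa g,(\kappa-1)g,\ldots,g,0)$ while staying in $W_{\kappa+1}$. Since $v$ lies in the strict interior of the chamber, there is a monotone lattice path $0=u_0,u_1,\ldots,u_T=v$ with unit-vector increments remaining in $W_{\kappa+1}$ at every step (a ballot/Yamanouchi word of content $v$); as every color has $p_i>0$, the coloring realizing this path has positive probability. I would then glue the two pieces by the Markov property at the deterministic time $T$: conditioned on following this path to $v$, the future evolves as a fresh walk from $v$, whence
\begin{equation}
\mathbb{P}\big(S_k\in W_{\kappa+1}\ \forall k\big)\ge \mathbb{P}\big(S_t=u_t,\ 0\le t\le T\big)\cdot \mathbb{P}_v\big(D_i(k)\ge 0\text{ for all }i,k\big)>0.
\end{equation}

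The only genuinely delicate point is the dependence among the coordinates $D_1,\ldots,D_\kappa$: one cannot simply multiply individual ``stay nonnegative'' probabilities. The resolution is that the union bound needs only the explicit one-dimensional marginals. A closely related subtlety is that a naive union bound directly from the apex fails, since $\sum_i p_i/p_{i-1}$ may well exceed $1$; this is precisely why the preliminary excursion to a deep interior point $v$—which shrinks each term to $(p_i/p_{i-1})^{g}$—must precede the tail estimate.
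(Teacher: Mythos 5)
Your proof is correct. It follows the same overall head--tail decomposition as the paper: first steer the walk to a point deep inside the Weyl chamber with positive probability, then argue that from such a point the walk stays in the chamber with positive probability because each gap walk has strictly positive drift. The differences are in execution. For the tail, the paper argues softly: for each pair $i<j$ the difference walk $S^{ij}_k$ has positive drift, so the (random) last time it falls to its running minimum is a.s.\ finite, and one truncates at a deterministic $N_1$; you instead use the quantitative gambler's-ruin bound $(p_i/p_{i-1})^g$ for each consecutive gap and a union bound over the $\kappa$ gaps, which requires only the one-dimensional marginals and so sidesteps the dependence among the gap walks entirely --- a point you rightly flag, and which the paper's final display (a product of probabilities of non-independent events) passes over without comment. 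For the head, the paper reaches a random deep point via a large-deviation-free "choose $N_2$ large" step, while you exhibit a deterministic Yamanouchi path to the explicit point $v=(\kappa g,\ldots,g,0)$ and glue by the Markov property at the deterministic time $T$. You also economize by using only the $\kappa$ consecutive gaps rather than all $\binom{\kappa+1}{2}$ pairwise differences, which suffices since the chamber condition is equivalent to nonnegativity of consecutive differences. Net effect: your argument is slightly more explicit and quantitative, and it makes the gluing step airtight where the paper's write-up is informal; the paper's version is shorter and avoids choosing the depth parameter $g$ explicitly.
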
	

\begin{proof}
	For each integer $i,j\ge 1$, define a simple random walk $S_{k}^{ij}$ by $S_{0}^{ij}=0$ and 
	\begin{equation}
	S_{k}^{ij} = \#( \text{balls of color $i$ in $[1,k]$} ) - \#( \text{balls of color $j$ in $[1,k]$} ).
	\end{equation}
	If $i<j$, then the random walk $S_{k}^{ij}$ has positive drift so there exists a finite random time $\tau^{ij}$ such that $S_{k}-S_{\tau^{ij}}\ge 0$ for all $k\ge \tau^{ij}$. Define a random time $\tau=\max_{0\le j<i\le \kappa}( \tau^{ij})$. Then $\tau$ is almost surely finite, so we may choose $N_{1}\ge 1$ large enough so that 
	\begin{equation}\label{eq:highest_strict_1}
	\mathbb{P}(\tau \le N_{1}) >\delta_{1}>0.
	\end{equation} 
	By using independence of the ball colors, choose $N_{2}\ge 1$ large enough so that 
	\begin{equation}
	\mathbb{P}\left( \text{$S^{ij}_{N_{2}} > 2N_{1}$ for all $0\le i<j\le \kappa$} \right) > \delta_{2}>0. 
	\end{equation}
	In other words, $S_{N_{2}}\in W_{\kappa+1}$ and is at least $2N_{1}$ steps away from the boundary of $W_{\kappa+1}$. Hence $S_{N_{1}+N_{2}}\in W_{\kappa+1}$ with probability $>\delta_{2}$. Since the restarted lattice path $(S_{k})_{k> N_{2}}$ has the same law as $(S_{k})_{k\ge 0}$ by the Markov property, by using the independence of increments and  (\ref{eq:highest_strict_1}), we get  
	\begin{align}
	&\mathbb{P}\left( \text{$S_{k}\in W_{\kappa+1}$ for all $k\ge 0$ } \right) \\
	&\qquad \ge \mathbb{P}(S_{N_{1}+N_{2}}\in W_{\kappa+1}) \mathbb{P}\left( S_{k+N_{1}}-S_{N_{1}+N_{2}} \in W_{\kappa+1}\,\, \forall k\ge N_{2} \right) \ge \delta_{1}\delta_{2}>0.
	\end{align}
	This shows the assertion.
\end{proof}

\begin{remark}
	When $\kappa=1$ and $p_{0}>p_{1}$, a simple application of Wald's equation \cite[Exercise 4.1.13]{Durrett} gives 
	\begin{equation}
	\mathbb{P}(S_{k}\in W_{2} \,\, \forall k\ge 0) = \frac{2p_{0}-1}{p_{0}}.
	\end{equation}
\end{remark}

Now consider the critical case $p_{1}=1/2$ when $\kappa=1$. Then the resulting 2-dimensional lattice walk has no bias, and the probability that $S_{k}\in W_{2}$ for all $k\ge 1$ is the same as the `survival probability' of a simple symmetric random walk, and the following asymptotic is known (see, e.g.,  \cite[Theorem XII.7.1a]{feller1971introduction}):
\begin{equation}
\mathbb{P}(\text{$(S_{k})_{1\le k \le n}$ is contained in $W_{2}$}) \sim \sqrt{\frac{2}{\pi n}}.
\end{equation}
In general, if some of the inequalities $p_{0}\ge p_{1}\ge \cdots \ge p_{\kappa}$ are equalities, then we expect the probability that $S_{k}$ stays in $W_{\kappa+1}$ for $n$ steps decays to zero at a polynomial rate. This claim can be justified by using a high dimensional Ballot theorem, as we shall explain below. 

Suppose $p_{1}=1/2$ and after $n$ steps we knew that there were $n_{0}$ empty boxes and $n_{1}$ balls in the initial configuration. Then what is the probability of a highest state given that there are $n_{0}$ empty boxes and $n_{1}$ balls? This is a classic problem in probability theory known as the Ballot problem (see., e.g., \cite[Ch. III.1]{feller1957introduction}). This problem has been generalized to the higher dimensions and addressed in \cite{zeilberger1983andre}, giving a general Ballot theorem that enumerates the number of lattice paths within the Weyl chamber from the origin to a fixed destination. Namely, for each point $\mathbf{m}=(m_{1},\cdots,m_{r})\in W_{r}$ in the $r$-dimensional Weyl chamber, let $G(\mathbf{m})$ be the number of lattice paths from the origin to $\mathbf{m}$ that is contained in the Weyl chamber. Then
\begin{equation}\label{eq:ballot_ndim}
G(\mathbf{m}) = (m_{1}+\cdots+m_{r})!  \frac{\prod_{1\le i<j\le r}(m_{i}-m_{j}+j-i)}{(m_{1}+r-1)! (m_{2}+r-2)! \cdots m_{r}!}.
\end{equation}
Using this enumeration, we can estimate the probability of sampling a highest state from an i.i.d. BBS configuration.  

\begin{proposition}\label{prop:prob_highest}
	Suppose $p_{0}\ge p_{1}\ge \cdots \ge p_{\kappa}$. Then there exists constants $c_{1},c_{2}>0$ such that for all $n\ge 1$,
	\begin{equation}
	c_{1}n^{-a/2} \le \mathbb{P}( \text{$X^{n,\mathbf{p}}$ is a highest state}) \le c_{2} n^{-a/2},
	\end{equation}
	where $a=\sum_{0\le i < j \le \kappa} \mathbf{1}(p_{i}=p_{j})$.
\end{proposition}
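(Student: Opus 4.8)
Write $r=\kappa+1$ and let $\mathbf M=(M_1,\dots,M_r)$ be the vector of color counts of $X^{n,\mathbf p}$, so $M_i=\#\{k\le n:X^{n,\mathbf p}(k)=i-1\}$ is multinomial with parameters $n$ and $(p_0,\dots,p_\kappa)$. Since $X^{n,\mathbf p}$ is highest iff the associated path $(S_k)$ stays in $W_r$, summing the multinomial weight against the number $G(\mathbf m)$ of chamber paths to each endpoint and dividing $G(\mathbf m)\prod_i p_{i-1}^{m_i}$ by $\mathbb P(\mathbf M=\mathbf m)$ (using $(m_i+r-i)!/m_i!=\prod_{k=1}^{r-i}(m_i+k)$) turns the ballot formula \eqref{eq:ballot_ndim} into the exact identity
\begin{equation}
\mathbb P(\text{$X^{n,\mathbf p}$ is highest})=\mathbb E\!\left[R_n\,\mathbf 1(\mathbf M\in W_r)\right],\qquad R_n:=\frac{\prod_{1\le i<j\le r}(M_i-M_j+j-i)}{\prod_{i=1}^{r}\prod_{k=1}^{r-i}(M_i+k)}.
\end{equation}
On $\{\mathbf M\in W_r\}$ every factor of $R_n$ is positive, so the integrand is nonnegative. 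The plan is to read off the polynomial order of this expectation by a law-of-large-numbers/central-limit analysis of $\mathbf M$.

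Next I would group the indices into the maximal blocks on which $\mathbf p$ is constant: across distinct blocks the probabilities strictly decrease, while within a block of size $b$ they are equal, so $a=\sum_{\text{blocks}}\binom{b}{2}$ is precisely the number of pairs $i<j$ with $p_{i-1}=p_{j-1}$; call these the \emph{within} pairs and the remaining $\binom{r}{2}-a$ pairs the \emph{across} pairs. By the law of large numbers $M_i+k\sim np_{i-1}$, so the denominator has order $n^{\binom{r}{2}}$. An across pair has $\mathbb E[M_i-M_j]=n(p_{i-1}-p_{j-1})>0$, hence $M_i-M_j$ is of order $n$; a within pair has mean $0$, hence $M_i-M_j$ is of order $\sqrt n$, with $(M_i-M_j)/\sqrt n$ converging jointly to a centered Gaussian vector by the multivariate central limit theorem. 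Thus the numerator is of order $n^{\binom{r}{2}-a}\cdot n^{a/2}=n^{\binom{r}{2}-a/2}$, and rescaling gives $R_n=n^{-a/2}\tilde R_n$ with
\begin{equation}
\tilde R_n=\frac{\prod_{\text{across}}\frac{M_i-M_j+j-i}{n}\;\prod_{\text{within}}\frac{M_i-M_j+j-i}{\sqrt n}}{\prod_{i=1}^{r}\prod_{k=1}^{r-i}\frac{M_i+k}{n}}.
\end{equation}
This already identifies the exponent $-a/2$; it remains to sandwich $\mathbb E[\tilde R_n\,\mathbf 1(\mathbf M\in W_r)]$ between two positive constants.

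For the upper bound I would split on $G_n=\{M_i\ge\varepsilon n\ \forall i\}$. Its complement has exponentially small probability by a multinomial large-deviation estimate, while there $R_n\le n^{\binom{r}{2}}$ crudely, contributing $o(n^{-a/2})$. On $G_n$ the denominator factors of $\tilde R_n$ exceed $\varepsilon$, and the across factors $(M_i-M_j)/n$ and within factors $(M_i-M_j)/\sqrt n$ have all moments bounded uniformly in $n$ (Gaussian scale and concentration, respectively); a H\"older estimate then bounds $\mathbb E[\tilde R_n\mathbf 1(G_n\cap W_r)]$ by a constant. For the lower bound I would exhibit a fixed positive-probability event on which all $M_i\ge\varepsilon n$, all across differences lie in $[\varepsilon n,n]$, and the rescaled within differences $(M_i-M_j)/\sqrt n$ lie in a fixed compact subset of the interior of the within-block chambers; this carries positive Gaussian mass precisely because $p_0\ge\cdots\ge p_\kappa$ forces the mean endpoint $n\mathbf p$ onto the within-walls of $W_r$, so the limiting Gaussian charges these chambers. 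On that event $\mathbf M\in W_r$ and $\tilde R_n$ is bounded below by a positive constant, giving $\mathbb E[\tilde R_n\mathbf 1(\mathbf M\in W_r)]\ge c_1'>0$.

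The main obstacle is the uniform integrability in the upper bound: near the chamber walls the indicator is still $1$ while individual factors of $\tilde R_n$ degenerate, and the across factors have unbounded upper tails, so one cannot argue by naive dominated convergence and must instead control $\mathbb E[\tilde R_n\mathbf 1(G_n\cap W_r)]$ through moment bounds. Carrying this out rigorously (and, if the sharp constant is wanted, upgrading the two one-sided estimates to $\mathbb E[\tilde R_n\mathbf 1(\mathbf M\in W_r)]\to C\in(0,\infty)$ via the local central limit theorem) is the technical heart; the block decomposition and the exponent count are routine once the expectation representation is in hand.
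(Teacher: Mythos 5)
Your proposal is correct and follows essentially the same route as the paper's own proof, which converts the ballot-theorem count into a sum of the multinomial weights against the ratio of pairwise differences to rising factorials and reads off the exponent $-a/2$ from the $O(n)$ scaling of across-block differences versus the $O(\sqrt{n})$ scaling of within-block differences. The paper only sketches this for $\kappa=2$, so your explicit expectation identity, the block decomposition, and the discussion of uniform integrability for the upper bound and positive Gaussian mass for the lower bound supply details the paper omits.
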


\begin{proof}[Sketch of proof]
	For simplicity of the argument, we show the assertion for the special case of $\kappa=2$. Observe that each sample path with $m_{0}$ empty boxes, $m_{1}$ balls of color 1, and $m_{2}$ balls of color 2 occurs with the same probability $p_{0}^{m_{1}} p_{1}^{m_{2}} p_{2}^{m_{3}}$. Hence using \eqref{eq:ballot_ndim}, we can write 
	\begin{eqnarray}
	&&\mathbb{P}( \text{$X^{n,\mathbf{p}}$ is a highest state}) \\
	&&\qquad  = \sum_{\substack{m_{1}\ge m_{2} \ge m_{3}\ge 0\\ m_{1}+m_{2}+m_{3}=n}} \frac{(m_{1}-m_{2}+1)(m_{2}-m_{3}+1) (m_{1}-m_{3}+2)}{ (m_{1}+1)(m_{1}+2)(m_{2}+1)} p_{0}^{m_{1}} p_{1}^{m_{2}} p_{2}^{m_{3}}  .
	\end{eqnarray} 
	By law of large numbers, each ball count $m_{i}$ is of order $O(n)$. If $p_{0}>p_{1}>p_{2}$, then each $m_{i}-m_{j}$ for $0\le i<j\le 2$  grows asymptotically linearly in $n$, so the above summation is of order $O(1)$. On the other extreme, suppose $p_{0}=p_{1}=p_{2}=1/3$. Then the sum gets the most contribution when each $m_{i}-m_{j}$ is of order $O(\sqrt{n})$, so in this case the sum is of order $O(n^{-3/2})$. For the general case, each term $m_{i}-m_{j}=O(\sqrt{n})$ contributes a factor of $O(n^{-1/2})$, so the general asymptotic formula follows in this manner. 
\end{proof}

Now we show Theorem \ref{thm:SLLN_rows} (ii) from Theorem \ref{thm:row_concentration} and Proposition \ref{prop:prob_highest}.

\begin{proof}[\textbf{Proof of Theorem \ref{thm:SLLN_rows} (ii)}.]
	
	Let $\Lambda^{*}$ be the rate function in Theorem  \ref{thm:row_concentration}, which is positive in an open neighborhood of $\eta_{c}^{(a)}$ defined in (\ref{eq:thm1_eq1}).
	Write 
	\begin{equation}\label{cdk}
	\tilde{\Lambda}^{*}(u) = \min( \Lambda^{*}(\eta_{c}^{(a)}-u),  \Lambda^{*}( \eta_{c}^{(a)}+u)).
	\end{equation}
	Fix $0\le u \le \eta_{c}^{(a)}$ and let $I = [0,\eta_{c}^{(c)}-u)\cup (\eta_{c}^{(a)}+u,\infty)$. Then Theorem \ref{thm:row_concentration} yields that 
	\begin{align}
		\lim_{n\rightarrow \infty} n^{-1}\log  \mathbb{P}\left( \left| n^{-1}\rho_{c}^{(a)}(X^{n,\mathbf{p}}) - \eta_{c}^{(a)} \right|\ge u \right) = -\inf_{s \in I}\Lambda^{*}(s) = \tilde{\Lambda}^{*}(u),
	\end{align}
	where the last equality uses the fact that $\Lambda^{*}$ is non-increasing in $[0,\eta_{c}^{(a)})$ and non-decreasing in $[\eta_{c}^{(a)},\infty)$. Hence there exists $N(u)\ge 1$ such that 
	\begin{align}
	\mathbb{P}\left( \left| n^{-1}\rho_{c}^{(a)}(X^{n,\mathbf{p}}) - \eta_{c}^{(a)} \right|\ge u \right) &= \exp\left[  n\left( n^{-1}\log  \mathbb{P}\left( \left| n^{-1}\rho_{c}^{(a)}(X^{n,\mathbf{p}}) - \eta_{c}^{(a)} \right|\ge u \right) \right) \right]\\
	&\le \exp\left( {-(\tilde{\Lambda}^{*}(u)/2)n}\right)
	\end{align}
	for all $n\ge N(u)$.

	Suppose $p_{0} \ge p_{1}\ge \cdots\ge p_{\kappa}$. Then by Proposition \ref{prop:prob_highest}, there exists a constant $c_{1}>0$ such that  
	\begin{align}
	\mathbb{P}\left( \left| n^{-1}\rho_{c}^{(a)}(X^{n,\mathbf{p}}) - \eta_{c}^{(a)} \right|\ge u \,\bigg|\, \text{$X^{n,\mathbf{p}}$ is highest}  \right) &\le \frac{\mathbb{P}\left(  \left| n^{-1}\rho_{c}^{(a)}(X^{n,\mathbf{p}}) - \eta_{c}^{(a)} \right|\ge u \right) }{ \mathbb{P}( \text{$X^{n,\mathbf{p}}$ is highest} ) }\\
	&\le  c_{1} n^{\kappa(\kappa+1)/2}  \exp(- (\tilde{\Lambda}^{*}(u)/2) n), \label{eq:conditioned_concentration_eq}
	\end{align}
	for sufficiently small $u>0$ and all $n\ge N(u)$.

	By using the continuity of $\Lambda^{*}$ at $\eta_{c}^{(a)}$ and its positivity near $\eta_{c}^{(a)}$, we may choose a sequence $u_{r}>0$ such that $u_{r}\rightarrow 0$ and $\tilde{\Lambda}^{*}(u_{r})\ge r^{-1/2}$ as $r\rightarrow \infty$. Then we have 
	\begin{align}
	\mathbb{P}\left( \left| n^{-1}\rho_{c}^{(a)}(X^{n,\mathbf{p}}) - \eta_{c}^{(a)} \right|\ge u_{r} \,\bigg|\, \text{$X^{n,\mathbf{p}}$ is highest}  \right) &\le  c_{1} n^{\kappa(\kappa+1)/2} \exp(- n/\sqrt{r})
	\end{align}
	for all $r\ge 1$ and $n\ge N(u_{r})$. Note that the right hand side is summable for any sequence $n=n_{r}$ as long as $n_{r}\rightarrow \infty$ as $r\rightarrow \infty$. This implies that for any sequence $(n_{r})_{r\ge 1}$ of integers such that $n_{r}\ge N(u_{r})$, the above probabilities are summable over all $n_{r}$. By Borel-Cantelli lemma, this yields 
	\begin{equation}\label{eq:corollary_last}
	n^{-1}\rho_{c}^{(a)}(X^{n_{r},\mathbf{p}}) \rightarrow \eta_{c}^{(a)}
	\end{equation}
	almost surely as $r\rightarrow \infty$. It follows that if we are given any sequence $n_{k}\rightarrow \infty$, we may take a further subsequence $n_{r}:=n_{k_{r}}\rightarrow \infty$ along which we have the above almost sure convergence to the common value $\eta_{c}^{(a)}$. This shows the assertion. 
\end{proof}

\begin{remark}\label{remark:asymptotic_equivalence}
	The conclusion of Theorem \ref{thm:SLLN_rows} can be schematically summarized as 
	\begin{equation}\label{eq:asymptotic_equivalence}
	\text{BBS started with $X^{n,\mathbf{p}}$}\quad  \simeq \quad 
	\begin{matrix}
	\text{BBS started with $X^{n,\mathbf{p}}$}\\
	\text{conditioned on the highest states}
	\end{matrix}
	\end{equation}
	provided $p_{0}\ge p_{1}\ge \cdots \ge p_{\kappa}$, where $\simeq$ means that the rescaled energy matrices have the same limit. In \cite{kuniba2018randomized}, we adopted this as a hypothesis for the generalized BBS initialized at a similar product measure on $(B_{s}^{(r)})^{\mathbb{N}}$. Despite irreducibility of the corresponding carrier process in this general situation is still in question, we suspect a large deviations principle can be established in the general case as in Theorem \ref{thm:row_concentration}. Then the hypothesis of asymptotic equivalence is justified if the probability that $X^{n,\mathbf{p}}$ is a highest state decays sub-exponentially in $n$, following a similar method we used in the proof of Theorem \ref{thm:SLLN_rows} (ii). We leave verification of this for a future work.
\end{remark}

\vspace{0.3cm}

\section{Scaling limit of Young diagrams from TBA}
\label{section:TBA}

In this section, we present an alternative derivation of the scaling limit of 
the invariant Young diagrams by Thermodynamic Bethe Ansatz (TBA). 
In contrast to the Markov chain method giving the asymptotic marginal distribution of the row lengths, 
the TBA is based on the full joint distribution of the Young diagrams 
given by the {\em Fermionic form} for highest states (Theorem \ref{thm4:gibbs_measure_YDs}).
The resulting equilibrium shape of the Young diagrams is shown to agree nontrivially 
with the Markov chain method. 
Such results have been obtained in a more general setting in 
\cite{kuniba2018randomized}, 
where each site is occupied with an $r\times s$ tableau.
The content of this section corresponds to $r=s=1$ but 
includes more detailed results in a one-parameter specialization.
Namely, we will present the exact scaling form not only for the row length but
also the vacancy (defined in (\ref{a:va})), the column multiplicity, and the 
depth of the Young diagrams.
They all exhibit nontrivial factorization, which is reported here for the first time.

Throughout this section, we assume $p_{0}>p_{1}>\cdots>p_{\kappa}$ and consider the basic $\kappa$-color BBS started at a random initial configuration $X^{n,\mathbf{p}}$ conditioned on the highest state.

\subsection{Fermionic formula and grand canonical ensemble }\label{sb:tatki}

When a basic $\kappa$-color BBS is started at a highest state, extracting the $\kappa$-tuple of invariant Young diagram $(\mu^{(1)}, \cdots, \mu^{(\kappa)})$ can be done most universally by the map called Kerov-Kirillov-Reshetikhin bijection\footnote{It was invented in a very different context in 1980's and is later found \cite{kuniba2006crystal} to {\em linearize} the BBS dynamics.} \cite{kerov1988combinatorics}. The bijection is defined recursively and complicated in general, so we refer to  \cite[Section 2.7]{kuniba2006crystal} for details.

Let us introduce the following quantities (See Figure \ref{fig:TBA_notation} for illustration). 
\begin{align}
&m^{(a)}_i = \text{number of the length $i$ columns in $\mu^{(a)}$},
\label{mimi}\\
&C_{ab} = 2\delta_{a,b}-\delta_{a,b-1}-\delta_{a,b+1}, \label{Cab}\\
&E^{(a)}_i=\sum_{j \ge 1}\min(i,j)m^{(a)}_j = 
\rho_{1}^{(a)} + \rho_{2}^{(a)}+ \cdots + \rho_{i}^{(a)},
\label{Edef}\\
&v^{(a)}_i= n\delta_{a,1} - 
\sum_{b=1}^\kappa C_{ab} E^{(b)}_{i}.
\label{a:va}
\end{align}
Here $\delta_{a,b}=\mathbf{1}(a=b)$ is the Kronecker delta, the matrix $(C_{ab})_{1\le a,b\le \kappa}$ is the Cartan matrix of $sl_{\kappa+1}$, and the quantity $v^{(a)}_i$ is called \textit{vacancy}. 
Recall that (\ref{Edef}) is the defining relation of the 
multiplicity $m_{j}^{(a)}$ of the length $j$ columns of $\mu^{(a)}$, 
and $E_{i}^{(a)}=E_{i}^{(a)}(X_{0})$ means the row transfer matrix energy.

%\iffalse
\begin{figure*}[h]
	\centering
	\includegraphics[width=8cm,keepaspectratio]{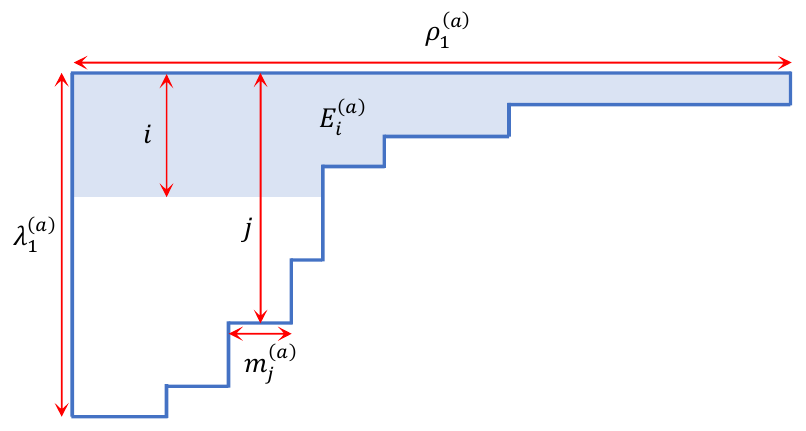}
	\caption{ Illustration of notation for the $a^{\text{th}}$ Young diagram $\mu^{(a)}$. 
	$\rho^{(a)}_i =$ length of the $i^{\text{th}}$ row,
	$m^{(a)}_j =$ multiplicity of length $j$ columns, 
	$E_{i}^{(a)} = $ area of the shaded region.
	}
	\label{fig:TBA_notation}
\end{figure*}
%\fi

It is known that the number of highest states  
corresponding to the prescribed $\kappa$-tuple of Young diagrams 
$(\mu^{(1)}, \cdots, \mu^{(\kappa)})$ can be written down explicitly as the so-called 
\textit{Fermionic form} (cf. \cite{hatayama248remarks}):
\begin{align}\label{a:ff}
\prod_{a=1}^\kappa\prod_{i \ge 1}
\binom{v^{(a)}_i + m^{(a)}_i}{m^{(a)}_i},
\end{align}
where $v_{i}^{(a)}$ and $m_{i}^{(a)}$ for the $a^{\text{th}}$ Young diagram $\mu^{(a)}$ are as in \eqref{mimi} and \eqref{a:va}, respectively. 

Since $m^{(a)}_i=0$ for sufficiently large $i$,
the apparent infinite product (\ref{a:ff}) is convergent. For (\ref{a:ff}) not to vanish, $v^{(a)}_i \ge 0$ must hold, which imposes nontrivial constraints among $\mu^{(1)}, \cdots, \mu^{(\kappa)}$. 
From \eqref{Edef} and \eqref{a:va}, we set
\begin{align}\label{einf}
v^{(a)}_\infty = n\delta_{a,1}- \sum_{b=1}^{\kappa} C_{ab}|\mu^{(b)}|,\qquad
E^{(a)}_\infty = \sum_{i\ge 1} i m^{(a)}_i = |\mu^{(a)}|.
\end{align}
From \cite{kuniba2006crystal} we recall that the size of the Young diagrams $(\mu^{(1)}, \cdots, \mu^{(\kappa)})$ are related to the color content of the BBS initial state $X_{0}$ by 
\begin{align}\label{a:ma}
|\mu^{(a)}| = \#( \text{balls of color $\ge a$ in $X_{0}$} )  \qquad \text{$\forall 1\le a \le \kappa$}.
\end{align}

Now we can derive the probability measure on 
the set of $\kappa$-tuple of Young diagrams 
$(\mu^{(1)}, \cdots, \mu^{(\kappa)})$. The following result is a special case of eq. (43) in \cite{kuniba2018randomized}, but below we include a rigorous and self-contained proof.

\begin{theorem}\label{thm4:gibbs_measure_YDs}
	For each $\kappa$-color BBS configuration $X$, let $\Sigma(X)$ denote its associated $\kappa$-tuple of invariant Young diagrams. Then for every $\kappa$-tuple of Young diagrams $(\mu^{(1)},\cdots, \mu^{(\kappa)})$, 
	\begin{equation}\label{eq:thm4_Gibbs}
	\mathbb{P}\left(\Sigma(X^{n,\mathbf{p}})=(\mu^{(1)},\cdots, \mu^{(\kappa)}) \,\bigg|\, \text{$X^{n,\mathbf{p}}$ is highest} \right) = \frac{1}{Z_{n}}
	e^{-\sum_{a=1}^\kappa \beta_a\sum_{i\ge 1}i m^{(a)}_i}
	\prod_{a=1}^\kappa \prod_{i \ge 1}
	\left({v^{(a)}_i +  m^{(a)}_i \atop m^{(a)}_i}\right),
	\end{equation}
	where the chemical potentials $\beta_{a}$ are defined by $e^{\beta_{a}}=p_{a-1}/p_{a}$ for $1\le a\le \kappa$ and $Z_{n}$ is the normalization constant.
\end{theorem}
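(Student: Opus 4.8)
The plan is to express the joint event $\{\Sigma(X^{n,\mathbf{p}})=(\mu^{(1)},\dots,\mu^{(\kappa)})\}\cap\{X^{n,\mathbf{p}}\text{ highest}\}$ as a disjoint union over the individual highest words $X:[1,n]\to\{0,\dots,\kappa\}$ whose invariant Young diagrams are exactly $(\mu^{(1)},\dots,\mu^{(\kappa)})$, and then to factor its probability as (number of such words) $\times$ (common i.i.d.\ weight of each word). Conditioning then amounts to normalizing this product.

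First I would invoke the Kerov--Kirillov--Reshetikhin bijection \cite{kerov1988combinatorics} together with the Fermionic formula \cite{hatayama248remarks}: the number of highest words of system size $n$ whose associated $\kappa$-tuple of invariant Young diagrams equals $(\mu^{(1)},\dots,\mu^{(\kappa)})$ is exactly the Fermionic form \eqref{a:ff}, namely $\prod_{a=1}^\kappa\prod_{i\ge1}\binom{v^{(a)}_i+m^{(a)}_i}{m^{(a)}_i}$. Here the dependence on $n$ enters solely through the vacancies $v^{(a)}_i$ via \eqref{a:va}. Under the bijection a fixed configuration $(m^{(a)}_i)$ of column multiplicities admits precisely $\prod_{a,i}\binom{v^{(a)}_i+m^{(a)}_i}{m^{(a)}_i}$ admissible riggings, which is the desired count.

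Next I would observe that every highest word with this fixed tuple carries the same i.i.d.\ probability. Indeed, the probability of a word $X$ is $\prod_{i=0}^\kappa p_i^{\#\{x:\,X(x)=i\}}$, and by \eqref{a:ma} the number of balls of color exactly $a$ equals $|\mu^{(a)}|-|\mu^{(a+1)}|$ (with $|\mu^{(\kappa+1)}|=0$), while the number of empty boxes is $n-|\mu^{(1)}|$. Hence the common weight is $w=p_0^{\,n-|\mu^{(1)}|}\prod_{a=1}^\kappa p_a^{\,|\mu^{(a)}|-|\mu^{(a+1)}|}$, depending only on the sizes $|\mu^{(a)}|$. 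Collecting the exponent of each $p_j$ and using $e^{\beta_a}=p_{a-1}/p_a$ gives $w=p_0^{\,n}\,e^{-\sum_{a=1}^\kappa\beta_a|\mu^{(a)}|}$; combined with $\sum_{i\ge1} i\,m^{(a)}_i=|\mu^{(a)}|$ from \eqref{Edef}, this is $p_0^{\,n}\,e^{-\sum_a\beta_a\sum_i i m^{(a)}_i}$. Thus the joint probability equals $p_0^{\,n}\,e^{-\sum_a\beta_a\sum_i i m^{(a)}_i}\prod_{a,i}\binom{v^{(a)}_i+m^{(a)}_i}{m^{(a)}_i}$, and dividing by $\mathbb{P}(X^{n,\mathbf{p}}\text{ highest})$ (the sum of this over all tuples) cancels the $n$-only factor $p_0^n$ and yields \eqref{eq:thm4_Gibbs} with $Z_n=\sum e^{-\sum_a\beta_a\sum_i i m^{(a)}_i}\prod_{a,i}\binom{v^{(a)}_i+m^{(a)}_i}{m^{(a)}_i}$.

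The only nonroutine ingredient, and hence the main obstacle, is the enumerative input: verifying that the Fermionic form \eqref{a:ff} counts exactly the highest words of fixed system size $n$ with the prescribed invariant Young diagrams, with the $n$-dependence encoded purely in the vacancies \eqref{a:va}. Everything downstream --- the constancy of the weight across words sharing the same tuple, the algebraic reduction to Gibbs form, and the cancellation of $p_0^n$ --- is a direct computation. For a self-contained treatment I would make the correspondence between highest words and rigged configurations explicit so that the vacancy $v^{(a)}_i$ is identified as the range of each rigging and the binomial as the number of weakly decreasing riggings in $[0,v^{(a)}_i]$ of length $m^{(a)}_i$.
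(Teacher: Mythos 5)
Your proposal is correct and follows essentially the same route as the paper's proof: decompose the event over individual highest words with the prescribed tuple, use \eqref{a:ma} and \eqref{einf} to show each such word carries the common weight $p_0^{\,n}e^{-\sum_a\beta_a\sum_i i m^{(a)}_i}$, invoke the Fermionic form \eqref{a:ff} as the count of such words, and normalize. The paper likewise treats the Fermionic formula as the cited enumerative input rather than reproving it, so no further work is needed.
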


\begin{proof}
	Let $\mathcal{H}_{n}$ denote the set of all highest $\kappa$-color BBS configuration that has support in $[0,n]$. For each $X\in \mathcal{H}_{n}$ and $0\le i \le \kappa$, we refer to the quantity $\sum_{x=0}^{n}\mathbf{1}(X(x)=i)$ as the number of $i$'s in $X$. Fix $X\in \mathcal{H}_{n}$ such that $\Sigma(X)=(\mu^{(1)},\cdots,\mu^{\kappa})$. According to \eqref{a:ma}, the number of empty boxes and the color $a$ balls contained in a BBS state are $n - |\mu^{(1)}|$ and $|\mu^{(a)}|-|\mu^{(a+1)}|$ for $a=1,\cdots, \kappa$ with $|\mu^{(\kappa+1)}|=0$. Denote $e^{\beta_{a}} = p_{a-1}/p_{a}$ for all $1\le a \le \kappa$ as in the statement of the assertion. Using \eqref{a:ma} and the second half of \eqref{einf}, we can write  
	\begin{align}
	p_{0}^{\text{$\#$ of $0$'s in $X$}}\cdots p_{\kappa}^{\text{$\#$ of $\kappa$'s in $X$}}  &= p_0^{n - |\mu^{(1)}|}p_1^{|\mu^{(1)}|-|\mu^{(2)}|} \cdots 
	p_{\kappa-1}^{|\mu^{(\kappa-1)}|-|\mu^{(\kappa)}|}
	p_\kappa^{|\mu^{(\kappa)}|} \\
		&= p_{0}^{n} e^{-\beta_1|\mu^{(1)}|-\cdots - \beta_\kappa|\mu^{(\kappa)}|}\\
		&= p_{0}^{n} e^{-\sum_{a=1}^\kappa \beta_a\sum_{i\ge 1}i m^{(a)}_i}\\
		&=  p_{0}^{n}e^{-\sum_{a=1}^\kappa E_{\infty}^{(a)} \beta_a}.
	\end{align}
	Now note that 
	\begin{align}
		&\PP\left( \Sigma(X^{n,\mathbf{p}})=(\mu^{(1)},\cdots,\mu^{(\kappa)}) \,\bigg|\, \text{$X^{n,\mathbf{p}}$ is highest} \right) \\
		&\qquad = \sum_{\substack{X\in \mathcal{H}_{n} \\ \Sigma(X)=(\mu^{(1)},\cdots, \mu^{(\kappa)})}} \PP\left( X^{n,\mathbf{p}}=X \,\bigg|\, \text{$X^{n,\mathbf{p}}$ is highest} \right) \\
		&\qquad = \sum_{\substack{X\in \mathcal{H}_{n}\\ \Sigma(X)=(\mu^{(1)},\cdots, \mu^{(\kappa)})}} \frac{\PP\left( X^{n,\mathbf{p}}(x)=X(x)\,\, \forall x\in [0,n]\right)}{\PP\left(\text{$X^{n,\mathbf{p}}$ is highest}  \right)} \\
		&\qquad = \frac{1}{\PP\left(\text{$X^{n,\mathbf{p}}$ is highest}\right)} \sum_{\substack{X\in \mathcal{H}_{n}\\ \Sigma(X)=(\mu^{(1)},\cdots, \mu^{(\kappa)})}} p_{0}^{\text{$\#$ of $0$'s in $X$}}\cdots p_{\kappa}^{\text{$\#$ of $\kappa$'s in $X$}} \\
		&\qquad = \frac{p_{0}^{n}}{\PP\left(\text{$X^{n,\mathbf{p}}$ is highest}\right)} \sum_{\substack{X\in \mathcal{H}_{n}\\ \Sigma(X)=(\mu^{(1)},\cdots, \mu^{(\kappa)})}} e^{-\sum_{a=1}^\kappa E_{\infty}^{(a)} \beta_a}.
	\end{align}
	Since the summand in the last expression does not depend on the choice of $X$ as long as $\Sigma(X)=(\mu^{(1)},\cdots,\mu^{(\kappa)})$, this and the Fermionic form \eqref{a:ff} yield 
	\begin{align}
	\PP\left( \Sigma(X^{n,\mathbf{p}})=(\mu^{(1)},\cdots,\mu^{(\kappa)}) \,\bigg|\, \text{$X^{n,\mathbf{p}}$ is highest} \right)=\frac{1}{Z_{n}} e^{-\sum_{a=1}^\kappa E_{\infty}^{(a)} \beta_a} \prod_{a=1}^\kappa \prod_{i \ge 1}
	\left({v^{(a)}_i +  m^{(a)}_i \atop m^{(a)}_i}\right)
	\end{align}
	with the partition function 
	\begin{equation}\label{gcp}
	Z_n =\sum_{m}
	e^{-\sum^\kappa_{a=1}E^{(a)}_\infty \beta_a}
	\prod_{a=1}^\kappa \prod_{i \ge 1}
	\left({v^{(a)}_i +  m^{(a)}_i \atop m^{(a)}_i}\right),
	\end{equation}
	where the outer sum extends over 
	$m =(m^{(a)}_i)_{(a,i) \in [1,\kappa]\times \in \mathbb{N}}$. This shows the assertion.
\end{proof}

Note that the right hand side of \eqref{gcp} depends on the system size $n$ via (\ref{a:va}). The parameters $\beta_1,\cdots, \beta_\kappa$ play the role of chemical potentials or inverse temperatures in the context of the generalized Gibbs ensemble.

\subsection{Y-system as the equilibrium condition}
\label{subsection:Y-system}

Our next aim is to determine the `equilibrium', i.e., most probable tuple of Young diagrams $(\mu^{(1)}, \cdots, \mu^{(\kappa)})$ under the probability distribution (\ref{eq:thm4_Gibbs}) in the large $n$ limit. It will be done by the method of grand canonical ensemble 
with the partition function given in \eqref{gcp}. 

Hereafter, for any two sequences of random variables $A_{n}$ and $B_{n}$, we write $A_{n}\sim B_{n}$ if $A_{n}/B_{n}$ converges to some constant almost surely as $n\rightarrow \infty$. In the large $n$ limit, Theorem \ref{thm:SLLN_rows} and definitions in \eqref{mimi}-\eqref{a:va} guarantee the following  asymptotic relation
\begin{align}\label{mpel}
m^{(a)}_i = \rho^{(a)}_{i}-\rho^{(a)}_{i-1} \sim n \xi^{(a)}_i,\quad
v^{(a)}_i \sim n \varphi^{(a)}_i,\quad
E^{(a)}_i  \sim n \varepsilon^{(a)}_i
\end{align}
almost surely as $n\rightarrow \infty$, where $\xi^{(a)}_i, \varphi^{(a)}_i, \varepsilon^{(a)}_i$ are suitable positive constants. Here the symbol $\eps_{i}^{(a)}$ is taken over from (\ref{eq:thm1_SLLN}). Moreover, by (\ref{a:ma}), we have 
\begin{equation}\label{eq:nu_asymptotic}
|\mu^{(a)}| \sim n (p_{a}+p_{a+1}+\cdots+p_{\kappa}). 
\end{equation}
Then after normalizing the equations (\ref{mimi})--(\ref{Edef}) by $n$, we get 
\begin{align}
&\varphi^{(a)}_i = \delta_{a,1} - \sum_{b=1}^\kappa
C_{ab}\,\varepsilon^{(b)}_i,
\quad
\varepsilon^{(a)}_i = \sum_{j \ge 1}\min(i,j)\xi^{(a)}_j,
\label{rsm1}\\
&\varphi^{(a)}_\infty = \delta_{a,1} - \sum_{b=1}^\kappa 
C_{ab}\,\varepsilon^{(b)}_\infty,
\quad
\varepsilon^{(a)}_\infty = \sum_{j=1}^{\infty} j \xi^{(a)}_j =  p_{a}+p_{a+1}+\cdots+p_{\kappa},
\label{rsm2}
\end{align}
where (\ref{rsm1}) is due to (\ref{Edef}) and $(\ref{a:va})$, and (\ref{rsm2}) follows from (\ref{einf}), (\ref{mpel}), and (\ref{eq:nu_asymptotic}).

This allows us to define the `free energy per site', which is the limit of $(-1/n)$ times the logarithm of the summand in (\ref{gcp}) as $n\rightarrow \infty$. After applying Stirling's formula, this reads 
\begin{align}\label{Fn}
F[\xi] = \sum_{a=1}^\kappa\beta_a\sum_{i=1}^{l} i\xi^{(a)}_i 
-
\sum_{a=1}^\kappa \sum_{i=1}^{l} 
\Bigl((\xi^{(a)}_i + \varphi^{(a)}_i)\log(\xi^{(a)}_i + \varphi^{(a)}_i)
- \xi_i^{(a)} \log \xi^{(a)}_i -\varphi^{(a)}_i \log \varphi^{(a)}_i\Bigr),
\end{align}
where $\xi=(\xi^{(a)}_i)$ and we have introduced the cutoff $l$ for $i$. We will take $l\rightarrow \infty$ later.  Note that the assumption (\ref{mpel}) is  consistent with the extensivity of the free energy, which enabled us 
to remove the system size $n$ as the common overall factor. 

According to Theorem \ref{thm4:gibbs_measure_YDs}, the equilibrium is achieved at $\xi$ which minimizes $F[\xi]$. Noting that $\partial \varphi^{(b)}_{j} / \partial \xi^{(a)}_{i} = -C_{ab} \min(i,j)$, the equilibrium condition 
$\partial F[\xi] /\partial \xi^{(a)}_i = 0$ is expressed as the following TBA equation
\begin{align}
-i\beta_a + \log(1+y^{(a)}_i) = 
\sum_{b=1}^\kappa C_{ab}\sum_{j=1}^{l}\min(i,j)
\log(1+(y^{(b)}_j)^{-1})
\label{TBAn}
\end{align}
in terms of the ratio 
\begin{equation}\label{yf}
y^{(a)}_i = \varphi^{(a)}_{i}/\xi^{(a)}_{i}  \quad \forall 1 \le a \le \kappa.
\end{equation}
By taking double difference, this is shown to be equivalent to the Y-system
\begin{align}\label{YY}
\frac{(1+y^{(a)}_{i})^2}{
	(1+y^{(a)}_{i-1})(1+y^{(a)}_{i+1})}
= \prod_{b=1}^\kappa
(1+(y^{(b)}_i)^{-1})^{C_{ab}}
\end{align}
with the boundary condition
\begin{align}\label{ybd}
y^{(a)}_0 =0, \quad 1+y^{(a)}_{l+1} = e^{\beta_a}(1+y^{(a)}_l).
\end{align}
The Y-system is known to follow from the 
Q-system 
\begin{align}\label{qsys}
(Q^{(a)}_i)^2 
=Q^{(a)}_{i-1}Q^{(a)}_{i+1} + \prod_{C_{ab}=-1} Q^{(b)}_i,
\end{align}
by the substitution (see \cite[Prop. 14.1]{kuniba2011t}) 
\begin{align}\label{yqw}
y^{(a)}_i = \frac{Q^{(a)}_{i-1}Q^{(a)}_{i+1}}
{\prod_{C_{ab}=-1} Q^{(b)}_i},
\qquad 
1+y^{(a)}_i = \prod_{b=1}^\kappa (Q^{(b)}_i)^{C_{ab}},
\qquad
1+(y^{(a)}_i)^{-1}
= \frac{(Q^{(a)}_i)^2}{Q^{(a)}_{i-1}Q^{(a)}_{i+1}}.
\end{align}
The solution of the Q-system satisfying $Q^{(1)}_0=\cdots = Q^{(\kappa)}_0=1$ contains $\kappa$ parameters $z_{1},\cdots,z_{\kappa}$ and is given by the Schur function
\begin{align}\label{eq:def_Q}
Q_{i}^{(a)}(z_{1},\cdots,z_{\kappa}) = s_{(i^{a})} (w_{1},\cdots,w_{\kappa+1}), 
\end{align}
where we set 
\begin{align}\label{wza}
w_{a} = z_a/z_{a-1} \,\, \forall 1\le a \le \kappa+1,\qquad z_{0}=z_{\kappa+1}=1.
\end{align}
This is the character of the irreducible $sl_{\kappa+1}$ module 
associated with the 
Young diagram of $a \times i$ rectangular shape.
The property $Q^{(a)}_i \in \Z[z^{\pm 1}_1,\ldots, z^{\pm 1}_\kappa]$ holds.
The simplest one reads
\begin{align}\label{q11}
Q^{(1)}_1 = \sum_{a=0}^{\kappa}
\frac{z_{a+1}}{z_a}.
\end{align}
To validate the Q-system (\ref{qsys}) at $i=0$, we set 
$Q^{(a)}_{-1}=0$.

Next we take the boundary condition (\ref{ybd}) into account.
The left one $y^{(a)}_0=0$  
is automatically satisfied due to $Q^{(a)}_{-1}=0$.
On the other hand the right condition in (\ref{ybd}) is expressed as
\begin{align}\label{qoq}
e^{\beta_a} = \prod_{b=1}^\kappa\left(
\frac{Q^{(b)}_{l+1}}{Q^{(b)}_l}\right)^{C_{ab}}.
\end{align}

Suppose the parameters $z_1,\cdots, z_\kappa>0$ are chosen so that 
\begin{equation}\label{eq:parameter_assumption}
\prod_{b=1}^\kappa z_b^{C_{ab}}>1 \quad \forall 1\le a \le \kappa.
\end{equation}
Then one can invoke the result 
\cite[Th. 7.1 (C)]{hatayama248remarks}. In the present notation it says  
\begin{equation}\label{eq:Q_limit_ratio}
\lim_{l \rightarrow \infty}
(Q^{(a)}_{l+1}/Q^{(a)}_l)= z_a \qquad \forall 1\le a \le \kappa.
\end{equation}
Thus the large $l$ limit of (\ref{qoq}) can be taken, giving 
\begin{align}\label{mz}
e^{\beta_a} = \prod_{b=1}^\kappa z^{C_{ab}}_b,\qquad
z_a = \exp\left( \sum_{b=1}^\kappa (C^{-1})_{ab}\beta_b\right)
= \exp\left( \sum_{b=1}^\kappa \bigl(\min(a,b)-\frac{ab}{\kappa+1}\bigr)
\beta_b\right).
\end{align}
Noting the definitions of $\beta_{a}$ given in Theorem \ref{thm4:gibbs_measure_YDs}, one finds that the variables $z_1,\ldots, z_\kappa$ are related to the ball density as
\begin{align}\label{a:zp}
z_a = u^{-\frac{a}{\kappa+1}}p_0p_1\cdots p_{a-1},
\quad 
u= p_0p_1\cdots p_\kappa\qquad (1 \le a \le \kappa).
\end{align} 
From this it follows that the assumption we made in (\ref{eq:parameter_assumption}) is equivalent to $p_{0}>p_{1}>\cdots>p_{\kappa}$, which is our grounding assumption in this section.

In what follows, the quantities like 
$\xi^{(a)}_i, \varphi^{(a)}_i, \varepsilon^{(a)}_i$ are to be understood as 
the equilibrium values.

\subsection{Equation of state and consistency}

Here we derive the equation of state of the system and show that it is 
indeed satisfied by (\ref{a:zp}) to demonstrate the consistency of 
the analysis.
First we calculate the equilibrium value of the free energy per site (\ref{Fn}).
Using (\ref{yf}) rewrite (\ref{Fn}) as
\begin{align}\label{F1}
F[\xi_{\mathrm{eq}}] = \sum_{a=1}^\kappa
\beta_a\sum_{i=1}^l i\xi^{(a)}_i 
-\sum_{a=1}^\kappa \sum_{i=1}^l 
\Bigl(\xi^{(a)}_i\log(1+y^{(a)}_i) 
+ \varphi^{(a)}_i\log(1+(y^{(a)}_i)^{-1})\Bigr).
\end{align}
On the other hand taking the linear combination of the TBA equation as
$\sum_{a=1}^\kappa 
\sum_{i=1}^l (\ref{TBAn}) \times \xi^{(a)}_i$ we get
\begin{align}
\sum_{a=1}^\kappa\beta_a\sum_{i=1}^li\xi^{(a)}_i =
\sum_{a=1}^\kappa\sum_{i=1}^l\xi^{(a)}_i\log(1+y^{(a)}_i)
-\sum_{a,b=1}^\kappa C_{ab}\sum_{i,j=1}^l\min(i,j)
\xi^{(a)}_i\log(1+(y^{(b)}_j)^{-1}).
\end{align}
Substituting this into the first term on the RHS of (\ref{F1}) and 
using $\varphi^{(a)}_i$ from (\ref{rsm1}) we find
\begin{align}\label{Fm}
F[\xi_{\mathrm{eq}}] = -\sum_{i=1}^l
\log(1+(y^{(1)}_i)^{-1}) = 
-\log\left(\frac{Q^{(1)}_1Q^{(1)}_l}{Q^{(1)}_{l+1}}\right)
\overset{l \rightarrow \infty}{\longrightarrow}
-\log\left(z^{-1}_1Q^{(1)}_1\right),
\end{align}
where (\ref{yqw}) is used and the last step is due to (\ref{eq:Q_limit_ratio}).

Now we resort to the general relation
\begin{align}\label{FZ}
F[\xi_{\mathrm{eq}}]
= -\lim_{n \rightarrow \infty}\frac{1}{n}\log Z_n.
\end{align}
From (\ref{einf}), (\ref{gcp}) and (\ref{eq:nu_asymptotic}) 
one has the equation of state 
$p_a+p_{a+1}+\cdots+p_{\kappa} = \frac{\partial F[\xi_{\mathrm{eq}}]}{\partial \beta_a}$ for 
$1 \le a \le \kappa$.
In view of (\ref{mz}) 
it is convenient to take the linear combination of it as follows:
\begin{align}\label{est}
\sum_{b=1}^\kappa C_{ab}(p_{b}+p_{b+1}+\cdots+p_{\kappa}) = 
\left(\sum_{b=1}^\kappa C_{ab}\frac{\partial}{\partial \beta_{b}}\right)
F[\xi_{\mathrm{eq}}]
= z_a\frac{\partial F[\xi_{\mathrm{eq}}]}{\partial z_a}.
\end{align}
Substituting (\ref{Fm}) we obtain the 
equation of state: 
\begin{equation}\label{est}
z_a \frac{\partial}{\partial z_a}\log Q^{(1)}_1= 
\delta_{a,1} - \sum_{b=1}^\kappa C_{ab} (p_{b}+p_{b+1}+\cdots+p_{\kappa}).
\end{equation}
From (\ref{q11}), it reads explicitly as
$(z_0 = z_{\kappa+1}=1)$
\begin{equation}\label{est2}
z_a \frac{\partial}{\partial z_a}\log \Bigl(
\sum_{b=0}^{\kappa}
\frac{z_{b+1}}{z_b}\Bigr)= 
\delta_{a,1} - \sum_{b=1}^\kappa C_{ab}(p_b + p_{b+1} + \cdots + p_\kappa).
\end{equation}
These $\kappa$ equations relate the two sets of variables $p_1, \cdots, p_\kappa$
and $z_1, \cdots, z_\kappa$.
The former are densities of balls in the BBS while 
the latter are essentially the fugacities 
$e^{-\beta_1}, \cdots, e^{-\beta_n}$ as seen in (\ref{mz}). 
It is an elementary exercise to check that the substitution (\ref{a:zp})
satisfies (\ref{est2}) under the normalization condition 
$p_0 + p_1+ \cdots + p_\kappa=1$.
The relation (\ref{est2}) is also consistent with $e^{\beta_a}=p_{a-1}/p_a$
given in Theorem \ref{thm4:gibbs_measure_YDs}.

\subsection{Difference equation characterizing equilibrium Young diagrams}

Recall from (\ref{Edef}) and (\ref{mpel}) that
\begin{align}\label{epm}
\varepsilon^{(a)}_i &= \lim_{n \rightarrow \infty}\frac{1}{n}
(\text{$\#$ of boxes in the first $i$ rows of $\mu^{(a)}$}).
\end{align}
Therefore in order to determine the scaled Young diagrams in the
equilibrium, it suffices to characterize $\varepsilon^{(a)}_i$ 
or $\varphi^{(a)}_i$ related to it by (\ref{rsm1}) 
for $(a,i) \in [1,\kappa]\times \Z_{\ge 1}$.
This is done as follows.
Taking the double difference of the first relation in (\ref{rsm1})
leads, by using (\ref{yf}), to 
\begin{align}\label{deq}
&\varphi^{(a)}_{i-1}-2\varphi^{(a)}_i + \varphi^{(a)}_{i+1}
= \sum_{b=1}^\kappa C_{ab} (y^{(b)}_i)^{-1}\varphi^{(b)}_i
\qquad (i \ge 1, \, a \in [1,\kappa]).
\end{align}
Here we have introduced 
$\varphi^{(a)}_0=\delta_{a,1}$. 
Since (\ref{deq}) is second order as a difference equation 
with respect to $i$, we are yet to specify $\varphi^{(a)}_i$
at another $i$. 
Such a point is available at $i=\infty$ from (\ref{rsm2}). 
This completes a characterization of the scaled vacancy 
$\varphi^{(a)}_i$ hence the equilibrium Young diagrams 
$\mu^{(1)}, \cdots, \mu^{(\kappa)}$ in the scaling limit.

The procedure is summarized as follows:

\begin{enumerate}
	
	\item Given the ball densities in the region
	$1>p_0 > \cdots > p_\kappa>0$, specify 
	$z_1, \ldots, z_n$ as (\ref{a:zp}) and 
	calculate $y^{(a)}_i$ by (\ref{yqw})- (\ref{wza})  and (\ref{qf}).
	
	\item Find the unique solution to (\ref{deq}) satisfying the 
	boundary condition:
	\begin{align}\label{bcd}
	\varphi^{(a)}_0=\delta_{a,1},\qquad
	\varphi^{(a)}_\infty  = 
	\delta_{a,1}-\sum_{b=1}^\kappa C_{ab}
	(p_b + p_{b+1}+ \cdots + p_\kappa).
	\end{align}
	
	\item The quantity (\ref{epm}) is determined as  
	$\varepsilon^{(a)}_i = \sum_{b=1}^\kappa
	(C^{-1})_{ab}(\delta_{b,1}-\varphi^{(b)}_i)$.
	
\end{enumerate}

Admittedly the second step (ii) in the above is nontrivial since the 
boundary condition is imposed at the most distant two points 
$i=0$ and $i=\infty$. However, notice that according to the asymptotic equivalence (see Remark \ref{remark:asymptotic_equivalence}), the solution should be given by the stationary local energy \eqref{eq:formula_stationary_energy} obtained by the Markov chain method. In \cite{kuniba2018randomized}, this observation has been generalized for the most general BBS using the crystal base theory.

In the next subsection we present the explicit answer to the step (ii)  
for a special choice of $p_0, \ldots, p_\kappa$,
which deserves an independent demonstration in the light of 
the neat factorization not happening in the general case.
It yields an alternative proof of Corollary 
\ref{cor:principal_specialization_kappa1} 
based on the TBA analysis.

\subsection{Explicit solution under principal specialization}

Fix a parameter $0 < q <1$ and consider the 
one parameter specialization of the 
$p_0, \cdots, p_\kappa$ as 
\begin{align}
&p_a = \frac{q^{a}}{1+q+\cdots + q^\kappa} 
= \frac{q^{a}(1-q)}{1-q^{\kappa+1}}.
\end{align}
This certainly gives a probability measure on $\{0,1,\cdots,\kappa+1 \}$ and  fulfills the condition $1>p_0 > \cdots > p_\kappa>0$. Then the equation of state (\ref{est2}) is satisfied by 
\begin{align}\label{qdq}
z_a=q^{-a(\kappa+1-a)/2},
\end{align}
and such choice of $z_a$ is known as the {\em principal specialization} reducing characters to the so-called {\em $q$-dimensions}.

Under this specialization, $Q^{(a)}_i$ in \eqref{eq:def_Q} becomes 
$Q^{(a)}_i = \prod_{b=1}^a\prod_{j=1}^i
\frac{[\kappa+1+j-b]}{[a-b+i-j+1]}$, with $[x]=q^{x/2}-q^{-x/2}$.
This already brings (\ref{eq:thm1_eq1}) into the factorized form (\ref{eq:p_specialization}).
What we will do in the rest of the section is not only to 
reconfirm (\ref{eq:p_specialization}) in the current TBA framework 
but also to provide additional results which are 
explicit formulas for the scaled vacancy $\varphi^{(a)}_i$, 
the column multiplicity $\xi^{(a)}_i$ and the scaling behavior of the 
length $\lambda^{(a)}_1$ of the first column.

The quantity  $y^{(a)}_i$ (\ref{yqw})  in the principal specialization reads 
\begin{align}\label{yvv}
y^{(a)}_i = \frac{q^{-i}(1-q^i)(1-q^{i+\kappa+1})}{(1-q^a)(1-q^{\kappa+1-a})}.
\end{align}
It remains to solve (\ref{deq}) containing the above $y^{(a)}_i$'s  in the coefficients
with the boundary condition (\ref{bcd}).
The solution, established by a direct calculation, is given by
\begin{proposition}\label{pr:res}
	The following satisfies the difference equation (\ref{deq})
	and the boundary condition (\ref{bcd}):
	\begin{align}
	\varphi^{(a)}_i &=
	\frac{q^{a-1}(1-q)^2(1-q^i)(1-q^{i+\kappa+1})(1+q^{i+a})}
	{(1-q^{\kappa+1})(1-q^{i+a-1})(1-q^{i+a})(1-q^{i+a+1})}.
	\label{sigr}
	\end{align}
\end{proposition}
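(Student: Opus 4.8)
The plan is to exploit that (\ref{deq}) is a \emph{linear} second-order difference equation in the index $i$ for the unknown $\varphi^{(a)}_i$ — its right-hand side is built from $g^{(b)}_i := (y^{(b)}_i)^{-1}\varphi^{(b)}_i$, which is linear in $\varphi$ — subject to data prescribed at the two endpoints $i=0$ and $i=\infty$ in (\ref{bcd}). Such a boundary value problem has at most one solution, so it suffices to verify that the candidate (\ref{sigr}) satisfies both the boundary conditions and the equation itself. I would dispose of the boundary conditions first and then attack (\ref{deq}), which carries all the content.

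For the boundary conditions: at $i=0$ the factor $(1-q^i)$ in (\ref{sigr}) vanishes, killing $\varphi^{(a)}_0$ for every $a\ge 2$; for $a=1$ one cancels this factor against $(1-q^{i+a-1})=(1-q^i)$ in the denominator and evaluates the remainder at $i=0$ to obtain $\varphi^{(1)}_0=1$. At $i=\infty$ one simply lets $q^i\to 0$, giving $\varphi^{(a)}_\infty = q^{a-1}(1-q)^2/(1-q^{\kappa+1})$. To match the prescribed value in (\ref{bcd}) I would introduce the closed form of the partial ball densities $P_b := p_b+\cdots+p_\kappa = (q^b-q^{\kappa+1})/(1-q^{\kappa+1})$, for which the tridiagonal combination telescopes to $2P_a-P_{a-1}-P_{a+1} = -q^{a-1}(1-q)^2/(1-q^{\kappa+1})$; together with $P_0=1$ (which corrects the $a=1$ endpoint, where the $b=0$ term is absent from the Cartan sum) and $P_{\kappa+1}=0$, this reproduces $\delta_{a,1}-\sum_b C_{ab}P_b = \varphi^{(a)}_\infty$ for all $a$.

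The heart of the proof is (\ref{deq}). First I would record the explicit form
\[
g^{(a)}_i = \frac{q^{i+a-1}(1-q)^2(1-q^a)(1-q^{\kappa+1-a})(1+q^{i+a})}{(1-q^{\kappa+1})(1-q^{i+a-1})(1-q^{i+a})(1-q^{i+a+1})},
\]
obtained by multiplying (\ref{sigr}) by $(y^{(a)}_i)^{-1}$ from (\ref{yvv}). Because the factors $(1-q^a)$ and $(1-q^{\kappa+1-a})$ make $g^{(a)}_i$ vanish at $a=0$ and $a=\kappa+1$, the Cartan sum on the right of (\ref{deq}) collapses to the uniform tridiagonal expression $2g^{(a)}_i - g^{(a-1)}_i - g^{(a+1)}_i$ for every $a\in[1,\kappa]$. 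I would then rewrite (\ref{deq}) in the balanced form $A_-+A_+=2A_0$, where
\[
A_\pm := \varphi^{(a)}_{i\pm 1} + g^{(a\pm 1)}_i, \qquad A_0 := \varphi^{(a)}_i + g^{(a)}_i.
\]
The crucial structural observation is that pairing the $i$-shift of $\varphi$ with the \emph{matching} $a$-shift of $g$ makes the two summands in each $A$ share a common denominator. Writing $Z:=q^{i+a}$, a one-line expansion then shows that the combined numerator brackets factor as $(1-Z)(q^{a-1}-q^{i+\kappa})$ in $A_0$, as $(1-Zq^2)(q^{a-1}-q^{i+\kappa})$ in $A_+$, and as $(1-Z/q^2)(q^{a-1}-q^{i+\kappa})$ in $A_-$; in each case the first factor cancels the outermost denominator factor. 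Pulling out the common prefactor $(1-q)^2(q^{a-1}-q^{i+\kappa})/(1-q^{\kappa+1})$, the identity $A_-+A_+=2A_0$ reduces to the elementary rational identity
\[
\frac{1+Z/q}{(1-Z/q)(1-Z)} + \frac{1+Zq}{(1-Z)(1-Zq)} = \frac{2(1+Z)}{(1-Z/q)(1-Zq)},
\]
which on clearing denominators is just $2-2Z^2 = 2-2Z^2$.

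The main obstacle is organizational rather than deep: one must find the correct grouping before computing. Attacking (\ref{deq}) term by term over a sixfold common denominator is unpleasant, whereas pairing $\varphi^{(a)}_{i\pm1}$ with $g^{(a\pm1)}_i$ exposes the shared cancelling factor $(q^{a-1}-q^{i+\kappa})$ and collapses everything to a two-line identity. This pairing is natural in hindsight, since $g^{(a)}_i=\xi^{(a)}_i$ is the scaled column multiplicity and (\ref{deq}) is exactly the double $i$-difference of the defining relation (\ref{rsm1}); the only genuine labor is the three bracket factorizations, each an elementary expansion of a product of two binomials in $q$.
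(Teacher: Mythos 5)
Your verification is correct and is exactly the ``direct calculation'' that the paper asserts without giving details: the boundary values at $i=0$ and $i=\infty$ check out (including the telescoped value $2P_a-P_{a-1}-P_{a+1}=-q^{a-1}(1-q)^2/(1-q^{\kappa+1})$), the three bracket factorizations $(1-Z)(q^{a-1}-q^{i+\kappa})$, $(1-Zq^2)(q^{a-1}-q^{i+\kappa})$, $(1-Z/q^2)(q^{a-1}-q^{i+\kappa})$ all hold upon expansion, and the final rational identity reduces to $2-2Z^2=2-2Z^2$ as you say. Your pairing of $\varphi^{(a)}_{i\pm1}$ with $g^{(a\pm1)}_i$ over a shared denominator is a clean way to organize the computation the paper leaves implicit; the only inessential remark is the uniqueness claim for the two-point boundary value problem, which is not needed since the proposition only asserts that the formula \emph{satisfies} the equation and boundary conditions.
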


Unlike the principally specialized characters,
a conceptual explanation of this neat factorization is not known to the authors. 
As a corollary of Proposition \ref{pr:res}, one can also deduce the scaling form
$\xi^{(a)}_i$ of the column multiplicity $m^{(a)}_i$ as
\begin{align}
\xi^{(a)}_i =
\frac{q^{i+a-1}(1-q)^2(1-q^a)(1-q^{\kappa+1-a})(1+q^{i+a})}
{(1-q^{\kappa+1})(1-q^{i+a-1})(1-q^{i+a})(1-q^{i+a+1})}.
\end{align}
The factorized nature is inherited from (\ref{sigr}) via 
(\ref{yf}) and (\ref{yvv}). 

Now Corollary \ref{cor:principal_specialization_kappa1} immediately follows. 

\begin{proof}[\textbf{Proof of Corollary \ref{cor:principal_specialization_kappa1}}.]
	According to Theorem \ref{thm:SLLN_rows}, it suffices to show Corollary \ref{cor:principal_specialization_kappa1} (ii). By Theorem \ref{thm:SLLN_rows} and the relation (\ref{Edef}), we get 
	\begin{equation}
	\lim_{n\rightarrow \infty} n^{-1} \rho_{i}^{(a)}(X^{n,\mathbf{p}}) = \eps_{i}^{(a)} - \eps_{i-1}^{(a)}. 
	\end{equation}
	By using the first relation in (\ref{rsm1}), we then obtain 
	\begin{align}
	\lim_{n\rightarrow \infty} n^{-1} \rho_{i}^{(a)}(X^{n,\mathbf{p}}) &= 
	\sum_{b=1}^\kappa(C^{-1})_{ab}(\varphi^{(b)}_{i-1}-\varphi^{(b)}_i) \\
	& =\frac{q^{i+a-1}(1-q)(1-q^a)(1-q^{\kappa+1-a})} 
	{(1-q^{\kappa+1})(1-q^{i+a-1})(1-q^{i+a})},
	\label{rst}
	\end{align}
	where the last step is due to (\ref{sigr}) and 
	$(C^{-1})_{ab} = \min(a,b)-\frac{ab}{\kappa+1}$ which has already 
	appeared in (\ref{mz}).
\end{proof}	

\begin{remark}
	Observe that in the limit $q \rightarrow 1$ from below, we approach 
	the uniform distribution $p_0=\cdots = p_\kappa=\frac{1}{\kappa+1}$,
	and (\ref{rst}) tends to the rational form
	\begin{align}\label{rst2}
	\lim_{n\rightarrow \infty} n^{-1} \rho_{i}^{(a)}(X^{n,\mathbf{p}}) &= \frac{a(\kappa+1-a)}
	{(\kappa+1)(i+a-1)(i+a)}.
	\end{align}
	When $i=1$ the result (\ref{rst}) agrees with the value 
	obtained by a direct calculation of the 
	Markov process of the carries induced by interaction with 
	the random BBS states. 
	When $\kappa=a=1$, the formulas 
	(\ref{rst}) and (\ref{rst2}) reproduce the quantity 
	$\eta^{(1)}_i$ in (\ref{eq:thm1_eq3}).

	To make another observation, consider the sum
	\begin{align}
	\sum_{i \ge \lambda}\xi^{(a)}_i = 
	\frac{q^{\lambda+a-1}(1-q)(1-q^a)(1-q^{\kappa+1-a})}
	{(1-q^{\kappa+1})(1-q^{\lambda+a-1})(1-q^{\lambda+a})},
	\end{align}
	where $\lambda$ is a $\mathbb{N}$-valued parameter.
	The scaling behavior of the length 
	$\lambda_1^{(1)}, \ldots, \lambda_{1}^{(\kappa)}$ of the first columns of the equilibrium Young diagrams $\mu^{(1)}, \cdots, \mu^{(\kappa)}$  
	may be inferred by setting 
	\begin{equation}
	n \sum_{i \ge \lambda_1^{(a)}}\xi^{(a)}_i \sim 1.
	\end{equation}
	This postulate leads to a crude estimate 
	\begin{align}
	\lambda_1^{(a)} \sim 
	-\frac{1}{\log q}\log\left(
	\frac{q^{a-1}(1-q)(1-q^a)(1-q^{\kappa+1-a})n}{1-q^{\kappa+1}}\right)
	\sim -\frac{\log n}{\log q}\quad (n \rightarrow \infty).
	\end{align}
	When $\kappa=a=1$, 
	this logarithmic scaling reproduces $\mu_{n}$ in 
	\cite[Th.2 (i)]{levine2017phase} as
	$\mu_{n} = \lambda_1^{(1)}|_{q=\theta^{-1}}$.
\end{remark}

\vspace{0.3cm}

\vspace{0.3cm}
\section*{Acknowledgments}
The authors appreciate valuable conversations with Frank Aurzada, Mikhail Lifshits, Masato Okado, and Makiko Sasada. Atsuo Kuniba is supported by 
Grants-in-Aid for Scientific Research No.~18H01141 from JSPS.

\vspace{0.3cm}

\small{
\newcommand{\etalchar}[1]{$^{#1}$}
\providecommand{\bysame}{\leavevmode\hbox to3em{\hrulefill}\thinspace}
\providecommand{\MR}{\relax\ifhmode\unskip\space\fi MR }
% \MRhref is called by the amsart/book/proc definition of \MR.
\providecommand{\MRhref}[2]{%
	\href{http://www.ams.org/mathscinet-getitem?mr=#1}{#2}
}
\providecommand{\href}[2]{#2}

}


\begin{thebibliography}{FNRW18}
	
	\bibitem[CKST18]{croydon2018dynamics}
	David~A Croydon, Tsuyoshi Kato, Makiko Sasada, and Satoshi Tsujimoto,
	\emph{Dynamics of the box-ball system with random initial conditions via
		pitman's transformation}, arXiv preprint arXiv:1806.02147 (2018).
	
	\bibitem[DR00]{dedecker2000functional}
	J{\'e}r{\^o}me Dedecker and Emmanuel Rio, \emph{On the functional central limit
		theorem for stationary processes}, Annales de l'IHP Probabilit{\'e}s et
	statistiques, vol.~36, 2000, pp.~1--34.
	
	\bibitem[Dur10]{Durrett}
	Rick Durrett, \emph{Probability: theory and examples}, fourth ed., Cambridge
	Series in Statistical and Probabilistic Mathematics, Cambridge University
	Press, Cambridge, 2010.
	
	\bibitem[DZ09]{dembo2009large}
	Amir Dembo and Ofer Zeitouni, \emph{Large deviations techniques and
		applications}, vol.~38, Springer Science \& Business Media, 2009.
	
	\bibitem[Fel57]{feller1957introduction}
	William Feller, \emph{An introduction to probability theory and its
		applications, vol. i}, John Wiley \& Sons, 1957.
	
	\bibitem[Fel71]{feller1971introduction}
	William Feller, \emph{An introduction to probability and its applications, vol.
		ii}, Wiley, New York, 1971.
	
	\bibitem[FG18]{ferrari2018bbs}
	Pablo~A Ferrari and Davide Gabrielli, \emph{Bbs invariant measures with
		independent soliton components}, arXiv preprint arXiv:1812.02437 (2018).
	
	\bibitem[FK01]{fulmek2001bijective}
	Markus Fulmek and Michael Kleber, \emph{Bijective proofs for schur function
		identities which imply dodgson's condensation formula and Pl{\"u}cker
		relations}, the electronic journal of combinatorics \textbf{8} (2001), no.~1,
	16.
	
	\bibitem[FNRW18]{ferrari2018soliton}
	Pablo~A Ferrari, Chi Nguyen, Leonardo Rolla, and Minmin Wang, \emph{Soliton
		decomposition of the box-ball system}, arXiv preprint arXiv:1806.02798
	(2018).
	
	\bibitem[Ful97]{fulton1997young}
	William Fulton, \emph{Young tableaux, volume 35 of {L}ondon {M}athematical
		{S}ociety student texts}, 1997.
	
	\bibitem[FOY00]{fukuda2000energy}
	Kaori Fukuda, Masato Okado and Yasuhiko Yamada, \emph{Energy functions in box
		ball systems}, International Journal of Modern Physics A \textbf{15} (2000),
	no.~09, 1379--1392.
	
	\bibitem[HHI{\etalchar{+}}01]{hatayama2001}
	Goro Hatayama, Kazuhiro Hikami, Rei Inoue, Atsuo Kuniba, Taichiro Takagi, and
	Tetsuji Tokihiro, \emph{The {$A_{M}^{(1)}$} automata related to crystals of
		symmetric tensors}, Journal of Mathematical Physics \textbf{42} (2001),
	no.~1, 274--308.
	
	\bibitem[HKO{\etalchar{+}}]{hatayama248remarks}
	Goro Hatayama, Atsuo Kuniba, Masato Okado, Taichiro Takagi, and Yasuhiko
	Yamada, \emph{Remarks on {F}ermionic formula, {R}ecent developments in
		quantum affine algebras and related topics ({R}aleigh, {NC}, 1998),
		243--291}, Contemp. Math \textbf{248}.
	
	\bibitem[HKT01]{hatayama2001factorization}
	Goro Hatayama, Atsuo Kuniba, and Taichiro Takagi, \emph{Factorization of
		combinatorial {R} matrices and associated cellular automata}, Journal of
	Statistical Physics \textbf{102} (2001), no.~3-4, 843--863.
	
	\bibitem[Hor73]{hormander1973introduction}
	Lars Hormander, \emph{An introduction to complex analysis in several
		variables}, vol.~7, Elsevier, 1973.
	
	\bibitem[IKT12]{inoue2012integrable}
	Rei Inoue, Atsuo Kuniba, and Taichiro Takagi, \emph{Integrable structure of
		box--ball systems: crystal, {B}ethe ansatz, ultradiscretization and tropical
		geometry}, Journal of Physics A: Mathematical and Theoretical \textbf{45}
	(2012), no.~7, 073001.
	
	\bibitem[Kas91]{kashiwara1991crystal}
	Masaki Kashiwara, \emph{On crystal bases of the $ q $-analogue of universal
		enveloping algebras}, Duke Mathematical Journal \textbf{63} (1991), no.~2,
	465--516.
	
	\bibitem[KKR88]{kerov1988combinatorics}
	Sergei Kerov, Anatol Kirillov, and Nicolai Reshetikhin, \emph{Combinatorics,
		bethe ansatz, and representations of the symmetric group}, Journal of
	Mathematical Sciences \textbf{41} (1988), no.~2, 916--924.
	
	\bibitem[KLO18]{kuniba2018randomized}
	Atsuo Kuniba, Hanbaek Lyu, and Masato Okado, \emph{Randomized box--ball
		systems, limit shape of rigged configurations and thermodynamic bethe
		ansatz}, Nuclear Physics B \textbf{937} (2018), 240--271.
	
	\bibitem[KNS11]{kuniba2011t}
	Atsuo Kuniba, Tomoki Nakanishi, and Junji Suzuki, \emph{{$T$}-systems and
		{$Y$}-systems in integrable systems}, Journal of Physics A: Mathematical and
	Theoretical \textbf{44} (2011), no.~10, 103001.
	
	\bibitem[KOS{\etalchar{+}}06]{kuniba2006crystal}
	Atsuo Kuniba, Masato Okado, Reiho Sakamoto, Taichiro Takagi, and Yasuhiko
	Yamada, \emph{Crystal interpretation of {K}erov--{K}irillov--{R}eshetikhin
		bijection}, Nuclear Physics B \textbf{740} (2006), no.~3, 299--327.
	
	\bibitem[KP02]{krantz2002primer}
	Steven~G Krantz and Harold~R Parks, \emph{A primer of real analytic functions},
	Springer Science \& Business Media, 2002.
	
	\bibitem[LLP17]{levine2017phase}
	Lionel Levine, Hanbaek Lyu, and John Pike, \emph{Double jump phase transition
		in a random soliton cellular automaton}, arXiv preprint arXiv:1706.05621
	(2017).
	
	\bibitem[LPS14]{lam2014rigged}
	Thomas Lam, Pavlo Pylyavskyy, and Reiho Sakamoto, \emph{Rigged configurations
		and cylindric loop schur functions}, arXiv preprint arXiv:1410.4455 (2014).
	
	\bibitem[LS18]{lyu2018persistence}
	Hanbaek Lyu and David Sivakoff, \emph{Persistence of sums of correlated
		increments and clustering in cellular automata}, Stochastic Processes and
	their Applications (2018).
	
	\bibitem[MT12]{meyn2012markov}
	Sean Meyn and Richard Tweedie, \emph{Markov chains and stochastic stability},
	Springer Science \& Business Media, 2012.
	
	\bibitem[NY97]{nakayashiki1997kostka}
	Atsushi Nakayashiki and Yasuhiko Yamada, \emph{Kostka polynomials and energy
		functions in solvable lattice models}, Selecta Mathematica \textbf{3} (1997),
	no.~4, 547--599.
	
	\bibitem[Oka07]{okado2007part}
	Masato Okado, \emph{{$X=M$} conjecture}, Combinatorial aspect of integrable
	systems, Mathematical Society of Japan, Memoirs 17, 2007, pp.~43--73.
	
	\bibitem[Sak09]{sakamoto2009kirillov}
	Reiho Sakamoto, \emph{{K}irillov--{S}chilling--{S}himozono bijection as energy
		functions of crystals}, International Mathematics Research Notices
	\textbf{2009} (2009), no.~4, 579--614.
	
	\bibitem[Shi02]{shimozono2002affine}
	Mark Shimozono, \emph{Affine type {A} crystal structure on tensor products of
		rectangles, {D}emazure characters, and nilpotent varieties}, Journal of
	Algebraic Combinatorics \textbf{15} (2002), no.~2, 151--187.
	
	\bibitem[Tak93]{takahashi1993some}
	Daisuke Takahashi, \emph{On some soliton systems defined by using boxes and
		balls}, 1993 International Symposium on Nonlinear Theory and Its
	Applications,(Hawaii; 1993), 1993, pp.~555--558.
	
	\bibitem[TM97]{takahashi1997box}
	Daisuke Takahashi and Junta Matsukidaira, \emph{Box and ball system with a
		carrier and ultradiscrete modified {K}d{V} equation}, Journal of Physics A:
	Mathematical and General \textbf{30} (1997), no.~21, L733.
	
	\bibitem[TS90]{takahashi1990soliton}
	Daisuke Takahashi and Junkichi Satsuma, \emph{A soliton cellular automaton}, J.
	Phys. Soc. Japan \textbf{59} (1990), no.~10, 3514--3519.
	
	\bibitem[Zei83]{zeilberger1983andre}
	Doron Zeilberger, \emph{Andre's reflection proof generalized to the
		many-candidate {B}allot problem}, Discrete Mathematics \textbf{44} (1983),
	no.~3, 325--326.
	
\end{thebibliography}
\end{document}